\DeclareMathOperator{\id}{id}
\DeclareMathOperator{\Hom}{Hom}
\DeclareMathOperator{\Ext}{Ext}
\DeclareMathOperator{\tate}{\widehat{Ext}}
\DeclareMathOperator{\Sq}{Sq}
\DeclareMathOperator{\norm}{norm}
\DeclareMathOperator{\res}{res}
\newcommand{\smatrix}[1]{\left(\!\begin{smallmatrix} #1\end{smallmatrix}\!\right)}
\newtheorem{lemma}[equation]{Proposition}
\newtheorem{thm}[equation]{Theorem}
\newtheorem{cor}[equation]{Corollary} 
\theoremstyle{remark}
\newtheorem{remark}[equation]{Remark}
\newtheorem{example}[equation]{Example}
\DeclareMathOperator{\tr}{tr}
\DeclareMathOperator{\Pp}{P}
\newcommand{\A}{\mathcal{A}}
\newcommand{\B}{\mathcal{B}}
\newcommand{\C}{\mathcal{C}}
\newcommand{\E}{\mathcal{E}}
\newcommand{\F}{\mathcal{F}}
\newcommand{\Ff}{\mathbb{F}}
\newcommand{\K}{\mathcal{K}}
\newcommand{\Zz}{\mathbb{Z}}
\newcommand{\rem}[1]{}
\newcommand{\usmod}{\text{$\mathfrak{mod}$-}}
\newcommand{\stmod}{\text{$\underline{\mathfrak{mod}}$-}}
\newcommand{\ob}[1]{\smash[t]{\overline{#1}}}
\numberwithin{equation}{section}
\begin{document}
\SelectTips{cm}{} 

\title{Dyer-Lashof operations on Tate cohomology\\ of finite groups}
\author{Martin Langer}
\email{martinlanger@uni-muenster.de}
\address{Westf\"alische Wilhelms-Universit\"at M\"unster,
         Institut f\"ur Mathematik,
         Einsteinstr.~62,
         48149~M\"unster,
         Germany}

\begin{abstract}
Let $k=\Ff_p$ be the field with $p>0$ elements, and let $G$ be a finite group. By exhibiting an $E_\infty$-operad action on $\Hom(P,k)$ for a complete projective resolution $P$ of the trivial $kG$-module $k$, we obtain power operations of Dyer-Lashof type on Tate cohomology $\hat{H}^*(G; k)$. Our operations agree with the usual Steenrod operations on ordinary cohomology $H^*(G)$. We show that they are compatible (in a suitable sense) with products of groups, and (in certain cases) with the Evens norm map. These theorems provide tools for explicit computations of the operations for small groups $G$. We also show that the operations in negative degree are non-trivial.

As an application, we prove that at the prime $2$ these operations can be used to determine whether a Tate cohomology class is productive (in the sense of Carlson) or not. 
\end{abstract}
\maketitle

\section{Introduction}
Let $k = \mathbb{F}_p$ be the field with $p$ elements. For every finite group $G$, let $\hat{H}^*(G) = \hat{H}^*(G,k)$ denote the graded Tate cohomology algebra of $G$ over $k$. Then $\hat{H}^*$ is functorial with respect to injective group homomorphisms. The starting point of our discussion will be the following Theorem.
\begin{thm}\label{maintheorem}
There is a family of $k$-linear operations $Q_s$ (and $\beta Q_s$ for $p\geq 3$) for all integers~$s$ on Tate cohomology $\hat{H}^*$, satisfying the following properties.
\begin{enumerate}
 \item The operations $Q_s$ are natural with respect to injective group homomorphisms.
 \item The operation $Q_s$ lowers the degree by $2s(p-1)$ (by $s$ if $p=2$), and $\beta Q_s$ lowers the degree by $2s(p-1)-1$ for $p>2$.
 \item $Q_s(x)=0$ if $2s < -(p-1)|x|$ (if $s < -|x|$ for $p=2$).
 \item If $p>2$, then $\beta Q_s(x)=0$ if $2s\leq -(p-1)|x|$.
 \item $Q_s(x)=x^p$ if $2s = - (p-1)|x|$ (if $s = -|x|$ for $p=2$).
 \item $Q_s(1)=0$ unless $s\neq 0$, where $1\in\hat{H}^0(G)$ is the unit element.
 \item The internal Cartan formula holds:
   \begin{align*}
     Q_s(xy) &= \sum_{i+j=s} Q_i(x)Q_j(y),  \\
     \beta Q_{s}(xy) &= \sum_{i+j=s} \beta Q_{i}(x) Q_j(y) + (-1)^{|x|} Q_i(x) \beta Q_{j}(y)  & \text{for all $x,y\in\hat{H}^*(G)$.} 
   \end{align*}
 \item The Adem relations hold: For $r>ps$,
   \begin{align*}
     Q_r Q_s = \sum_i (-1)^{r+i} (pi-r,r-(p-1)s-i-1) Q_{r+s-i}Q_i 
   \end{align*}
   and for $r\geq ps$ and $p>2$
   \begin{align*}
     Q_r \beta Q_s &= \sum_i (-1)^{r+i} (pi-r,r-(p-1)s-i) \beta Q_{r+s-i} Q_i  \\ 
                   &\quad - \sum_i (-1)^{r+i}(pi-r-1,r-(p-1)s-i) Q_{r+s-i} \beta Q_i.
   \end{align*}
   Here the convention is that $(a,b)=0$ if $a$ or $b$ is negative, and $(a,b) = \binom{a+b}{b}$ otherwise.
  \item \label{maintheoremsteenrodpart} On classes of non-negative degree, the operations agree with the Steenrod operations on $H^*(BG;k)=H^*(G)$. More precisely, for $|x|\geq 0$ we have
    \begin{align*}
      Q_{-n} (x) &= \Sq^n (x) & \text{for $p=2$ and $n\geq 0$,} \\
      Q_{-n} (x) &= P^n (x), \, \beta Q_{-n} (x) = \beta P^n (x) & \text{for $p>2$ and $n\geq 0$,}  \\
      Q_{-n} (x) &= 0, \, \beta Q_{-n}(x) =0 & \text{for $n<0$.}
    \end{align*}
\end{enumerate}
\end{thm}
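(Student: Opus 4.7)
The overall approach follows May's general algebraic framework for Steenrod operations: given a suitably coherent action of an $E_\infty$-operad on a cochain complex, the Dyer-Lashof-type operations are extracted by applying the generators of $H_*(\Sigma_p;k)$ to $p$-fold products, and axioms (2)--(8) of the theorem become formal consequences of the operad axioms. The bulk of the work is therefore to produce, for a complete projective resolution $P$ of $k$ over $kG$, a suitable $E_\infty$-action on $\Hom_{kG}(P,k)$ refining the naive diagonal that already defines the Tate cup product.

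Concretely I plan to construct equivariant chain maps $\theta_n\colon W_n\otimes P^{\otimes n}\to P$, where $W_n$ is a chosen free $k\Sigma_n$-resolution of the trivial module and the collection $\{W_n\}$ assembles into an $E_\infty$-operad (e.g.\ the Barratt--Eccles operad). For ordinary projective resolutions such maps exist and are unique up to equivariant homotopy by the usual acyclic-model arguments. The obstacle in the Tate case is that $P$ is doubly infinite and $P^{\otimes n}$ is no longer a projective resolution of $k$. I expect to overcome this by realising $P$ as the splicing of an ordinary projective resolution $P_+$ of $k$ with a ``dual'' co-resolution $P_-$, so that $P^{\otimes n}$ carries a natural filtration whose layers are accessible to the standard comparison arguments, and then building $\theta_n$ inductively on filtration degree before passing to the totalization. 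Establishing existence and sufficient uniqueness of the $\theta_n$ in this doubly infinite setting is the main technical obstacle of the proof.

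With the $\theta_n$ in hand, the operations $Q_s$ and $\beta Q_s$ are defined by the customary formula: one applies a specific $\Sigma_p$-invariant cycle $e_i\in W_p$ to $x^{\otimes p}$ (where $x$ represents a Tate cohomology class) and reads off the resulting class in $\hat H^*(G)$. Naturality (1) then follows from functoriality in $G$ and compatibility of $\theta_n$ with the relevant group maps; (2) is bookkeeping in degrees. Properties (3), (4) and (6) are standard vanishing arguments, following from the fact that the relevant index in $W_p$ becomes negative in the forbidden range while $W_p$ is concentrated in non-negative homological degree. Property (5) at the boundary of the allowed range identifies $Q_s(x)$ with $x^p$ via the bottom generator of $W_p$, which represents the $p$-fold cup product. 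Finally, (7) and (8), the internal Cartan formula and the Adem relations, are derived exactly as in May's treatment by combining the $E_\infty$-structure of $\{W_n\}$ with standard calculations in $H_*(\Sigma_p;k)$ and $H_*(\Sigma_{p^2};k)$.

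For (9) the plan is to choose the complete resolution $P$ and the operad action on it so that the canonical inclusion $\Hom_{kG}(P_+,k)\hookrightarrow\Hom_{kG}(P,k)$, which realizes the map $H^*(G)\to\hat H^*(G)$, is compatible with the operad actions: the $\theta_n$ should, when restricted to $P_+^{\otimes n}$, land in $P_+\subset P$ and coincide there with a standard operad action computing the classical Steenrod operations on $H^*(G)$. This compatibility can be arranged during the inductive construction of the $\theta_n$ by first fixing the restriction to $P_+$ and only afterwards extending to $P$. The identifications $Q_{-n}=\Sq^n$, $Q_{-n}=P^n$ and $\beta Q_{-n}=\beta P^n$ on non-negative classes then follow by comparing the two definitions, while the vanishing of $Q_{-n}$ and $\beta Q_{-n}$ for $n<0$ and $|x|\geq 0$ is a special case of properties (3) and (4).
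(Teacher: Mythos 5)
Your overall framework matches the paper's — an $E_\infty$-operad acting on $\Hom_{kG}(P,k)$ for a complete resolution $P$, with properties (2)--(8) following from May's general algebraic machinery. But there is a genuine gap at the heart of your plan, precisely at the point you yourself flag as ``the main technical obstacle'': establishing ``sufficient uniqueness'' of the structure maps in the doubly infinite setting. In fact, uniqueness up to equivariant homotopy is exactly what \emph{fails}, and no filtration or inductive lifting scheme repairs it. The paper makes this explicit in its motivational discussion: if one lifts the cocommutativity witness $\Delta_1$ with $d\Delta_1=(1-T)\Delta$ in the naive way, any two choices differ by an honest chain map $P\to P^{\boxtimes 2}$ of degree $-1$, and such a map need not be null-homotopic because $\hat H^{-1}(G)\neq 0$; consequently $\mathrm{Sq}_1(\zeta)$ would only be well-defined modulo $\zeta^2\cdot\hat H^{-1}(G)$. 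The paper's fix is a rigidification that your outline never supplies: it defines the operad $\C(j)$ as the sub-dg-module of the coendomorphism operad $\Hom^*(P,P^{\boxtimes j})$ consisting, in negative internal degree, of those $f$ whose components $P_i\to P_{s_1}\otimes\cdots\otimes P_{s_j}$ vanish for all $i<0$ and all $s_\ell\geq 0$. This is a pointwise constraint that kills precisely the extra degree $-1$ chain maps responsible for the indeterminacy; acyclicity of $\C$ is then proved by comparison with an auxiliary spliced resolution, not by an inductive lift. Without an analogue of this condition, the operations you would construct depend on the choices made in the induction, and naturality and part (9) cannot even be formulated.

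Two secondary issues. First, the direction $\theta_n\colon W_n\otimes P^{\otimes n}\to P$ encodes an endomorphism-operad structure on $P$ itself, but $\Hom(-,k)$ is contravariant, so to get an $E_\infty$-algebra structure on $\Hom(P,k)$ you need the \emph{coendomorphism} direction $W_n\otimes P\to P^{\boxtimes n}$ (equivalently elements of $\Hom^*(P,P^{\boxtimes n})$), which is what the paper uses; your later remark about applying $e_i\in W_p$ to $x^{\otimes p}$ implicitly assumes the correct direction, so this is likely a slip but should be corrected. Second, for part (9) the paper does not arrange the operad action to ``restrict'' to $P_+$; it instead constructs a quasi-isomorphism of operads $\Psi\colon\C\to\A$ by conjugating with the inclusion $\iota\colon\widetilde P\hookrightarrow P$ and projection $\pi\colon P\twoheadrightarrow\widetilde P$, and the verification that $\Psi$ is a chain map (the error term $\pi f\,d(\iota)$ vanishes) again hinges on the rigidification condition. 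So the missing ingredient also blocks your plan for (9).
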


We define the total operation $Q = \sum_{i} Q_i$; then the Cartan formula reads $Q(xy) = Q(x)Q(y)$ for all $x,y$. We will sometimes use the notation $\Pp_i(x) = Q_{i-|x|}(x)$, so that $\Pp_i(x)=0$ for all $i<0$ and $\Pp_0(x)=x^p$.

\begin{example} \label{examplecyclic2}
Let $p=2$, and let $G=\Zz / 2 \Zz$ be the cyclic group of order $2$. We can easily compute all the operations on $\hat{H}^*(G)$ using the statements of the theorem only. It is known that $\hat{H}^*(G) \cong k[s^{\pm 1}]$ for the unique non-zero class $s$ of degree~$1$ (see \cite{CartEile}, XII.\S7). We know that $Q(s) = s + s^2$, so that $1 = Q(1) = Q(s^{-1}s) = Q(s^{-1}) (s+s^2)$. Using the fact that $Q(s^{-1}) = s^{-2} + (\text{terms of degree less than $-2$})$ we obtain
\[ Q(s^{-1}) = s^{-2} + s^{-3} + s^{-4} + \dots. \]
More generally we get for all integers $i$
\[ Q(s^i) = (s+s^2)^i = s^{2i} (s^{-1}+1)^i = \sum_{j\geq 0} \binom{i}{j} s^{2i-j}, \]
so that $Q_{j-i}(s^i) = \binom{i}{j} s^{2i-j}$ for all $j\geq 0$. Here we use the generalized binomial coefficient
\[ \binom{i}{j} = \frac{i(i-1)\dots (i-j+1)}{j!} \quad\text{for integers $i,j$ with $j\geq 0$.} \]
\end{example}

\begin{example}
Slightly more complicated, but still an immediate consequence of the theorem is the case $G=\Zz / p \Zz$ for odd primes $p$. Here $\hat{H}^*(G) \cong k[s^{\pm 1}] \otimes \Lambda(u)$, where $s$ is of degree~$2$ and $u$ is exterior of degree~$1$. Let us define $\beta Q = \sum_i \beta Q_i$; then from the topological fact $\beta(u)=s$ we get for integers $i$
\begin{align*}
  Q(s^i) &= \sum_{j\geq 0} \binom{i}{j} s^{pi-j}, & Q(s^i u) &= Q(s^i) u, \\
 \beta Q(s^i) &= 0, & \beta Q(s^i u) &= Q(s^i) s.
\end{align*}
\end{example}

\begin{example}
Let us do an example of a non-commutative group. Let $G=Q_8$ be the quaternion group with $8$ elements. Then it is known that $\hat{H}^*(G) \cong k[s^{\pm 1},x,y] / (x^2+xy+y^2, x^3)$ with degrees $|x|=|y|=1$ and $|s|=4$.
We immediately get $Q(x)=x+x^2$ and $Q(y)=y+y^2$. Every automorphism of $H^1(G)$ is realized by a group automorphism; this implies that $\Sq^1(s)=0$ and $\Sq^2(s)=0$. From the Adem relation $\Sq^3(s) = \Sq^1 \Sq^2(s)$ it then follows that  $Q(s) = s+s^2$. By the same methods as above, one easily deduces the operations on all of $\hat{H}^*(G)$.
\end{example}

\begin{remark}
 We will prove Theorem~\ref{maintheorem} by establishing an $E_\infty$-operad action on $\Hom_{kG}(P,k)$, the cochains of a complete projective resolution $P$ of the trivial $kG$-module $k$. There is another way of constructing Dyer-Lashof operations on Tate cohomology, using equivariant stable homotopy theory as follows. In the homotopy category of $G$-spectra \cite{LewisMaySteinbergerMcClure} let $k_G = H\Ff_p$ denote the Eilenberg-MacLane spectrum, regarded as a $G$-spectrum with 'trivial $G$-action'. Associated with $k_G$ there is a Tate spectrum $t = F(EG_+,k_G) \wedge \tilde{EG}$ (see \cite{GreenleesMayGeneralizedTateCohomology}) with the property that $\hat{H}^*(G; k) \cong [S,t]_G^*$ (see \cite{GreenleesRepresentingTatecohomology}). Then \cite{McClureEinftyRingStructuresTateSpectra} shows that $t$ is an $E_\infty'$-ring spectrum, that is, we have a non-equivariant operad acting equivariantly, which can be used to define power operations on $\hat{H}^*(G)$. The author does not know whether the operations defined in that topological manner agree with the rather algebraically defined operations of this paper. 
\end{remark}

From now on, assume that the order of $G$ is divisible by $p$. Let us define a graded submodule $M^*(G)$ of a shift of $\hat{H}^*(G)$ as follows:
\[ M^n(G) = \begin{cases} 
                  \hat{H}^{n-1}(G) & \text{if $n\leq 0$,} \\ 
                  0 & \text{otherwise.}
            \end{cases} \]
Then $M^*(G)$ inherits the Dyer-Lashof operations from $\hat{H}^*(G)$, because classes of negative degrees are mapped to classes of negative degrees (or to $0$) by the $Q_i$. Via the identification $M^*(G) \cong \bigl( \hat{H}^*(G) / H^*(G) \bigr) [1]$ we can also view $M^*(G)$ as a left $H^*(G)$-module. For finite groups $G_1$ and $G_2$, we have the K\"unneth isomorphism $H^*(G_1\times G_2)\cong H^*(G_1)\otimes H^*(G_2)$ which is known to be an isomorphism of modules over the Steenrod algebra. We also have the isomorphism $M^*(G_1\times G_2)\cong M^*(G_1)\otimes M^*(G_2)$ which is an isomorphism of modules over $H^*(G_1\times G_2)$. Even more is true:

\begin{thm} \label{producttheorem}
For finite groups $G_1$ and $G_2$, the K\"unneth isomorphism $M^*(G_1\times G_2)\cong M^*(G_1)\otimes M^*(G_2)$ is an isomorphism of modules over the Dyer-Lashof algebra. In other words, $Q(\alpha\otimes \beta) = Q(\alpha)\otimes Q(\beta)$ for all $\alpha\in M^*(G_1)$ and $\beta\in M^*(G_2)$.
\end{thm}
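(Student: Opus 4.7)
The plan is to realize the statement as an external Cartan formula for the Dyer--Lashof operations and to verify it at the chain level, in analogy with the proof of the internal Cartan formula (item~7 of Theorem~\ref{maintheorem}). The key input is that, for cochain $E_\infty$-operads, the tensor product of two $E_\infty$-algebras is naturally an $E_\infty$-algebra whose induced power operations satisfy precisely the desired formula.

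First, I would fix complete projective resolutions $P_1$ and $P_2$ of $k$ over $kG_1$ and $kG_2$, and set $A_i := \Hom_{kG_i}(P_i, k)$. The tensor product $P_1 \otimes_k P_2$ is a complex of projective $k(G_1\times G_2)$-modules, and there is a canonical isomorphism of chain complexes
\[
\Hom_{k(G_1\times G_2)}(P_1\otimes P_2, k) \;\cong\; A_1 \otimes A_2.
\]
The complex $P_1 \otimes P_2$ is not itself complete, but a chain-level comparison with a chosen complete resolution $P$ of $k$ over $k(G_1\times G_2)$ yields a map $A_1\otimes A_2 \to \Hom_{k(G_1\times G_2)}(P,k)$ which, on cohomology restricted to the range $M^*$, realizes the Künneth isomorphism. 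This is the standard Tate-cohomology Künneth formula in negative degrees, and it is precisely the range for which $M^*$ has been introduced.

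Next, I would transport the $E_\infty$-operad structure to the tensor product. Any cochain $E_\infty$-operad $\mathcal{E}$ admits an operadic coproduct $\mathcal{E}\to\mathcal{E}\otimes\mathcal{E}$ (explicit for standard models such as Barratt--Eccles or the surjection operad, and unique up to $E_\infty$-homotopy in general), so that $A_1\otimes A_2$ inherits an $\mathcal{E}$-algebra structure. A direct operadic computation---formally identical to the one producing the internal Cartan formula from an operadic coproduct---shows that the associated Dyer--Lashof operations on $H^*(A_1\otimes A_2)\cong H^*(A_1)\otimes H^*(A_2)$ satisfy the external Cartan formula $Q_s(\alpha\otimes\beta) = \sum_{i+j=s} Q_i(\alpha)\otimes Q_j(\beta)$, which is the content of the theorem once we identify cohomology with $M^*$ in the appropriate range.

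The main obstacle is the compatibility of the two $\mathcal{E}$-algebra structures on $A_1\otimes A_2$: the tensor-product structure used above, and the one pulled back from $\Hom_{k(G_1\times G_2)}(P,k)$ along the comparison map. Both compute the same Tate cohomology ring in the range of $M^*$, and standard transfer-of-structure results for $E_\infty$-algebras (exploiting contractibility of the spaces $\mathcal{E}(n)$) guarantee that the induced power operations agree. The restriction to $M^*$ is essential here: it is precisely on the stable range of negative degrees that the non-complete complex $P_1\otimes P_2$ and a complete resolution for $k(G_1\times G_2)$ compute the same cohomology, so that the operadic compatibility can be unambiguously read off at the level of cohomology.
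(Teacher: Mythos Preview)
Your overall strategy---realize the K\"unneth map as an external Cartan formula coming from an operadic diagonal---is the right idea, and it is essentially what the paper does. But the paragraph you label ``the main obstacle'' is where the real content lies, and you have not supplied it. You write that ``standard transfer-of-structure results for $E_\infty$-algebras (exploiting contractibility of the spaces $\mathcal{E}(n)$) guarantee that the induced power operations agree.'' This is the gap. Contractibility of $\mathcal{E}(n)$ ensures that the operations determined by a \emph{fixed} $\mathcal{E}$-action are well-defined; it does not ensure that two \emph{different} $E_\infty$-structures on quasi-isomorphic complexes induce the same operations. For that one needs the comparison map to be an $E_\infty$-map (or a zigzag of such), and you have not produced one. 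Indeed, the whole point of the motivational discussion in \S\ref{sectionOperad} is that in the Tate setting the naive choices are genuinely ambiguous, which is why the operad $\C$ carries its peculiar vanishing conditions. There is no a priori reason your tensor-product $\mathcal{E}$-structure on $A_1\otimes A_2$ should match, under a bare chain-level comparison map, the $\C$-structure on $\Hom_{k(G_1\times G_2)}(P,k)$.

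The paper closes this gap by changing the model. It introduces an auxiliary acyclic operad $\B$ acting on $\Hom^*(\ob{P},k)$, where $\ob{P}$ is the (shifted) negative part of $P$. Because $\ob{P}$ is bounded below, the ordinary tensor product $\ob{P}\otimes\ob{Q}$ is literally the negative part of a resolution for $G_1\times G_2$, and the operad map $\B^{G_1}\otimes\B^{G_2}\to\B^{G_1\times G_2}$ is given tautologically by tensoring morphisms; the resulting commutative square of algebra actions yields the external Cartan formula at once. The substantive step is Proposition~\ref{sigmalemma} and the lemma immediately following it, which show by an explicit chain-level argument that the $\B$-operations on $M^*(G)$ agree with the $\C$-operations inherited from $\hat H^*(G)$. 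That comparison is exactly the compatibility you invoked but did not prove.
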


\begin{example} \label{exampleC2C2}
 Let us consider the case $G=\Zz / 2 \Zz \times \Zz / 2 \Zz$ at the prime $p=2$. Let $\varphi_i$ be a generator of $M^{-i}(\Zz / 2\Zz)$; it corresponds to $s^{-i-1}\in\hat{H}^*(\Zz / 2\Zz)$, but this notation suggests the existence of an internal product which we do not have on $M^*$. Let us write $\varphi_{ij}\in\hat{H}^{-i-j-1}(G)$ for the element $\varphi_i\otimes\varphi_j\in M^*(G)$; then $\hat{H}^*(G)$ is the commutative graded algebra generated by polynomial classes $x,y$ of degree~$1$ (coming from the two factors of $G$) and the classes $\varphi_{ij}$ subject to the relations
\begin{align*}
\varphi_{ij}x &= \begin{cases} \varphi_{i-1,j} & \text{if $i\geq 1$,} \\ 0 & \text{otherwise,} \end{cases}  \\
\varphi_{ij}y &= \begin{cases} \varphi_{i,j-1} & \text{if $j\geq 1$,} \\ 0 & \text{otherwise,} \end{cases}  \\
\varphi_{ij}\varphi_{i'j'} &= 0.
\end{align*}
The total square on $\varphi_0$ is given by $Q(\varphi_0) = \varphi_1+\varphi_2+\dots$, see Example~\ref{examplecyclic2}. By the theorem, the total square on $\varphi_0\otimes\varphi_0=\varphi_{00}\in\hat{H}^*(G)$ is given by $Q(\varphi_{00}) = \sum_{i,j\geq 1} \varphi_{ij}$. More generally we get the formula
\[ Q(\varphi_{ij}) = \sum_{k,l\geq 0} \binom{k+i}{k} \binom{l+j}{j} \varphi_{2i+k+1,2j+l+1}. \]
In particular $\Pp_0=0$ and $\Pp_1(\varphi_{ij}) = \varphi_{2i+1,2j+1}$. 
\end{example}

In the same spirit we can prove:
\begin{cor}
If a group $G$ is a direct product of $r$ groups of order divisible by $p$, then $\Pp_j$ vanishes on elements of negative degree for all $j<\frac{r-1}{2}$ (for all $j<r-1$ if $p=2$).
\end{cor}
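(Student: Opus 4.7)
The plan is to iterate Theorem~\ref{producttheorem} in order to express $M^*(G)$ as an $r$-fold tensor product, reduce to simple tensors, and then combine the resulting external Cartan formula with the instability $\Pp_i=0$ for $i<0$.

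First, iterating Theorem~\ref{producttheorem} yields a K\"unneth identification
\[ M^*(G) \cong M^*(G_1)\otimes\cdots\otimes M^*(G_r) \]
as modules over the Dyer-Lashof algebra. Under the embedding $M^n(G)=\hat H^{n-1}(G)$, any class of negative $\hat H$-degree corresponds to an element of $M^*(G)$, so by linearity it is enough to check the claim on a single simple tensor $\alpha=\alpha_1\otimes\cdots\otimes\alpha_r$ with each $\alpha_k\in M^*(G_k)$.

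Next, expand the identity $Q(\alpha)=Q(\alpha_1)\otimes\cdots\otimes Q(\alpha_r)$ of Theorem~\ref{producttheorem} into graded components, tracking the K\"unneth shift of $+1$ per additional factor in $\hat H$-degree -- visible already from Example~\ref{exampleC2C2}, where $|\varphi_{ij}|_{\hat H}=-i-j-1$ while $|\varphi_i|_{\hat H}+|\varphi_j|_{\hat H}=-i-j-2$. Rewriting the identity in terms of $\Pp$ rather than $Q$ yields an external Cartan formula
\[ \Pp_j(\alpha)=\sum \Pp_{j_1}(\alpha_1)\otimes\cdots\otimes\Pp_{j_r}(\alpha_r) \]
with an index constraint on $(j_1,\dots,j_r)$. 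At the prime $2$, the definition $\Pp_i(x)=Q_{i-|x|}(x)$ has unit spacing and the shift integrates directly, so the constraint reads $j_1+\cdots+j_r=j-(r-1)$. At odd primes, the corresponding normalization of $\Pp$ (under which $\Pp_0(x)=x^p$) differs by a factor of two in its degree-scaling, so the K\"unneth shift contributes only $(r-1)/2$ to the index calculus and the constraint instead reads $j_1+\cdots+j_r=j-(r-1)/2$.

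Finally, since $\Pp_{j_k}(\alpha_k)=0$ whenever $j_k<0$, any nonzero summand above forces $j_k\geq 0$ for every $k$, and hence $j_1+\cdots+j_r\geq 0$. Inserting the constraint then gives $j\geq r-1$ at $p=2$ and $j\geq(r-1)/2$ at $p>2$, which is exactly the claimed vanishing. The delicate point of the proof is purely bookkeeping: showing that the K\"unneth shift of $+1$ per factor in $\hat H$-degree combines with the normalization of $\Pp_i$ -- in particular the factor of two at odd primes -- to produce precisely the stated index constraint, which is what degrades the bound from $r-1$ to $(r-1)/2$ when $p>2$.
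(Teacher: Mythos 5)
Your argument is correct and essentially coincides with the paper's: both reduce to a simple tensor via Theorem~\ref{producttheorem}, combine the external Cartan formula with the instability $\Pp_{j_k}=0$ for $j_k<0$, and track the K\"unneth degree shift of $r-1$. The paper phrases this as a degree estimate showing every term of $Q(a)$ has degree at most $p|a|-(p-1)(r-1)$, whereas you phrase the same bookkeeping as an index constraint on the $\Pp$-form of the external Cartan formula; your observation that the $\Pp$-normalization at odd primes carries the extra factor of two correctly accounts for the weaker bound $\tfrac{r-1}{2}$.
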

\begin{proof}
 Let $G=G_1\times\dots\times G_r$, and take an element of the form $a = a_1\otimes\dots\otimes a_r$ with $a_i\in\hat{H}^{|a_i|}(G_i)$ and $|a_i|<0$. Then $|a| = |a_1|+\dots+|a_r| + r-1$. Now $Q(a_i)$ is a sum of elements of degrees at most $p|a_i|$, and therefore $Q(a)$ can be written as sum of elements $b = b_1\otimes\dots\otimes b_r$ with $|b_i|\leq p|a_i|$, so that 
\[ |b| = |b_1| + \dots + |b_r| + (r-1) \leq p |a_1| + \dots + p|a_r| + (r-1) = p|a| - (p-1)(r-1). \]
This implies the result.
\end{proof}

\begin{remark}
 Notice that, unlike the ordinary Steenrod operations \cite{evenssteenrodtransfer}, the operations $Q_i$ are not compatible with transfers. For instance, if we embed $K=\Zz/2\Zz\times\{0\}\subset \Zz/2\Zz\times\Zz/2\Zz=V$, then the diagram
\[
\xymatrix{ \hat{H}^{-1}(K) \ar[r]^{Q_1}_{\cong} \ar[d]_{\tr_{K,V}} & \hat{H}^{-2}(K) \ar[d]_{\tr_{K,V}}^{\neq 0} \\
           \hat{H}^{-1}(V) \ar[r]^{Q_1}_{=0} & \hat{H}^{-2}(V) }
\]
cannot commute. 
\end{remark}

\subsection{Notations and conventions}
Throughout the paper, $p$ is a prime number and $k=\Ff_p$ is the prime field of characteristic $p$. Some of the results also hold for arbitrary fields of characteristic $p$, but then certain $k$-vector spaces have to be twisted by the Frobenius map. Groups labelled $G$, $K$, $L$ are assumed to be finite. All modules are right modules, unless mentioned otherwise. We will mainly work in $\usmod kG$, the category of right $kG$-modules, with its tensor product $\otimes$ and internal Hom-object $\Hom_k$ obtained from the Hopf algebra structure on $kG$. We will use several known results about projective modules without further notice (e.g., projective is the same as injective, the tensor product of a projective and an arbitrary module is projective, and arbitrary products and sums of projectives are projective). The ground field $k$ is considered as an object in $\usmod kG$ by the trivial $G$-action. We denote by $\stmod kG$ the stable module category, obtained from $\usmod kG$ by dividing out those morphisms which factor through a projective module. Homomorphisms in $\stmod kG$ between modules $X,Y$ are denoted by $\underline{\Hom}(X,Y)$. The category $\stmod kG$ is a triangulated category with shift functor $\Sigma=\Omega^{-1}$, and Tate cohomology can be defined as $\hat{H}^n(G) = \underline{\Hom}(\Omega^n k,k)$, with the composition product as multiplication. A morphism $X\rightarrow Y$ in $\usmod kG$ is called a \emph{stable equivalence} if it induces an isomorphism in the stable category. See \cite{Carlson} for an introduction to the stable module category.

Notice that, in this paper, we use the notation $\otimes$ for the internal tensor product $\otimes_k$ of $\usmod kG$, but $\Hom$ is used for the $k$-vector space of $kG$-linear maps, that is, $\Hom = \Hom_{kG}$. Furthermore, the symbol $\partial$ is used for the differential of chain complexes over $kG$, whereas $d$ often denotes the differential of cochain complexes over $k$. 

\subsection{Plan of the paper}
In \S\ref{sectionOperad}, we will construct the $E_\infty$ operad acting on $\Hom_{kG}(P,k)$ for a projective resolution $P$ of the trivial $kG$-module $k$. We also compare the Dyer-Lashof operations obtained from that action with the usual Steenrod operations that we have on $H^*(G)\cong H^*(BG)$, thereby completing the proof of Theorem~\ref{maintheorem}. In \S\ref{productsofgroups} we prove Theorem~\ref{producttheorem} about products of groups. In \S\ref{sectionNegativeExtGroups} we give a description of negative Tate Ext-groups in terms of complexes of projective modules, a tool we need for the proofs in the later sections. The duals of certain operations are shown to commute with the Evens norm map in \S\ref{sectionEvensNorm}, where we also show that our operations are non-trivial in negative degrees. Finally, in the last section we provide a criterion (Theorem~\ref{productiveTheorem}) for Tate cohomology classes to be productive in the sense of Carlson. 

\subsection{Acknowledgements}
Most of the paper evolved from parts of my PhD thesis written at the University of Bonn under supervision of Stefan Schwede. I would like to thank him for suggesting that project, and for his interest and helpful comments on this paper.
Furthermore, I would like to thank Wolfgang L\"uck for the financial support.  

\section{The operad}\label{sectionOperad}
\subsection{Resolutions}
Let $k$ be a field of characteristic $p$, and let $G$ be a finite group. Let $M$ be a $kG$-module. A \emph{complete projective resolution} of $M$ is a long exact sequence of projective $kG$-modules
\begin{align*}
 \xymatrix{ 
    \dots & P_{-2} \ar[l] & P_{-1} \ar[l] & & P_0 \ar[ll] \ar[dl]^{\varepsilon} & P_1 \ar[l]^-{\partial_1} & P_2 \ar[l] & \dots \ar[l] \\
      & & & M \ar[ul] } 
\end{align*}
such that $\varepsilon$ is the cokernel map of $\partial_1$. The map $\varepsilon$ is called \emph{augmentation} and can be viewed as a chain map $\varepsilon: P \rightarrow M$, where $M$ is regarded as a complex concentrated in degree~$0$. If $N$ is another module, then a map $\varepsilon':P_0\rightarrow N$ (or, equivalently, a chain map $\varepsilon':P\rightarrow N$) will be called \emph{quasi-augmentation} if there is a stable equivalence $f:M\rightarrow N$ such that $f\circ\varepsilon = \varepsilon'$. 

There is a dual notion using injectives. A \emph{complete injective resolution} of $M$ is a long exact sequence of injective $kG$-modules
\begin{align*}
 \xymatrix{ 
    \dots & I_{-2} \ar[l] & I_{-1} \ar[l] & I_{0} \ar[l]^-{\partial_0} & & I_{1} \ar[ll] \ar[dl] & I_{2} \ar[l] & \dots \ar[l] \\
    &  & & & M \ar[ul]^{\eta} } 
\end{align*}
in which $\eta$ is the inclusion of the kernel of $\partial_0$. The map $\eta$ is called \emph{coaugmentation} and can be viewed as a chain map $\eta:M\rightarrow I$. If $N$ is another module, then a map $\eta':N\rightarrow I$ will be called \emph{quasi-coaugmentation} if there is a stable equivalence $f:N\rightarrow M$ with $\eta\circ f = \eta'$. 

Since projectives are the same as injectives, the notions of complete resolutions only differ in the position of the resolved module $M$. If $P$ is a complete projective resolution of the trivial module $k$, and $N$ is another $kG$-module, then the cohomology groups of the complex $\Hom_{kG}(P,N)$ define the Tate cohomology of $G$ with coefficients in $N$, that is, $\hat{H}^n(G; N) \cong H^n \Hom_{kG}(P,N)$.

\begin{lemma}
Let $P$ and $Q$ be complete projective resolutions, and let $\epsilon:P\rightarrow M$ be a quasi-augmentation. If for some chain transformation $f:Q\rightarrow P$ the composite $Q_0\xrightarrow{f_0} P_0\xrightarrow{\epsilon} M$ is zero, then $f$ is null-homotopic. The corresponding statement holds for injective resolutions.
\end{lemma}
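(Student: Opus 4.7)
My plan is to reduce the lemma to the case where $\epsilon$ is a true augmentation and then build a chain null-homotopy by the standard two-sided induction, using projectivity of the $Q_n$ in non-negative degrees and injectivity of the $P_n$ in negative degrees.

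First I unwind the quasi-augmentation: write $\epsilon = g\circ\tilde\epsilon$, where $\tilde\epsilon:P\to M'$ is the true augmentation (so $\ker\tilde\epsilon = \im\partial_1^P$) and $g:M'\to M$ is a stable equivalence. Since $\tilde\epsilon\,\partial_1^P=0$, the composite $\tilde\epsilon f_0$ kills $\im\partial_1^Q$ and factors as $\bar f\circ\tilde\eta$ for a unique $\bar f:A\to M'$, where $\tilde\eta:Q_0\to A$ is the augmentation of $Q$. The hypothesis $\epsilon f_0 =0$, combined with surjectivity of $\tilde\eta$, gives $g\bar f = 0$. Since $g$ is an isomorphism in $\stmod kG$ it is stably monic, so $\bar f = 0$ in $\stmod kG$; equivalently $\bar f = \beta\circ\alpha$ for some $\alpha:A\to R$ and $\beta:R\to M'$ with $R$ projective-injective.

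Next I modify $f$ within its chain-homotopy class so that the new map $f'$ satisfies $\tilde\epsilon f'_0=0$. The exactness of $Q$ identifies $A$ with $\ker\partial_{-1}^Q\subseteq Q_{-1}$ and factors $\partial_0^Q$ as an embedding $\iota:A\hookrightarrow Q_{-1}$ composed with $\tilde\eta$. Using injectivity of $R$ I extend $\alpha$ along $\iota$ to $\tilde\alpha:Q_{-1}\to R$; using projectivity of $R$ and surjectivity of $\tilde\epsilon$ I lift $\beta$ to $\tilde\beta:R\to P_0$. Setting $h_{-1} := \tilde\beta\tilde\alpha$ and $h_n=0$ otherwise, $f' := f-(\partial h+h\partial)$ is chain-homotopic to $f$, and by construction $\tilde\epsilon f'_0 = \tilde\epsilon f_0 - \bar f\,\tilde\eta = 0$.

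For the now-reduced problem I construct a chain null-homotopy $\{k_n\}$ of $f'$. Setting $k_{-1}=0$, the fact that $\im f'_0\subseteq \ker\tilde\epsilon = \im\partial_1^P$ together with projectivity of $Q_0$ furnishes a lift $k_0:Q_0\to P_1$ with $\partial_1^P k_0 = f'_0$. Upward for $n\geq 1$ I lift $f'_n - k_{n-1}\partial_n^Q$ through $\partial_{n+1}^P$ using projectivity of $Q_n$; downward for $n\leq -2$ I extend the map $f'_n - \partial_{n+1}^P k_n$, which vanishes on $\ker\partial_n^Q$, along $\im\partial_n^Q\hookrightarrow Q_{n-1}$ to $k_{n-1}:Q_{n-1}\to P_n$ using injectivity of $P_n$. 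At each step the vanishing condition needed to start the next is a formal consequence of the chain-map identity and the previous step.

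The main obstacle is the reduction step, where one must convert the categorical fact that $g\bar f = 0$ with $g$ a stable equivalence into an explicit chain-level cobounding witness that cancels the obstruction $\tilde\epsilon f_0$. Once the modified map $f'$ is in place the two-sided induction is routine, and the dual statement for injective resolutions follows by the same argument with arrows reversed.
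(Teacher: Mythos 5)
The paper omits its own proof of this Proposition, so there is nothing to compare against directly; your argument is therefore assessed on its own terms, and it is correct. The reduction to a true augmentation is the one real step that needs thought: you correctly observe that $\epsilon f_0=0$ only gives $\tilde\epsilon f_0=0$ \emph{stably}, and you convert the resulting factorization $\bar f=\beta\alpha$ through a projective-injective $R$ into an explicit degree-$1$ map $h_{-1}=\tilde\beta\tilde\alpha$ whose coboundary cancels the obstruction; the verification $\tilde\epsilon f'_0=\tilde\epsilon f_0-\beta\alpha\tilde\eta=0$ goes through because $\partial_0^Q=\iota\tilde\eta$ and $\tilde\alpha\iota=\alpha$, $\tilde\epsilon\tilde\beta=\beta$. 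The two-sided induction is then standard, with the needed vanishing $\partial_0^P f'_0=0$ following from $\ker\tilde\epsilon=\ker\partial_0^P$ by exactness of $P$ at $P_0$.

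One small indexing slip: the downward induction must run for $n\leq -1$, not $n\leq -2$. With $k_{-1}=0$ and $k_0$ in hand, the step at $n=-1$ is what produces $k_{-2}$ from $k_{-2}\partial_{-1}^Q=f'_{-1}-\partial_0^P k_{-1}=f'_{-1}$, using that $f'_{-1}\partial_0^Q=\partial_0^P f'_0=0$; as written your range omits this step and never constructs $k_{-2}$. Also worth stating explicitly (you do implicitly) that $kG$ is quasi-Frobenius, so $R$ and each $P_n$ are injective as well as projective, which is what licenses both the extension of $\alpha$ along $\iota$ and the downward extensions along $\im\partial_n^Q\hookrightarrow Q_{n-1}$.
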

We omit the straightforward proof.

Whenever $C$ is a cochain complex of $kG$-modules, we define the dual complex $C^\vee$ as $(C^\vee)_n = \Hom(C_{-n},k)$ with the induced differentials. If $P$ is a complete projective resolution of $M$ with (quasi-)augmentation $\epsilon$, then $P^\vee$ is a complete injective resolution of $M^\vee$ with (quasi)-coaugmentation $\epsilon^\vee$, and the same is true with the roles of projective and injective interchanged.

Let $k$ be the trivial $kG$-module, and choose complete injective resolutions $I$ and $I'$ of $k$ with coaugmentations $\eta,\eta'$. The tensor product $I\otimes I'$ is defined to be the complex with modules $(I\otimes I')_n = \bigoplus_{i+j=n} I_i\otimes I'_j$ and differential $\partial_{I\otimes I'} = \partial_I \otimes \id + \id \otimes \partial_{I'}$ (note here that evaluation of the differential involves the usual sign, i.e., $(\id\otimes\partial) (x\otimes y) = (-1)^{|x|} x\otimes \partial y$). It is known that the tensor product $I\otimes I'$ is a complete injective resolution of $k$ with quasi-coaugmentation $\eta\otimes \eta'$ (see \cite{KrauseNoetherianScheme}, \S8). 

Now let $P$ and $P'$ be complete projective resolutions of $k$, and assume that all modules $P_i,P'_i$ are finitely generated. Let us define a new tensor product
$P \boxtimes P' = (P^\vee \otimes P'^\vee)^\vee$; more explicitly, $(P \boxtimes P')_n = \prod_{i+j=n} P_i \otimes P'_j$. By the considerations above, this is a complete projective resolution of $k$ with quasi-augmentation $\varepsilon\boxtimes\varepsilon': P\boxtimes P' \rightarrow k$. These definitions and observations can be generalized to $\boxtimes$-products of finitely many complete projective resolutions.

\begin{remark}\label{deltaproduct}
 The $\boxtimes$-product can be used to define the multiplication on Tate cohomology. By usual homological algebra, the identity map on $k$ can be lifted to a commutative diagram as follows:
\[ \xymatrix{ P \ar[r]^-{\Delta} \ar[d]_{\varepsilon} & P \boxtimes P \ar[d]^{\varepsilon\boxtimes \varepsilon} \\  k \ar@{=}[r] & k } \]
Such a lift is unique up to homotopy. A more explicit construction of $\Delta$ is given in the proof of Theorem 4.1 in \cite{CartEile}, XII, where it is also shown that $\Delta$ induces the Tate cohomology product in the following way: given cycles $f,g\in\Hom^*_{kG}(P,k)$ we get a cycle $(f\boxtimes g)\circ \Delta\in\Hom^*_{kG}(P,k)$ representing $[f]\cdot [g]$. 
\end{remark}

\subsection{Motivation for the definition of the operad}
Let $P$ be a complete projective resolution of $k$ by finitely generated $kG$-modules. Before we start with the actual construction of an $E_\infty$-structure on $\Hom^*_{kG}(P,k)$, let us point out a major issue in the construction of power operations which does not turn up in the case of ordinary cohomology $H^*(G)$. For simplicity, let us assume that $p=2$ throughout this motivational part. Let us naively transfer to Tate cohomology the construction of $\Sq_1$ as it is done in ordinary cohomology. We know that the identity map of $k$ can be lifted to a map $\Delta:P\rightarrow P^{\boxtimes 2}$ as in Remark~\ref{deltaproduct}, and any two such liftings are homotopic. Therefore, if $T$ denotes the twist map of $P^{\boxtimes 2}$, then we know that $(1-T)\circ \Delta$ is the boundary of some map $\Delta_1:P\rightarrow P^{\boxtimes 2}$ of degree~$-1$. If $\zeta:P\rightarrow k$ is a chain map of degree~$n$ representing some cohomology class $[\zeta]\in\hat{H}^n(G)$, then we know that $\zeta^{\boxtimes 2}\circ \Delta_1$ is a chain map of degree~$2n$, and we could define $\Sq_1(\zeta)$ to be the class represented by that map. The problem is here that there is an ambiguity in the choice of the map $\Delta_1$, and any two such choices differ by a chain map $P\rightarrow P^{\boxtimes 2}$ of degree~$-1$. Therefore, $\Sq_1(\zeta)$ is only well-defined up to some element in $\zeta^2\cdot \hat{H}^{-1}(G)$. This problem does not occur in ordinary cohomology simply because $H^{-1}(G)$ is zero. We therefore have to rigidify our choice of $\Delta_1$ in order to get actual operations. To do so, observe that a chain map $P\rightarrow P^{\boxtimes 2}$ of degree~$-1$ certainly represents the zero class if the composite $P_{-1}\rightarrow (P^{\boxtimes 2})_0 \twoheadrightarrow P_0\otimes P_0\rightarrow k$ vanishes, so that one possibility is to require the map $P_{-1}\rightarrow P_0\otimes P_0$ to be zero. The next step is to elaborate this idea, and because we want an $E_\infty$-structure, we need to do so in an 'operadic' way. 

\subsection{Definition of the operad}\label{tateoperaddefinition}
As before, let $P$ be a complete projective resolution of $k$ by finitely generated $kG$-modules. We are now going to define an acyclic operad which acts on $\Hom^*_{kG}(P,k)$. To do so, we will work in the category of (increasing degree) differential graded modules over $k$ (or, equivalently, the category of cochain complexes of $k$-vector spaces) with its symmetric monoidal tensor product $\otimes$. Recall that if $X$ and $Y$ are chain complexes of $kG$-modules with differential $\partial$, then we get such a differential graded module $\Hom^*(X,Y)$ by defining
\[ \Hom^n(X,Y) = \prod_{j\in\Zz} \Hom_{kG}(X_{n+j},Y_j) \]
with differential $d(f) = \partial f - (-1)^n f\partial$. 

Let us recall some basics about operads; see, e.g., \cite{OperadsAlgebrasModulesMotives} for an introduction. A symmetric operad $\C$ is given by a differential graded module $\C(j)$ for every integer $j\geq 0$ together with a $\Sigma_j$-action, equivariant structure maps
\[ \C(j)\otimes \C(i_1)\otimes\dots\C(i_j) \rightarrow \C(i_1+\dots+i_j) \]
for all $j,i_1,\dots,i_j$, and a unit map $k\rightarrow \C(j)$ for each $j$; all these maps have to satisfy certain coherence diagrams. 
A typical example of such an operad is the so-called \emph{coendomorphism-operad} $\F(j) = \Hom^*(P,P^{\boxtimes j})$ for $j\geq 0$, whose structure maps are given by
\begin{align*}
  \Hom(P,P^{\boxtimes j}) \otimes \Hom(P,P^{\boxtimes i_1}) \otimes \dots \otimes \Hom(P,P^{\boxtimes i_j}) &\rightarrow \Hom(P,P^{\boxtimes (i_1+\dots+i_j)}) \\
    g \otimes f_1 \otimes \dots \otimes f_j &\mapsto (f_1\boxtimes \dots \boxtimes f_j)\bullet g.
\end{align*}
We have written $\bullet$ here because we want to stress that the Koszul sign rule also applies to this situation; whenever $a$ and $b$ are composable maps of certain degrees, we write $a\bullet b$ for $(-1)^{|a||b|} \cdot a\circ b$, so that expressions like $b\otimes a\mapsto a\bullet b$ indeed yield maps of chain complexes. The symmetric group $\Sigma_j$ acts on $P^{\boxtimes j}$ by permutation of the factors (note that this also involves the usual signs), and we therefore get an action of $\Sigma_j$ on $\Hom(P,P^{\boxtimes j})$. The unit map $k\rightarrow \Hom(P,P)$ is given by the identity of $P$. The operad we are up to will be a sub-operad of the coendomorphism-operad $\F$. 

An operad $\C$ is called \emph{unital} if $\C(0)=k$. In that case, the $\C(j)$ have augmentations coming from the operad structure maps
\[ \C(j) \cong \C(j)\otimes \C(0)^j \rightarrow \C(0) = k. \]
The operad is called \emph{acyclic} if the augmentations are quasi-isomorphisms of chain complexes. An operad $\C$ is called an \emph{$E_\infty$-operad} if it is acyclic and for every $j$, $\C(j)$ is free as a $k\Sigma_j$-module. A differential graded module $A$ is called a \emph{$\C$-algebra} if there are structure maps
\[ \C(j) \otimes A^j \rightarrow A \]
for every $j\geq 0$ which are associative, unital and equivariant (see \cite{OperadsAlgebrasModulesMotives}, \S2 for details). Our goal is to define an acyclic operad $\C$ (and later an $E_\infty$-operad) and a $\C$-algebra structure on $A=\Hom^*(P,k)$. This structure can then be used to define the operations $Q_i$ on $H^*A\cong\hat{H}^*(G)$, and also for proving most of Theorem~\ref{maintheorem}. 

Let us begin with the definition of $\C$. For every non-negative integer $j$, we define a differential graded submodule $\C(j)$ of $\Hom^*(P,P^{\boxtimes j})$ as follows:
\begin{align*}
  \C(j)^m &= 0  & \text{for $m>0$,}  \\
  \C(j)^0 &= \{ f\in\Hom^0(P,P^{\boxtimes j}) \,\mid\, df = 0 \} \\
  \C(j)^m &= \left\{ f\in\Hom^m(P,P^{\boxtimes j}) \,\biggl| \, \begin{array}{l} \text{$P_i \xrightarrow{\text{proj}\circ f} P_{s_1}\otimes P_{s_2}\otimes\dots\otimes P_{s_j}$ vanishes} \\ \text{for all $i<0$ and all $s_1,\dots,s_j\geq 0$} \end{array} \right\}  & \text{for $m<0$.}
\end{align*}
In order to check that $\C(j)$ is indeed a differential graded submodule, we have to prove $d\C(j)^m \subseteq \C(j)^{m+1}$. This is clear for $m\geq -1$, and in case $m<-1$, the map $P_i \xrightarrow{\text{proj}\circ df} P_{s_1}\otimes\dots\otimes P_{s_j}$ is the sum of $P_i\xrightarrow{\partial} P_{i-1}\xrightarrow{\text{proj}\circ f} P_{s_1}\otimes\dots\otimes P_{s_j}$ and maps $P_i\xrightarrow{\text{proj}\circ f} P_{s_1}\otimes\dots\otimes P_{s_t+1}\otimes\dots\otimes P_{s_j} \xrightarrow{\id\otimes\partial\otimes\id} P_{s_1}\otimes\dots\otimes P_{s_j}$, all of which are zero by assumption.

Next we show that $\C$ is a sub-operad of the co-endomorphism operad $\F$. In order to do so, we only need to show that it is closed under the structure maps, the $\Sigma$-action, and the unit. The latter two are immediate consequences of the definition, so let us take $g\in C(j)$, $f_i\in C(j_i)$ for $i=1,\dots,j$ and prove that $(f_1\boxtimes\dots\boxtimes f_j)\bullet g\in \C(j_1+\dots+j_j)$. If one of the chosen elements is of positive degree, then the composition is zero. If all the chosen elements are of degree zero, then they are chain transformations and so is the composition. Now we can assume that the composition is of negative degree, and we have to show that the composite
\[ P_i \xrightarrow{g} P_{s_1}\otimes \dots \otimes P_{s_j} \xrightarrow{ f_1\boxtimes \dots\boxtimes f_j } P_{t_{1,1}}\otimes\dots\otimes P_{t_{1,j_1}}\otimes \dots \otimes P_{t_{j,1}}\otimes\dots\otimes P_{t_{j,j_j}} \]
is zero for all $i<0$ and $t_{l,n}\geq 0$. If $s_l$ is negative, then $P_{s_l} \xrightarrow{ f_l } P_{t_{l,1}}\otimes\dots\otimes P_{t_{l,j_l}}$ vanishes and so does the composition. But if all the $s_l$'s are non-negative, then $g$ is zero, so we are done.

The operad $\C$ is unital, that is, $\C(0)$ is isomorphic to $k$ concentrated in degree $0$. Here we use the convention $P^{\boxtimes 0}=k$; then $\C(0)^m=0$ unless $m=0$, in which case
\[ \C(0)^0 = \{ f \in \Hom^0(P,k) \, \mid \, df = 0 \} \cong k \left< \varepsilon \right>. \]
So we get augmentations $\C(j) \cong \C(j)\otimes \C(0)^j \rightarrow \C(0)\cong k$ given by postcomposition with $\varepsilon^{\boxtimes j}$.

\subsection{Acyclicity of the operad}
We are now going to show that the augmentations $\C(j)\rightarrow k$ are quasi-isomorphisms. 

To do so, let us consider another complete projective resolution $Q$ of $k$, constructed as follows. Let us define $P^+$ to be the non-negative part of $P$, that is $P^+_n = P_n$ for $n\geq 0$ with the induced differentials. Then $k\xleftarrow{\varepsilon} P^+$ is an acyclic augmented complex, and by the K\"unneth theorem $k\xleftarrow{\varepsilon^{\otimes j}} (P^+)^{\otimes j}$ is also acyclic. Next, we define a complex $\dots \leftarrow R_{-2}\leftarrow R_{-1}\leftarrow R_0$ by setting $R_n = P_n$ for $n<0$ and $R_0 = k$, the differential $R_{-1}\leftarrow R_{0}$ being the coaugmentation of $P$. Then $R$ is acyclic, and by the K\"unneth theorem $R^{\otimes j}$ is also acyclic. Note that $(R^{\otimes j})_0=k$, so we can splice the complexes $R^{\otimes j}$ and $k\leftarrow (P^+)^{\otimes j}$ to get a complex $Q$, which then is a complete projective resolution of $k$. There is a chain map $\Phi:P^{\boxtimes j}\rightarrow Q$ which in non-negative degrees is given by projections, and in negative degrees the maps
\[ P_{s_1} \otimes P_{s_2} \otimes \dots \otimes P_{s_j} \rightarrow R_{s_1}\otimes R_{s_2} \otimes \dots \otimes R_{s_j} \]
are zero unless all the $s_i$'s are non-positive, in which case the map is the tensor product of identity maps and the augmentation $\varepsilon:P_0\rightarrow R_0=k$.

Since the composition $P^{\boxtimes j} \xrightarrow{\Phi} Q \xrightarrow{\varepsilon} k$ equals the quasi-augmentation $\varepsilon^{\boxtimes j}$, we get that $\Phi$ is a chain homotopy equivalence. Therefore, the induced map $\lambda:\Hom^*(P,P^{\boxtimes j}) \rightarrow \Hom^*(P,Q)$ is a quasi-isomorphism. Moreover, $\lambda$ is surjective because $\Phi$ is levelwise onto.

\begin{lemma} Suppose that $\lambda:A\rightarrow B$ is a surjective quasi-isomorphism of differential graded modules, and let $C\subseteq B$ be a differential graded submodule of $B$. Then the restriction $\lambda:\lambda^{-1}(C) \rightarrow C$ is a quasi-isomorphism as well.
\end{lemma}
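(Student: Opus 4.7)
The plan is to exploit the standard fact that a surjective quasi-isomorphism has acyclic kernel, then cut down to the preimage $\lambda^{-1}(C)$ and invoke the long exact cohomology sequence.

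First I would let $K = \ker(\lambda)$ and consider the short exact sequence of differential graded modules
\[ 0 \rightarrow K \rightarrow A \xrightarrow{\lambda} B \rightarrow 0. \]
The associated long exact sequence in cohomology, together with the hypothesis that $\lambda$ is a quasi-isomorphism, forces $K$ to be acyclic.

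Next I would verify that $\lambda^{-1}(C)$ is a differential graded submodule of $A$ (this follows from $C$ being closed under the differential of $B$) and observe two easy facts: $K \subseteq \lambda^{-1}(C)$, since $\lambda(K) = 0 \in C$; and the restriction $\lambda : \lambda^{-1}(C) \rightarrow C$ is surjective, because given $c \in C$, any lift $a \in A$ provided by surjectivity of $\lambda$ automatically lies in $\lambda^{-1}(C)$. Hence we obtain a short exact sequence
\[ 0 \rightarrow K \rightarrow \lambda^{-1}(C) \xrightarrow{\lambda} C \rightarrow 0. \]
The long exact sequence in cohomology, combined with the acyclicity of $K$ established in the previous step, immediately yields that $\lambda : \lambda^{-1}(C) \rightarrow C$ induces an isomorphism on cohomology.

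There is no real obstacle here; the argument is a direct diagram chase with the long exact sequence. The only minor check worth flagging is that $\lambda^{-1}(C)$ inherits a differential, which is automatic from $C \subseteq B$ being a subcomplex.
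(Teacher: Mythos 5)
Your proof is correct and follows exactly the same route as the paper's: identify $K=\ker\lambda$, use the long exact sequence to see $K$ is acyclic, observe $K$ is also the kernel of the restriction to $\lambda^{-1}(C)$, and apply the long exact sequence again. The paper's version is just more terse; your extra checks (surjectivity of the restriction, $\lambda^{-1}(C)$ being a subcomplex) are implicit there.
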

\begin{proof}
Let us denote by $K$ the kernel of $\lambda$. Since $\lambda$ is a quasi-isomorphism, the long exact sequence in homology implies that $H^*(K)=0$. Since $K$ is also the kernel of $\lambda\mid_{\lambda^{-1}(C)}$, using the long exact sequence in homology again we get that the restriction of $\lambda$ is a quasi-isomorphism.
\end{proof}

Now the idea is to choose a dg submodule $C$ of $\Hom^*(P,Q)$ quasi-isomorphic to $k$, and such that $\lambda^{-1}(C)$ is (close to) our $\C(j)$. Define:
\begin{align*}
 C^m &= 0 & \text{for $m>0$, } \\
 C^0 &= \{ f\in \Hom^0(P,Q) \,|\, df = 0 \} \\
 C^m &= \{ f\in \Hom^m(P,Q) \,|\, \text{$P_i \xrightarrow{f} Q_j$ is zero for all $i<0\leq j$} \} & \text{for $m<0$.}
\end{align*}
Then $C$ is indeed a dg submodule of $\Hom^*(P,Q)$.

\begin{lemma}\label{cohomologyofC}
 We have $H^*(C)\cong k$.
\end{lemma}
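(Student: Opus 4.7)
The plan is to decompose $C$ into two pieces controlled by the sign of the position index $j$ in $\Hom^m(P,Q) = \prod_j \Hom_{kG}(P_{m+j},Q_j)$, compute each cohomology separately, and then resolve a cross-condition in degree zero. For $m \leq 0$, set
\[ A^{++}_m := \prod_{\substack{j \geq 0 \\ m+j \geq 0}} \Hom_{kG}(P_{m+j},Q_j), \qquad A^{--}_m := \prod_{\substack{j < 0 \\ m+j < 0}} \Hom_{kG}(P_{m+j},Q_j). \]
The remaining ``mixed'' pieces are empty for $m \leq 0$: $A^{+-}_m$ for index-counting reasons, and $A^{-+}_m$ because it is precisely the piece forced to vanish by the defining condition of $C$. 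Hence $\Hom^0(P,Q) = A^{++}_0 \oplus A^{--}_0$, and $C^m = A^{++}_m \oplus A^{--}_m$ for $m < 0$. A direct calculation using $(df)_j = \partial_Q f_{j+1} - (-1)^m f_j \partial_P$ then shows that $d$ preserves this decomposition on $C^{\leq -1}$; the only candidate ``leak'' is the component $\partial_Q f^{++}_0 \colon P_0 \to Q_{-1}$, which lives in $A^{+-}_1$ and can only be produced by elements of $A^{++}_0$, so it appears only at the boundary degree $m = 0$.

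I next identify the cohomology of each summand. The complex $A^{++}$ is precisely $\Hom^{\leq 0}(P^+,Q^+)$; since $Q^+ = (P^+)^{\otimes j}$ is by construction a projective resolution of $k$, we obtain $H^*(A^{++}) \cong \Ext^*_{kG}(k,k) = H^*(G)$ in non-positive degrees, contributing $k$ in degree zero and zero in negative degrees. For $A^{--}$, reindex via $\widetilde{P}^a := P_{-a-1}$ and $\widetilde{R}^a := R^{\otimes j}_{-a-1}$. The coaugmentations $k \hookrightarrow P_{-1}$ and $k \hookrightarrow R^{\otimes j}_{-1}$ realize $\widetilde{P}$ and $\widetilde{R}$ as injective coresolutions of $k$, and one checks that $A^{--} \cong \Hom^*(\widetilde{P},\widetilde{R})$ as cochain complexes. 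Because $\widetilde{R}$ is a bounded-below complex of injectives, precomposition with the quasi-isomorphism $k \hookrightarrow \widetilde{P}$ gives a quasi-isomorphism $\Hom^*(\widetilde{P},\widetilde{R}) \to \Hom^*(k,\widetilde{R})$, whose cohomology is again $\Ext^*_{kG}(k,k)$; restricted to non-positive degrees, this gives $k$ in degree zero and zero in negative degrees.

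Taken together these two computations would suggest $H^0(C) \cong k \oplus k$, and the last step is to account for the cross-condition coming from the $A^{+-}_1$-component of $df$ in $C^0$. For $f = (f^{++},f^{--}) \in \Hom^0(P,Q)$ the condition $df = 0$ is the conjunction of the two separate chain-map conditions on $f^{++}$ and $f^{--}$, together with the mixed relation
\[ \partial_Q f^{++}_0 = f^{--}_{-1}\partial_P \qquad \text{as maps } P_0 \to R^{\otimes j}_{-1}. \]
The splicing differential $Q_0 = P_0^{\otimes j} \to Q_{-1}$ factors through $k$ via $\varepsilon_P^{\otimes j}$, and $\partial_P \colon P_0 \to P_{-1}$ factors as $\operatorname{coaug}_P \circ \varepsilon_P$; so both sides of the mixed relation factor through $P_0 \xrightarrow{\varepsilon_P} k \to R^{\otimes j}_{-1}$. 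Writing $\lambda, \mu \in k$ for the scalars by which $f^{++}$ and $f^{--}$ act on $H_0(P^+) = k$ and $H_{-1}(P_{<0}) = k$ respectively, the mixed relation collapses to $\lambda = \mu$. Coboundaries from $C^{-1}$ respect the decomposition and leave both $\lambda$ and $\mu$ unchanged, so $H^0(C) \cong \{(\lambda,\mu) : \lambda = \mu\} \cong k$. The main obstacle is this degree-zero bookkeeping: one must exploit the specific way in which the splicing of $Q$ matches the factorization of $\partial_P$ through the coaugmentation of $P$.
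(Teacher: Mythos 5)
Your proof is correct in outline but takes a genuinely different route from the paper's. For $m<0$ the paper directly constructs a null-homotopy $g$ for a cocycle $f\in C^m$ by setting $g=0$ on the ``boundary'' range $P_{i+m-1}\to Q_i$, $i=0,\dots,-m$, and extending inductively in both directions --- the vanishing on that range is precisely what keeps $g$ inside $C$. You instead decompose $C^m$ into $A^{++}\oplus A^{--}$, identify the first piece with a truncation of $\Hom^*(P^+,Q^+)$ and the second with $\Hom^*(\widetilde P,\widetilde R)$ for the reindexed injective coresolutions, and invoke vanishing of $\Ext^{<0}_{kG}(k,k)$ twice. This is a more structural argument and makes visible \emph{why} the cohomology vanishes in negative degrees (each half computes ordinary $\Ext$). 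For $m=0$ the paper shows $d(C^{-1})=d\bigl(\Hom^{-1}(P,Q)\bigr)\cap C^0$ and reduces to $\hat H^0(G)\cong k$; you instead parametrize cocycles by the pair of scalars $(\lambda,\mu)$ and observe that the splicing differential forces $\lambda=\mu$. This is a nice observation and does pin down the mechanism behind the paper's diagram.

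One point you should flesh out in degree $0$: you state that coboundaries from $C^{-1}$ leave $\lambda,\mu$ unchanged and conclude $H^0(C)\cong\{\lambda=\mu\}$, but this only shows the map $H^0(C)\to k$, $[f]\mapsto\lambda$, is well-defined. You also need that it is surjective (lift the identity of $k$ to chain maps $f^{++}\colon P^+\to Q^+$ and $f^{--}\colon P^{<0}\to Q^{<0}$; the pair satisfies the mixed relation because $\lambda=\mu=1$) and injective (if $\lambda=\mu=0$, take null-homotopies $g^{++}$, $g^{--}$ of $f^{++}$, $f^{--}$; setting the $P_{-1}\to Q_0$ component to zero produces $g\in C^{-1}$ with $dg=f$, precisely because the defining condition of $C^{-1}$ kills that component). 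Without these two checks the parametrization by $(\lambda,\mu)$ is not yet a computation of $H^0(C)$. With them the argument is complete, and the assumption that $p$ divides $|G|$ enters only through the fact that the complete projective resolutions $P$ and $Q$ are genuinely exact (so that $\eta_Q\varepsilon_P\ne 0$ and the lifting arguments apply), matching the paper's parenthetical remark in its $m=0$ step.
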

\begin{proof}
Clearly, $H^m(C)=0$ for $m>0$. Let $m<0$, and let $f\in C^m$ be a cocycle. Define $g:P_{i+m-1}\rightarrow Q_i$ to be zero for all $i=0,1,\dots,-m$. By common homological algebra we can extend $g$ to a chain null-homotopy for $f$ (the conditions needed for the inductive construction of $g$ is that $\partial g \partial = f$ at the two boundary points of the domain on which $g$ has been defined, and this condition is clearly satisfied). Then $dg=f$ with $g\in C$, and hence $H^m(C)=0$ for $m>0$.

Finally, we claim that the image of $d:C^{-1}\rightarrow C^0$ is the same as the image of $d:\Hom^{-1}(P,Q)\rightarrow C^0$ (then it follows that $H^0(C) \cong H^0(G) \cong k$). Let $f\in\Hom^{-1}(P,Q)$; then the bottom row in the diagram
\[ \xymatrix{ P_0 \ar[rr]^{\partial} \ar[dr]_{\varepsilon}  & & P_1 \ar[r]^{f} & Q_0 \ar[dr]_(0.4){\varepsilon^{\otimes j}} \ar[rr]^{\partial} & & Q_{-1} \\
  & k \ar[rrr] \ar[ur]_{\eta} & & & k \ar[ur]  }
\]
is stably trivial and therefore the zero map (we assume here that $|G|$ is divisible by $p$, which is the only interesting case). Therefore the upper row vanishes, and by usual homological algebra there is a cocycle $g\in\Hom^{-1}(P,Q)$ with $f_0=g_0:P_{-1}\rightarrow Q_0$. Then $f-g\in C^{-1}$ and $d(f-g) = df$, so we are done.
\end{proof}

We finally use a method of chopping off the positive part of a dg module. Given a dg module $A$, define $F(A)$ to be the dg submodule given by 
\[ F(A)^m  = \begin{cases} 0 & \text{if $m>0$,} \\ \text{cycles of $A^0$} & \text{if $m=0$,} \\ A^m & \text{if $m<0$.} \end{cases} \]
(This can be viewed as the (co)connected cover of $A$.) Then the inclusion $F(A)\subseteq A$ induces an isomorphism $H^*(F(A)) \cong H^*(A)$ in non-positive degrees.

\begin{lemma}
 The augmentation $\C(j) \rightarrow k$ is a quasi-isomorphism. Thus, the operad $\C$ is acyclic.
\end{lemma}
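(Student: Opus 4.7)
The plan is to realize $\C(j)$ as $F(\lambda^{-1}(C))$, where $\lambda\colon \Hom^*(P,P^{\boxtimes j})\to\Hom^*(P,Q)$ and $C\subseteq \Hom^*(P,Q)$ are as above, and then chain together the three preceding lemmas.

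First I would verify the identification $\C(j) = F(\lambda^{-1}(C))$. For $m>0$ both sides vanish by definition of $F$. In degree $0$, an element $f\in\lambda^{-1}(C)^0$ satisfies $d(\Phi f)=0$, equivalently $\Phi(df)=0$; restricting to cycles (via $F$) forces $df=0$, which is exactly the defining condition of $\C(j)^0$. For $m<0$ the key computation is to unwind what $\Phi\circ f\in C^m$ means: recalling that $f\in\Hom^m$ sends $P_i$ to $(P^{\boxtimes j})_{i-m}$, and that $\Phi$ in non-negative degrees is the projection onto $\bigoplus_{s_l\geq 0,\ \sum s_l=i-m} P_{s_1}\otimes\cdots\otimes P_{s_j}$, the condition "$P_i\to Q_\ell$ is zero for $i<0\leq \ell$" translates precisely into the requirement that every component $P_i \to P_{s_1}\otimes\cdots\otimes P_{s_j}$ of $f$ with $i<0$ and $s_l\geq 0$ vanishes. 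That is exactly the defining condition of $\C(j)^m$.

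Once this identification is in hand, the acyclicity follows by a routine chase. The previous lemma (applied to the surjective quasi-isomorphism $\lambda$) gives that the restriction $\lambda\colon\lambda^{-1}(C)\to C$ is a quasi-isomorphism. By Lemma~\ref{cohomologyofC} we have $H^*(C)\cong k$, concentrated in degree~$0$. Hence $H^*(\lambda^{-1}(C))\cong k$ in degree~$0$. Since the connected-cover functor $F$ annihilates positive degrees and does not affect cohomology in non-positive degrees, we conclude
\[ H^*(\C(j)) \;=\; H^*(F(\lambda^{-1}(C))) \;\cong\; k \]
concentrated in degree~$0$.

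It remains to check that the augmentation $\C(j)\to k$ realizes this isomorphism. The map $\varepsilon^{\boxtimes j}$ factors as the composite $P^{\boxtimes j}\xrightarrow{\Phi} Q \xrightarrow{\varepsilon} k$, so postcomposition with $\varepsilon^{\boxtimes j}$ on $\C(j)^0$ agrees, after passing to cohomology, with the composite $H^0(\C(j))\xrightarrow{\cong} H^0(C)\xrightarrow{\cong}k$ used above. In particular it sends the class of a chain lift of $\id_k$ to $1$, so it is a nonzero map between two copies of $k$, hence an isomorphism. This proves that the augmentations are quasi-isomorphisms and that $\C$ is acyclic. The main obstacle is really only the first step: matching the combinatorial "vanishing on non-negative tensor factors" condition built into $\C(j)$ with the simpler condition "$P_i\to Q_\ell$ vanishes for $i<0\leq \ell$" defining $C$; once that bookkeeping is done, the rest is a formal assembly of the earlier lemmas.
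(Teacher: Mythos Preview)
Your proof is correct and follows exactly the paper's approach: identify $\C(j)=F(\lambda^{-1}(C))$, then combine the surjective-quasi-isomorphism lemma, Lemma~\ref{cohomologyofC}, and the property of $F$ to compute $H^*(\C(j))\cong k$, and finally observe that a chain lift of $\id_k$ witnesses that the augmentation is nonzero on $H^0$. The only difference is expository: you spell out the verification of $\C(j)=F(\lambda^{-1}(C))$ in each degree, whereas the paper simply asserts it.
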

\begin{proof}
 Note that $\C(j) = F ( \lambda^{-1}(C) )$, so that $H^*(\C(j))\cong H^*(C)\cong k$ by Proposition~\ref{cohomologyofC}. Since there is a cocycle $f\in \C(j)^0$ such that $P\xrightarrow{f} P^{\boxtimes j} \xrightarrow{\varepsilon^{\boxtimes j}} k$ equals the augmentation $\varepsilon$, the map $\C(j)\rightarrow k$ is onto in $H^0$ and therefore a quasi-isomorphism.
\end{proof}

For every operad $\A$, the module $\A(0)$ is an algebra over $\A$ via the action map $\A(j)\otimes \A(0)^{\otimes j}\rightarrow \A(0)$. In particular, $\Hom^*(P,k)$ is an algebra over the co-endomorphism operad $\Hom^*(P,P^{\otimes j})$, and we can restrict the operad action to the sub-operad $\C$. Hence, $\Hom^*(P,k)$ is a $\C$-algebra.

\begin{lemma}\label{operadRightProduct}
 The operad $\C$ induces an $E_\infty$-structure on $\Hom^*(P,k)$ in such a way that the product on $H^*\Hom^*(P,k)$ agrees with the composition product of the Tate cohomology ring $\hat{H}^*(G)$.
\end{lemma}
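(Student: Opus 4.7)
The plan splits into two steps: upgrading $\C$ to an honest $E_\infty$-operad acting on $\Hom^*(P,k)$, and identifying the induced degree-zero product on cohomology with the Tate composition product.

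For the first step, I would fix once and for all an $E_\infty$-operad $\E$ of chain complexes of $k$-modules (the normalized chains on the Barratt--Eccles operad are convenient) and form the tensor product operad $\C\otimes\E$, whose arity-$j$ component is $\C(j)\otimes\E(j)$ with diagonal $\Sigma_j$-action. This is acyclic by the K\"unneth theorem, and it is $k\Sigma_j$-free because the action on $\E(j)$ already is; hence $\C\otimes\E$ is itself an $E_\infty$-operad. Since $\E$ is unital, its augmentation assembles into an operad map $\E\to\mathrm{Com}$, and combining with the identity of $\C$ yields an operad map $\C\otimes\E \to \C\otimes\mathrm{Com}\cong\C$. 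Restricting the $\C$-algebra structure on $\Hom^*(P,k)$ along this map endows $\Hom^*(P,k)$ with a $\C\otimes\E$-algebra structure, which is the desired $E_\infty$-structure.

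For the second step, I would use that the product on $H^0$ of an algebra over an acyclic unital operad is induced by the action of any degree-zero cocycle in arity two whose image under augmentation is the unit of $k$. In our situation I would take a cocycle of the form $c\otimes e$ with $e\in\E(2)^0$ representing the generator of $H^0\E(2)\cong k$, and $c\in\C(2)^0$ a chain map $P\to P^{\boxtimes 2}$ satisfying $\varepsilon^{\boxtimes 2}\circ c = \varepsilon$. Existence of such a $c$ is immediate: any chain-level lift $\Delta$ of $\id_k$ as in Remark~\ref{deltaproduct} lies in $\C(2)^0$, since the defining condition of $\C(2)^0$ merely requires being a chain map and imposes no additional vanishing. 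The action of $c\otimes e$ on a pair $(f,g)\in\Hom^*(P,k)^{\otimes 2}$ factors through the operad projection $\C\otimes\E\to\C$ and thus reduces to $(f,g)\mapsto (f\boxtimes g)\bullet c$, which by Remark~\ref{deltaproduct} represents $[f]\cdot[g]$ in $\hat{H}^*(G)$. Passing to cohomology finishes the proof.

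The main obstacle that I anticipate is purely bookkeeping: one must unwind the abstract operadic structure map $\C(j)\otimes\C(0)^{\otimes j}\to\C(0)$ on $\Hom^*(P,k)=\C(0)$, via the embedding of $\C$ into the coendomorphism operad $\F$, to see that it really becomes the explicit formula $g\otimes f_1\otimes\dots\otimes f_j\mapsto (f_1\boxtimes\dots\boxtimes f_j)\bullet g$ recorded in the definition of $\F$. Once this identification and the corresponding independence from the choice of $\E$ are granted, the comparison with Remark~\ref{deltaproduct} is immediate.
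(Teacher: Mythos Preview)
Your approach is essentially identical to the paper's: tensor $\C$ with an auxiliary $E_\infty$-operad to obtain $\Sigma$-freeness, pull back the $\C$-action along the resulting augmentation map, and then identify the induced product via the diagonal $\Delta\in\C(2)^0$ of Remark~\ref{deltaproduct}. One small slip: you write $\Hom^*(P,k)=\C(0)$, but in fact $\C(0)=k$ (only the augmentation $\varepsilon$ survives); the algebra $\Hom^*(P,k)$ is $\F(0)$ for the ambient coendomorphism operad, and the $\C$-action on it is the restriction of the $\F$-action along $\C\hookrightarrow\F$, exactly as you indicate in the next clause.
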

\begin{proof}
 The operad $\C$ might itself not be $\Sigma$-free, so we have to choose an approximation of $\C$ by an $E_\infty$-operad. One possible way of doing so is to choose an arbitrary $E_\infty$-operad $\E'$ and tensor its augmentation $\E'\rightarrow k$ with $\C$. Then $\E=\C\otimes \E'$ is an $E_\infty$-operad acting on $\Hom^*(P,k)$ via the action of $\C$ pulled back along the morphism of operads $\E\rightarrow \C$. The statement about the product follows from the fact that the element $\Delta\in\C(2)^0\subseteq \Hom^0(P,P\boxtimes P)$ given in Remark~\ref{deltaproduct} generates the cohomology $H^0(\C(2))$ and induces the right product on $H^*\Hom^*(P,k)$.
\end{proof}

\subsection{Comparison with Steenrod reduced powers}
For the proof of part \eqref{maintheoremsteenrodpart} of Theorem~\ref{maintheorem} we need to recall the construction of Steenrod operations in the cohomology of cocommutative Hopf algebras. Let $\widetilde{P}$ be an ordinary projective resolution of $k$, viewed as a complex $\dots\leftarrow 0\leftarrow P_0\leftarrow P_1\leftarrow\dots$. Then $\widetilde{P}^{\otimes j}$ is a projective resolution of $k$ for all $j$. Consider the suboperad $\A(j)=F(\Hom(\widetilde{P},\widetilde{P}^{\otimes j}))$ of the coendomorphism-operad $\Hom(\widetilde{P},\widetilde{P}^{\otimes j})$. Then $\A$ is acyclic, and $\Hom(\widetilde{P},k)$ is an $\A$-algebra in the obvious way. Using an $E_\infty$-approximation of $\A$, this operad action defines the Steenrod operations on $H^*\Hom^*(\widetilde{P},k)\cong H^*(G)$. 

Extend $\widetilde{P}$ to a complete projective resolution $P$ of $k$. We are now going to write down a quasi-isomorphism of unital operads $\C\rightarrow \A$. Let us begin with a function $\Psi$ which maps an element $f\in\Hom^*(P,P^{\boxtimes j})$ to the element in $\Hom^*(\widetilde{P},\widetilde{P}^{\otimes j})$ given by the composition
\[ \widetilde{P} \xrightarrow{\iota} P \xrightarrow{f} P^{\boxtimes j} \xrightarrow{\pi^{\boxtimes j}} \widetilde{P}^{\otimes j}. \]
Notice here that the inclusion map $\iota$ is not quite a chain map; its differential $d\iota$ in $\Hom(\widetilde{P},P)$ is zero everywhere except for $\widetilde{P}_0\rightarrow P_{-1}$. On the other hand, the projection map $\pi$ is a chain map, and therefore
\[ d (\pi f \iota) = \pi d(f) \iota \pm \pi f d(\iota) \]
in $\Hom^*(\widetilde{P},\widetilde{P}^{\otimes j})$. Now assume that $f\in \C(j)$; then either $f$ is of non-negative degree, in which case $\pi f d(\iota)$ is zero (because $\pi$ vanishes in negative degrees), or $f$ is of negative degree, but then $f$ is zero as maps $P_{-1}\rightarrow P_{s_1}\otimes P_{s_2}\otimes\dots\otimes P_{s_j}$ for all $s_i\geq 0$, and $\pi$ is zero on all other factors of $P^{\boxtimes j}$ of interest. Hence $ d(\Psi(f)) = \Psi(df)$, so that $\Psi$ restricted to $\C(j)$ is indeed a map of dg modules. We get a map $\Psi:\C\rightarrow \A$ of unital operads, and we need to show that $\Psi$ commutes with the augmentations of $\C(j)$ and $\A(j)$. This follows from the following commutative diagrams:
\[
 \xymatrix{ \tilde{P} \ar[r]^{\iota} \ar[dr]_{\varepsilon} & P \ar[d]^{\varepsilon}  & & P^{\boxtimes j} \ar[dr]_{\varepsilon^{\boxtimes j}} \ar[r]^{\pi^{\boxtimes j}} & \widetilde{P}^{\otimes j} \ar[d]^{\varepsilon^{\otimes j}} \\
 & k & & & k }
\]

\begin{proof}[Proof of Theorem~\ref{maintheorem}]
Everything except part \eqref{maintheoremsteenrodpart} is a consequence of Proposition~\ref{operadRightProduct} and the fact that $E_\infty$-structures can be used to construct power operations with the desired properties; see, e.g., I.\S7 in \cite{OperadsAlgebrasModulesMotives}, I.\S1 in \cite{HomologyIteratedLoopSpaces}, and \cite{MayGeneralAlgebraic}. For part \eqref{maintheoremsteenrodpart}, note that by construction of the operations $Q_i$ via $\C$ and the Steenrod operations via $\A$ we get the desired statement for $n\geq 0$. To prove $Q_{-n} (x)= 0$ and $\beta Q_{-n}(x)=0$ for $n<0$ it is enough to notice that for elements $f$ in $\C(p)$ we have that $f:P_{\text{neg}} \rightarrow P_{|x|}^{\otimes p}$ vanishes.
\end{proof}

\section{Products of groups}\label{productsofgroups}
This section is devoted to the proof of Theorem~\ref{producttheorem}. Let $G$ be any finite group whose order is divisible by $p$. As a first step, we shall define a new operad action defining some power operations on $M^*(G)$. In the second step we prove that these operations agree with the Dyer-Lashof operations coming from $\hat{H}^*(G)$. 

Let $P$ be a complete projective resolution of $k$ as a $kG$-module. We denote by $\ob{P}$ the complex $\dots\leftarrow P_{-2}\leftarrow P_{-1} \leftarrow 0 \leftarrow 0 \leftarrow \dots$, with the $P_{-1}$ sitting in degree $0$ and with differential $\partial_{\ob{P}} = - \partial_P$, and let $\eta:k\rightarrow \ob{P}$ be the coaugmentation. For $j\geq 1$ define the differential graded module $\B(j) = F (\Hom^*(\ob{P},\ob{P}^{\otimes j}))$. Also put $\B(0)=k$; we want to turn $\B$ into a unital operad, so we need to define the structure maps
\[ \gamma: \B(j) \otimes \B(i_1)\otimes\dots\otimes \B(i_j) \rightarrow \B(i_1+\dots+i_j). \]
As long as all $i_s$'s are positive, we simply take the usual structure maps of the coendomorphism-operad $\Hom^*(\ob{P},\ob{P}^{\otimes j})$. If one of the $i_s$'s is zero, then we put $\gamma=0$ unless $i_1=i_2=\dots=i_j=0$, in which case
\begin{align} \label{theBaugmentation} 
 \gamma: \B(j) \otimes \B(0)^{\otimes j} = \B(j) \rightarrow \Hom_{kG}(k,k) = k = \B(0) 
\end{align}
sends a chain map $\ob{P}\rightarrow \ob{P}^{\otimes j}$ in $\B(j)$ to the induced map $k\rightarrow k^{\otimes j}=k$ on zero-cycles. It is now straightforward to check that $\B$ is indeed a unital symmetric operad. Also, $\B$ is acyclic because by usual homological algebra the augmentations $\B(j)\rightarrow k$ are quasi-isomorphisms.

Now $\Hom^*(\ob{P},k)$ is a $\B$-algebra, so we get Dyer-Lashof operations on $H^* \Hom^*(\ob{P},k) \cong M^*(G)$ which we are now going to compare with those obtained from $\C$. Let $\iota\in\Hom^1(\ob{P},P)$ be the inclusion, and let $\pi\in\Hom^{-1}(P,\ob{P})$ be the projection map. Then $d\iota=0$, but $d\pi\neq 0$. Let $K$ be the cochain complex of $k$-vector spaces generated by an element $x$ of degree~$-1$ which is mapped by the differential to a non-trivial element $y$ in degree $0$:
\[ 
 \xymatrix@R=0pt{
   \dots & 0 \ar[l] & k\left<y\right> \ar[l] & k\left<x\right> \ar[l] & 0 \ar[l] & \dots \ar[l] \\
         &          &              y         & x \ar@{|->}[l] }
\]
Let $Y=K^{\otimes p}$, and then define the augmented cochain complex $X$ by the formula $X_i = Y_{i-1}$ for all $i\leq 0$ with augmentation $X_0\rightarrow Y_0=k\left<y^p\right>$. Then $X$ is an acyclic augmented complex of $k\Sigma_p$-modules. The map of cochain complexes $K\rightarrow \Hom^*(P,\ob{P})$ given by $x\mapsto \pi$ induces a map of cochain complexes $\varphi:Y\rightarrow \Hom^*(P^{\boxtimes p},\ob{P}^{\otimes p})$. Let us define $\sigma:X\otimes \C(p)\rightarrow \B(p)$ by the formula $\sigma(v \otimes f) = (-1)^{|f|}\varphi(v) \circ f \circ \iota$ (the sign coming from shifting $Y$ to $X$). 

\begin{lemma}\label{sigmalemma}
The map $\sigma:X\otimes \C(p) \rightarrow \B(p)$ enjoys the following properties:
\begin{itemize}
 \item[(a)] it is a $\Sigma_p$-equivariant cochain map lifting the identity of $k$,
 \item[(b)] for every $f\in\C(p)$ we have $\sigma(x^p \otimes f) = (-1)^{|f|} \pi^{\boxtimes p} \circ f \circ \iota$, and
 \item[(c)] for every element $w\in X\otimes \C(p)$ of bidegree $(m,n)$ with $m>-p+1$, and for every cocycle $a\in \Hom^*(\ob{P},k)$, we have $\gamma(\sigma(w)\otimes a^{\otimes p})=0$, where $\gamma$ is the operad action of $\B$ on $\Hom^*(\ob{P},k)$.
\end{itemize}
\end{lemma}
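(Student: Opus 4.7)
For part (a), I would first verify that $\sigma$ preserves the grading: if $v\otimes f$ has bidegree $(m,n)$ with $m,n\leq 0$, then $\varphi(v)\circ f\circ\iota$ has total degree $(m-1)+n+1 = m+n\leq 0$, so it lies in $\B(p)$, and at total degree $0$ it is automatically a cocycle since $d_X$ vanishes on $X_0$ and $f\in\C(p)^0$ is already a cocycle. The cochain map property follows from the Leibniz rule combined with $d\iota=0$ and the fact that $\varphi$ is itself a cochain map (by construction $\varphi(x)=\pi$ and hence $\varphi(y)=d\pi$); a short sign bookkeeping, using $|v|_X = |v|_Y + 1$ and the $(-1)^{|f|}$ prefactor in $\sigma$, yields $d\sigma(v\otimes f) = \sigma(d(v\otimes f))$. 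Equivariance under $\Sigma_p$ is immediate since $\varphi$ is a $p$-fold tensor power and $\iota, f$ carry no $\Sigma_p$-action. To see that $\sigma$ lifts the identity on $k$, one evaluates on a generator of top cohomology on each side: for $v_0\in X_0$ with augmentation image $1\in Y_0 = k$ and $f_0=\Delta\in\C(p)^0$ representing the unit, a direct calculation shows that $\sigma(v_0\otimes f_0)$ represents the generator of $H^0(\B(p))=k$.

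For part (b), $\varphi$ is by construction the $p$-fold tensor power of $x\mapsto\pi$, so $\varphi(x^p)=\pi^{\otimes p}$, which as a map $P^{\boxtimes p}\to\ob{P}^{\otimes p}$ is precisely $\pi^{\boxtimes p}$. The claim follows immediately from the definition of $\sigma$.

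The main content is part (c). The key structural observation is that $d\pi$ factors through $k$: a direct computation using $\partial_{\ob{P}} = -\partial_P$ shows that $d\pi$ is concentrated on $P_0$, where $d\pi|_{P_0} = \partial_P : P_0 \to P_{-1} = \ob{P}_0$ equals the composition $P_0\xrightarrow{\varepsilon}k\xrightarrow{\eta}\ob{P}_0$, while $d\pi$ vanishes on $P_i$ for $i\neq 0$. The hypothesis $m>-p+1$ unwinds, via $X_i = Y_{i-1}$ and $Y = K^{\otimes p}$, to the statement that every basis monomial summand of $v$ in $K^{\otimes p}$ contains at least one tensor factor equal to $y$; hence $\varphi(v) = \varphi(v_1)\otimes\dots\otimes\varphi(v_p)$ (with $\varphi(x)=\pi$, $\varphi(y)=d\pi$) has at least one slot containing $d\pi$. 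Now the running hypothesis $p\mid|G|$ enters essentially: since $k$ is not projective as a $kG$-module, the coaugmentation $\eta:k\hookrightarrow P_{-1}$ admits no $kG$-linear retraction, which forces every $kG$-linear map $a:P_{-1}\to k$ to vanish on $\eta(k)$. Consequently $a\circ d\pi = 0$ for any cocycle $a\in\Hom^*(\ob{P},k)$: in degree $|a|=0$ this gives $a\circ\eta\circ\varepsilon = 0$, while for $|a|<0$ the image of $d\pi$ lies in $\ob{P}_0$, which is not in the domain where $a$ is nonzero. Therefore $a^{\otimes p}\circ\varphi(v) = 0$, and accordingly
\[
 \gamma\bigl(\sigma(w)\otimes a^{\otimes p}\bigr) = (-1)^{|f|}\,a^{\otimes p}\circ\varphi(v)\circ f\circ\iota = 0.
\]

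The hardest part is (c): it rests on the explicit identification $d\pi = \eta\circ\varepsilon$ together with the non-projectivity of $k$ (from $p\mid|G|$), which is what makes $a\circ\eta = 0$ automatic for every cocycle.
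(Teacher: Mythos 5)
Your proposal is essentially correct and follows the same strategy as the paper: identify $d\pi$ as being concentrated on $P_0$, where it equals $\eta\circ\varepsilon$; unwind the bidegree hypothesis $m>-p+1$ into the statement that every summand of the relevant element of $Y=K^{\otimes p}$ contains a factor $y$, hence $\varphi$ produces a $d\pi$ in some slot; and conclude that $a\circ d\pi=0$ kills the composite. Two comments on relative completeness are worth recording.

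First, in part (c) you actually fill a small gap the paper leaves implicit. The paper asserts ``$a\circ u_i=0$'' when $u_i=d\pi$ without explanation, and since the cocycle condition on $a$ is vacuous in degree $0$ (there is no $\ob{P}_1$), the cocycle hypothesis cannot be what makes this work; the genuine reason is exactly the one you give --- $\eta:k\hookrightarrow P_{-1}$ has no $kG$-linear retraction because $k$ is not projective when $p$ divides $|G|$, and any nonzero $a\circ\eta\in\Hom_{kG}(k,k)=k$ would rescale to a retraction. Your observation that the running hypothesis $p\mid|G|$ enters here ``essentially'' is exactly right and is a point the paper does not make explicit.

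Second, the verification in part (a) that $\sigma$ lifts the identity of $k$ is underdeveloped compared to the paper. You state ``a direct calculation shows that $\sigma(v_0\otimes f_0)$ represents the generator of $H^0(\B(p))$,'' which is precisely the non-trivial claim rather than an argument for it. The paper proves it by choosing the particular generator $x\otimes y^{\otimes(p-1)}\in Y_{-1}\cong X_0$, so that the relevant component of $\sigma(v_0\otimes f)$ is $\pi\boxtimes(d\pi)^{\boxtimes(p-1)}\circ f$, and then exhibits a hexagonal commutative diagram through $P_0$ and $(P^{\boxtimes p})_0$ showing that precomposition with $\eta$ gives $\eta^{\otimes p}$ (using surjectivity of $\varepsilon:P_0\to k$ to deduce commutativity of the outer square from the inner triangles). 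Some such explicit check, tracing the composite on zero-cycles of $\ob{P}$, is needed to make this step rigorous. The cochain-map and $\Sigma_p$-equivariance parts of (a), and all of (b), are fine; do note though that $f\in\C(p)$ does carry a $\Sigma_p$-action (post-composition with target permutations), so equivariance of $\sigma$ rests on the compatibility of the action on $\varphi$, $f$, and $\B(p)$ simultaneously, not on $f$ being inert.
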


\begin{proof}
 (b) follows directly from the definition. 
To show (c), let $w=w_1\otimes\dots\otimes w_p\otimes f\in X\otimes \C(p)$ with $w_i\in\{x,y\}$ for all $i$. Up to a sign, $\gamma(\sigma(w)\otimes a^{\otimes p})$ is given by the composition
\[ \ob{P} \xrightarrow{\iota} P \xrightarrow{f} P^{\boxtimes p} \xrightarrow{ u_1\boxtimes \dots \boxtimes u_p } \ob{P}^{\otimes p} \xrightarrow{a^{\otimes p}} k, \]
where $u_i=\pi$ if $w_i=x$ and $u_i=d\pi$ if $w_i=y$. From the condition on the bidegree of $w$ we know that at least one of the $u_i$'s equals $d\pi$, so that $a\circ u_i=0$, which implies (c).

For (a), let $f\in\C(p)$ be a cocycle in degree $0$ mapping to $1$ under the augmentation $\C(p)\rightarrow k$; then consider the following diagram
\begin{align*}
 \xymatrix{ k \ar[rrr]^{\eta} \ar[ddd]_{\eta^{\otimes p}} & & & P_{-1} \ar[ddd]^{f} \\
 & & P_0 \ar[ull]_{\varepsilon} \ar[ur]^{\partial} \ar[dl]^{f} \\
 & (P^{\boxtimes p})_0 \ar[uul]_{\varepsilon^{\boxtimes p}} \ar[dl]^(0.4){-(d\pi)^{\boxtimes p}} \ar[drr]^{\partial} \\
 (\ob{P}^{\otimes p})_0 & & & \ar[lll]^{\pi\boxtimes(d\pi)^{\boxtimes (p-1)}} (P^{\boxtimes p})_{-1} } 
\end{align*}
All smaller parts commute, and since $P_0\xrightarrow{\varepsilon} k$ is surjective we can deduce that the exterior square commutes. Therefore, $\sigma$ indeed lifts the identity of $k$. Also, $\sigma$ is $\Sigma_p$-equivariant and is a cochain map because $d\iota=0$, so (a) is proved.
\end{proof}

\begin{lemma}
 The operad action of $\B$ on $\Hom^*(\ob{P},k)$ and the action of $\C$ on $\Hom^*(P,k)$ define the same operations on $M^*(G)$.
\end{lemma}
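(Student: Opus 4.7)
My plan is to use the comparison map $\sigma$ of Lemma~\ref{sigmalemma} to match every Dyer--Lashof cocycle on the $\B$-side with one on the $\C$-side, and to verify the matching is compatible with the cochain-level identification $M^n(G)\cong \hat H^{n-1}(G)$.

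I would first fix an $E_\infty$-operad $\E$ and pass to the $E_\infty$-approximations $\C\otimes \E$ and $\B\otimes \E$ used in Proposition~\ref{operadRightProduct}. Tensoring $\sigma$ with $\id_{\E(p)}$ yields a $\Sigma_p$-equivariant cochain map
\[
\tilde\sigma:\, X\otimes \C(p)\otimes \E(p)\longrightarrow \B(p)\otimes \E(p)
\]
preserving augmentations to $k$ by Lemma~\ref{sigmalemma}(a). The source is acyclic (all three factors are) and $\Sigma_p$-free via the $\E(p)$-factor, hence a $\Sigma_p$-free resolution of $k$; since $\B(p)\otimes\E(p)$ is another such resolution, $\tilde\sigma$ is automatically an equivariant quasi-isomorphism, so no Dyer--Lashof class is lost in either direction.

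The core of the argument is a cochain-level computation. A cocycle $\bar a\in \Hom^n(\ob{P},k)$ with $n<0$ corresponds under $M^n(G)\cong \hat H^{n-1}(G)$ to the cocycle $\tilde a\in \Hom^{n-1}(P,k)$ obtained by extending $\bar a$ by zero on $P_j$ for $j\neq n-1$, and a direct comparison gives
\[
\bar a^{\otimes p}\circ \pi^{\boxtimes p}\;=\;\tilde a^{\boxtimes p}
\]
as maps $P^{\boxtimes p}\to k$, both sides vanishing outside the summand $P_{n-1}^{\otimes p}$ and agreeing there. Combined with Lemma~\ref{sigmalemma}(b), for any cocycle $f\in \C(p)\otimes \E(p)$ I obtain
\[
\bar a^{\otimes p}\circ \tilde\sigma(x^p\otimes f)\;=\;(-1)^{|f|}\,\tilde a^{\boxtimes p}\circ f\circ \iota,
\]
so the $\B$-operation on $\bar a$ coming from $\tilde\sigma(x^p\otimes f)$ equals, up to a universal sign, the $\C$-operation on $\tilde a$ coming from $f$, postcomposed with the restriction $\iota^*$. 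This $\iota^*$ is precisely the cochain-level incarnation of the identification $\hat H^{*-1}(G)\to M^*(G)$.

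To finish, observe that every cocycle $w\in X\otimes \C(p)\otimes \E(p)$ has its $x^p$-component being a cocycle in $\C(p)\otimes\E(p)$, since the differential of $X$ strictly increases the $X$-bidegree out of $-p+1$. By Lemma~\ref{sigmalemma}(c), the higher $X$-bidegree summands of $w$ act trivially on $\bar a^{\otimes p}$ via $\tilde\sigma$, so the Dyer--Lashof operation associated to $w$ depends only on its $x^p$-component and, by the identity above, agrees with $\iota^*$ applied to the corresponding $\C$-operation. Combined with the quasi-isomorphism statement from step~2 this proves the two families of operations on $M^*(G)$ coincide. The main obstacle I expect is the careful bookkeeping: matching Dyer--Lashof indices through $\tilde\sigma$ on both sides and tracking the signs arising from the shift $Y\rightsquigarrow X$ so that the universal sign $(-1)^{|f|}$ is absorbed into the standard normalisation of $Q_s$ on both sides.
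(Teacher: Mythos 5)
Your proposal is correct and follows essentially the same route as the paper: the comparison map $\sigma$ of Lemma~\ref{sigmalemma}, its properties (b) and (c), the cochain identity $\bar a^{\otimes p}\circ\pi^{\boxtimes p}=\tilde a^{\boxtimes p}$, and the observation that only the ``$x^p$-component'' contributes are all present in both arguments. The only (cosmetic) differences are in packaging: the paper avoids the $\E(p)$-tensoring entirely by lifting along a free $kC_p$-resolution $W$ directly into $X\otimes\C(p)$, and it makes the shift explicit through the $\Sigma_p$-equivariant map $\tau=\epsilon\otimes\id:X\otimes\C(p)\to\C(p)[1-p]$ and the shifted action $\gamma_\C$, rather than phrasing the identification via $\iota^*$; this also settles the sign bookkeeping you flagged at the end.
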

\begin{proof}
Let $\epsilon:X\rightarrow k[1-p]$ be the $k\Sigma_p$-linear chain map given by $x^p \mapsto 1$, and define $\tau = \epsilon\otimes\id_\C:X\otimes \C(p)\rightarrow \C(p)[1-p]$. By suitably shifting the action of $\C(p)$ on the negative part of $\Hom^*(P,k)$ we get a map defined by
\begin{align*}
 \gamma_\C:\C(p)[1-p]\otimes\Hom^*(\ob{P},k)^{\otimes p} &\rightarrow \Hom^*(\ob{P},k) \\
  f \otimes w &\mapsto (-1)^{|f|\cdot (|w|+1)} w\circ \pi^{\boxtimes p} \circ f \circ \iota 
\end{align*}
for all $f\in\C(p)[1-p]$ and $w\in\Hom^*(\ob{P},k)^{\otimes p}$. The sign is due to the Koszul sign rule, and the check that this is indeed a map of chain complexes uses the fact that $a\circ (d\pi)=0$ for all $a\in\Hom(P,k)$. Now $\gamma_\C$ can be used to construct the power operations on $M^*(G)$ as follows. Let $W$ be the standard free resolution of the trivial $k C_p$-module $k$ (where $C_p$ denotes the cyclic group of order $p$), so that $W_i$ is generated by a single element $e_i$. Since $X\otimes \C(p)$ is an acyclic augmented complex of $kC_p$-modules, we can lift the identity of $k$ to a $C_p$-equivariant chain map $\vartheta:W\rightarrow X\otimes \C(p)$. We then have a diagram like this:
\[
 \xymatrix@R=3pt{ 
   & & \B(p) \\
   W \ar[r]^-{\vartheta} & X\otimes \C(p) \ar[ur]^-{\sigma} \ar[dr]_-{\tau} \\
   & & \C(p)[1-p] } 
\]
For cocycles $a\in\Hom^*(\ob{P},k)$, define $D_i^\C(a)$ to be the cohomology class of the cocycle $\gamma_\C(\tau\vartheta(e_i)\otimes a^p)\in\Hom^*(\ob{P},k)$, and define $D_i^\B(a)$ to be the class of $\gamma_\B(\sigma\vartheta(e_i)\otimes a^p)\in\Hom^*(\ob{P},k)$. We need to show that $D_i^\C=D_i^\B$, and for this it suffices to prove the identity
\[
  \gamma_\B ( \sigma(w) \otimes a^p ) = \gamma_\C ( \tau(w) \otimes a^p )
\]
for all $w\in X\otimes \C(p)$ and $a\in\Hom^*(\ob{P},k)$. We can write $w = x^p\otimes f + \sum_i u_i\otimes f_i$ with $u_i\in X$ of degree $|u_i|>1-p$, and $f,f_i\in\C(p)$. By Proposition~\ref{sigmalemma} we have that
\[ \gamma_\B( \sigma(w) \otimes a^p ) = \gamma_\B( \sigma(x^p \otimes f) \otimes a^p ) = (-1)^{|f|} \gamma_\B\bigl( (\pi^{\boxtimes p}\circ f\circ \iota) \otimes a^p \bigr)  \]
On the other hand, $\gamma_\C( \tau(w)\otimes a^p ) = \gamma_\C( f\otimes a^p)$, and all these expressions equal $(-1)^{|f|(|a^p|+1)}$ times the composition
\[ \ob{P} \xrightarrow{\iota} P \xrightarrow{f} P^{\boxtimes p} \xrightarrow{\pi^{\boxtimes p}} \ob{P}^{\otimes p} \xrightarrow{a^{\otimes p}} k. \qedhere \]
\end{proof}

\begin{proof}[Proof of Theorem~\ref{producttheorem}]
Let us write $G=G_1\times G_2$. Choose complete projective resolutions $P$ and $Q$ for $k$ as trivial $kG_1$- and $kG_2$-module, respectively. Then $k\rightarrow \ob{P}\otimes\ob{Q}$ is the negative part of a projective resolution of $k$ as $kG$-module. We denote by $\B^{G_1}$, $\B^{G_2}$ and $\B^{G}$ the operads constructed above using these resolutions; then we get a quasi-isomorphism of unital operads $\B^{G_1}\otimes \B^{G_2}\rightarrow \B^G$ by tensoring morphisms. Let us denote by $A_1 = \Hom_{kG_1}^*(\ob{P},k)$, $A_2=\Hom_{kG_2}^*(\ob{Q},k)$ and $A = \Hom_{kG}^*(\ob{P}\otimes \ob{Q},k)$ the corresponding $\B$-algebras, then the commutative diagram
\[ \xymatrix{
    \B^{G_1}(p)\otimes A_1^p \otimes \B^{G_2}(p)\otimes A_2^p \ar[r]\ar[d] & A_1 \otimes A_2 \ar[d] \\
    \B^{G}(p)\otimes A^p \ar[r] & A }
\]
implies the desired result.
\end{proof}

%------------------------------------------------------------------------------
\section{An alternative description of negative Ext-groups}\label{sectionNegativeExtGroups}
Let $n>0$. It is well-known that $\Ext_{kG}^n(A,B)=\underline{\Hom}_{kG}(\Omega^n A,B)$ admits a description via extensions of $B$ by $A$. We will now give a similar description of $\tate^{-n}_{kG}(A,B)\cong\underline{\Hom}_{kG}(A,\Omega^n B)$, which will be used throughout the next two sections.
Let us define a category $\K_n(A,B)$, whose objects are all the chain complexes
\[ C:\quad A\longrightarrow P_n\longrightarrow P_{n-1}\longrightarrow\dots \longrightarrow P_1\longrightarrow B \]
with projective modules $P_1,P_2,\dots,P_n$, and a morphism of two such complexes is a commutative diagram as follows:
\[ 
\xymatrix{
 C\ar[d] & A \ar[r]\ar@{=}[d] & P_n \ar[r]\ar[d] & P_{n-1} \ar[r]\ar[d] & \dots \ar[r] & P_1 \ar[r]\ar[d] & B\ar@{=}[d] \\
 C' & A \ar[r] & P'_{n} \ar[r] & P'_{n-1} \ar[r] & \dots \ar[r] & P'_1 \ar[r] & B  }
\]
For objects $C$ and $C'$, let us write $C\approx C'$ if there is a morphism $C\rightarrow C'$ in $\K_n(A,B)$. Define the relation $\sim$ on $\K_n(A,B)$ to be the equivalence relation generated by $\approx$, and put $K_n(A,B)=\K_n(A,B)/\sim$, the connected components of $\K_n(A,B)$. 

Let us fix a projective resolution of $B$:
\begin{align} \label{presoB}
P: && 0 \longrightarrow \Omega^n B \stackrel{i}{\longrightarrow}  P_n \longrightarrow  P_{n-1} \longrightarrow  \dots \longrightarrow P_1 \longrightarrow B \longrightarrow 0
\end{align}
\begin{thm} \label{ktheorem}
The map $\Phi:\Hom_{kG}(A,\Omega^n B)\rightarrow \K_n(A,B)$ which associates to each map $f:A\rightarrow\Omega^n B$ the complex $A\xrightarrow{\smash[b]{i\circ f}} P_n\rightarrow P_{n-1}\rightarrow\dots\rightarrow P_1\rightarrow B$ induces a bijection
$\underline{\Hom}_{kG}(A,\Omega^n B) \stackrel{1:1}{\longleftrightarrow}  K_n(A,B)$
which is natural in $G$.
\end{thm}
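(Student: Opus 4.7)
The plan is to construct an inverse $\Psi:K_n(A,B)\to\underline{\Hom}_{kG}(A,\Omega^nB)$ by lifting, and to verify that both compositions with $\Phi$ reduce to the identity. Given an object $C:A\xrightarrow{a}P'_n\to\cdots\to P'_1\to B$ in $\K_n(A,B)$, the projectivity of each $P'_i$ together with the exactness of the fixed resolution~\eqref{presoB} allows me to build a chain map $h_\bullet:C\to P$ covering the identity on $B$ by the standard comparison argument. The composite $h_n\circ a:A\to P_n$ is killed by $d_n$, so it lifts uniquely through $i:\Omega^nB\hookrightarrow P_n$ to a map I call $\Psi(C):A\to\Omega^nB$. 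To see that $\Psi$ descends to $K_n(A,B)$, I extend $P$ one step further by choosing a projective $P_{n+1}$ with a surjection $p:P_{n+1}\twoheadrightarrow\Omega^nB$; then any two lifts $h,h'$ admit a chain homotopy $s_\bullet$, and the computation $(h_n-h'_n)a = d_{n+1}s_na = i\circ(p\circ s_n\circ a)$ shows that $\Psi_h(C)-\Psi_{h'}(C) = p\circ s_n\circ a$ factors through the projective $P_{n+1}$. Respect for $\approx$ is immediate: composing a morphism $C\to C'$ in $\K_n(A,B)$ with a lift $C'\to P$ produces a lift of $C$, so $\Psi(C)=\Psi(C')$.

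Next I must show that $\Phi$ itself descends to $\underline{\Hom}(A,\Omega^nB)\to K_n(A,B)$, which is the step I expect to be the main obstacle because there is no obvious direct morphism between $\Phi(f)$ and $\Phi(g)$. If $f-g=\pi\sigma$ with $\sigma:A\to Q$ and $\pi:Q\to\Omega^nB$ for some projective $Q$, I plan to interpolate $\Phi(f)$ and $\Phi(g)$ via the mapping-cylinder-like complex
\[
C_0:\quad A\xrightarrow{(ig,\,\sigma)} P_n\oplus Q \xrightarrow{(d_n,\,0)} P_{n-1}\to\cdots\to P_1\to B,
\]
which is a chain complex because $d_n\circ ig = 0$. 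The projection $(x,q)\mapsto x$ is a morphism $C_0\to\Phi(g)$, while $(x,q)\mapsto x+i\pi(q)$ is a morphism $C_0\to\Phi(f)$: commutativity at $P_{n-1}$ uses $d_n i=0$, and at $A$ one computes $ig+i\pi\sigma = i(g+\pi\sigma) = if$. Therefore $\Phi(g)\sim C_0\sim\Phi(f)$ in $K_n(A,B)$, so $\Phi$ is well-defined on stable Hom classes.

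It remains to check that $\Psi$ and $\Phi$ are mutually inverse and that the resulting bijection is natural in $G$. For $f:A\to\Omega^nB$ the canonical lift of $\Phi(f)$ to $P$ is the identity on each $P_i$, which yields $\Psi\Phi(f)=f$. Conversely, given $C\in\K_n(A,B)$ with $\Psi(C)=f$, the lift $h_\bullet$ used to define $f$ is itself a morphism $C\to\Phi(f)$ in $\K_n(A,B)$, since by construction $h_n a = if$ at the left endpoint and $h_\bullet$ covers the identity on $B$ at the right; hence $C\sim\Phi(f)$. Naturality in $G$ is automatic: every ingredient---restriction of $kG$-modules, projective resolutions, and the constructions $\Phi,\Psi$---is compatible with precomposition along group homomorphisms, so $\Phi$ assembles into a natural bijection.
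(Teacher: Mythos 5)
Your proof is correct and takes essentially the same route as the paper: the same mapping-cylinder complex $A\to P_n\oplus Q\to\cdots$ to show $\Phi$ descends to stable classes, and the same lift-and-factor construction for $\Psi$, with independence of the lift established by a chain-homotopy argument that is exactly the content of the paper's auxiliary Proposition~\ref{liftindeplemma} (which the paper states separately and leaves unproved). The only difference is the order in which the steps appear and that you spell out the homotopy computation directly rather than citing a lemma.
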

To prove this, we need the following proposition.
\begin{lemma} \label{liftindeplemma}
Suppose we are given two finite chain complexes $A=(0\rightarrow A_{n+1} \rightarrow \dots \rightarrow A_{0} \rightarrow 0)$ and $B=(0 \rightarrow B_{n+1} \rightarrow \dots \rightarrow  B_{0} \rightarrow 0)$, where $A_i$ is projective for $i=1,2,\dots,n$, and $B$ is exact. Let $f,g:A\rightarrow B$ be chain maps satisfying $f_{0}=g_{0}:A_{0}\rightarrow B_{0}$. Then the classes of $f_{n+1}$ and $g_{n+1}$ in $\underline{\Hom}_{kG}(A_{n+1},B_{n+1})$ are the same.
\end{lemma}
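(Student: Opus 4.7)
The plan is to run the standard inductive construction of a chain null-homotopy for $h := f-g$, but to track carefully what happens at the top degree, where exactness of $B$ combined with the vanishing $B_{n+2}=0$ forces an identity rather than merely producing another choice of homotopy.

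First I would set $s_{-1}=0$ and $s_0 = 0:A_0\to B_1$; the hypothesis $f_0=g_0$ gives $h_0=0$, so the homotopy identity $h_0 = \partial s_0 + s_{-1}\partial$ holds. Next, I inductively construct $s_i:A_i\to B_{i+1}$ for $i=1,2,\dots,n$ satisfying $\partial s_i + s_{i-1}\partial = h_i$. For the inductive step, assume $s_{i-1}$ is already defined; the map $h_i - s_{i-1}\partial:A_i\to B_i$ is a cycle, since
\[ \partial(h_i - s_{i-1}\partial) = h_{i-1}\partial - (h_{i-1} - s_{i-2}\partial)\partial = 0 \]
using that $h$ is a chain map and the induction hypothesis. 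Exactness of $B$ at $B_i$ realises this cycle as $\partial$ of something in $B_{i+1}$, and projectivity of $A_i$ (which holds precisely in the range $1\le i\le n$) lifts it to the desired $s_i$.

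Now I turn to degree $n+1$, where the target module $B_{n+2}=0$ is trivial, so $\partial:B_{n+1}\to B_n$ is injective. The same calculation as above shows $\partial(h_{n+1} - s_n\partial) = 0$, and injectivity forces the equality
\[ h_{n+1} = s_n \circ \partial : A_{n+1} \longrightarrow A_n \longrightarrow B_{n+1}. \]
Since $A_n$ is projective by hypothesis, this exhibits $h_{n+1} = f_{n+1} - g_{n+1}$ as factoring through a projective $kG$-module, which is exactly the condition for $[f_{n+1}]=[g_{n+1}]$ in $\underline{\Hom}_{kG}(A_{n+1},B_{n+1})$.

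There is no genuine obstacle here: the proof is a controlled version of the usual ``chain maps lifting the same map on the base are chain homotopic'' argument from homological algebra. The only subtlety worth flagging is making sure that one truly uses \emph{injectivity} of $\partial:B_{n+1}\to B_n$ at the final step (rather than merely exactness further down), which is what converts the would-be chain homotopy datum into an actual factorisation through the projective $A_n$.
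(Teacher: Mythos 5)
Your proof is correct, and it is precisely the ``standard homological algebra'' argument the paper has in mind (the paper omits the proof entirely). The inductive construction of the null-homotopy $s_\bullet$ in degrees $0\le i\le n$ using exactness of $B$ and projectivity of $A_i$, followed by the observation that injectivity of $\partial:B_{n+1}\to B_n$ (from $B_{n+2}=0$) turns the top-degree homotopy identity into the factorisation $h_{n+1}=s_n\circ\partial$ through the projective $A_n$, is exactly the intended argument; every step checks out.
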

The proof is standard homological algebra, and we omit it.

\begin{proof}[Proof of Theorem \ref{ktheorem}:]
As a first step, we show that $\Phi$ induces a map $\underline{\Hom}_{kG}(A,\Omega^n B)\rightarrow K_n(A,B)$. Suppose we are given $f'\in\Hom_{kG}(A,\Omega^n B)$ such that $f'-f$ factors through some projective module $R$:
\[ \xymatrix{
  f'-f: & A \ar[r]^{u} & R \ar[r]^{w} & \Omega^n B } \]
Then the complexes $\Phi(f)$ and $\Phi(f')$ differ in their first map only; let us denote these by $\alpha, \alpha':A\rightarrow P_n$, respectively. From the commutative diagram
\[
\xymatrix{
 A \ar@{=}[d]\ar[r]^{\alpha} & P_n \ar[r] & P_{n-1} \ar[r]\ar@{=}[d] & P_{n-2} \ar[r]\ar@{=}[d] & \dots \ar[r] & B \ar@{=}[d] \\
 A \ar@{=}[d]\ar[r]^-{(\begin{smallmatrix} \alpha & u \end{smallmatrix})} & P_n\oplus R \ar[u]_{\left(\begin{smallmatrix} \id \\ 0 \end{smallmatrix}\right)} \ar[r]^-{\left(\begin{smallmatrix} \partial \\ 0 \end{smallmatrix}\right)} \ar[d]^{\left(\begin{smallmatrix} \id \\ \partial\circ w \end{smallmatrix}\right)} & P_{n-1} \ar[r]\ar@{=}[d] & P_{n-2} \ar[r]\ar@{=}[d] & \dots \ar[r] & B \ar@{=}[d] \\
 A  \ar[r]^{\alpha'} & P_n \ar[r] & P_{n-1} \ar[r] & P_{n-2}\ar[r] & \dots \ar[r] & B }
\]
we get that $\Phi(f)\sim\Phi(f')$. Therefore, we obtain a map $\underline{\Hom}_{kG}(A,\Omega^n B)\rightarrow K_n(A,B)$ which we also denote by $\Phi$. 

To construct an inverse for $\Phi$, start with some object $C=(A\rightarrow Q_*\rightarrow B)\in \K_n(A,B)$. Since the $Q_i$'s are projective and \eqref{presoB} is exact, we can lift the identity on $B$ to a map of chain complexes $f:C\rightarrow P$:
\[
\xymatrix{
  A \ar[r]\ar[d] &  Q_n \ar[r]\ar[d] &  Q_{n-1} \ar[r]\ar[d] &  \dots \ar[r] & Q_1 \ar[r]\ar[d] & B \ar@{=}[d] \\
 \Omega^n B \ar[r] &  P_n \ar[r] &  P_{n-1} \ar[r] &  \dots \ar[r] & P_1 \ar[r] & B } 
\]
By Proposition~\ref{liftindeplemma}, the stable class of the resulting map $f_{n+1}:A\rightarrow \Omega^n B$ is independent of the choice of the lift; let us write $\Psi(C)=f_{n+1}\in\underline{\Hom}_{kG}(A,B)$. Suppose we are given a morphism $g:C'\rightarrow C$ in $\K_n(A,B)$. Then $f\circ g$ is a lift of the identity on $B$ to a map of chain complexes $C'\rightarrow P$. Since $g_{n+1}=\id_A$, we have $\Psi(C')=(f\circ g)_{n+1}=f_{n+1}=\Psi(C)$. Therefore, we have constructed a map $\Psi:K_n(A,B) \rightarrow \underline{\Hom}_{kG}(A,\Omega^n B)$. The proofs of $\Psi \circ \Phi = \id$ and $\Phi\circ \Psi=\id$ are immediate.
\end{proof}

\begin{example}\label{tateelementexample}
Suppose that $p$ divides the order of the group $G$. Then it is known that $\hat{H}^{-1}(G)\cong\underline{\Hom}_{kG}(k,\Omega k)$ is isomorphic to $k$. Under the bijection of Theorem~\ref{ktheorem}, a canonical generator of that vector space is given by the complex
\[ \xymatrix@C=35pt{k \ar[r]^-{\sum_{g\in G} g} & kG \ar[r]^-{\epsilon} & k} \]
where $\epsilon$ is the augmentation of $kG$. 
\end{example}

\begin{lemma}\label{triproductlemma}
Suppose we have a commutative diagram
\[
\xymatrix{
 &  A \ar[r] \ar[d]_{g} & P_n \ar[r] \ar[d] &  \dots \ar[r] & P_1 \ar[r] \ar[d] & B \ar[d]^{f} \\
0 \ar[r] &  D \ar[r] & E_n \ar[r] & \dots \ar[r] & E_1 \ar[r] & C  \ar[r] & 0}
\]
in $\usmod kG$. Assume further that the $P_i$'s are projective, so that the upper row represents some element $\alpha\in \underline{\Hom}_{kG}(A,\Omega^n B)$, and assume that the lower row is exact, therefore representing some element $\beta\in \underline{\Hom}_{kG}(\Omega^n C,D)$. Then the diagram
\[
 \xymatrix{
   A \ar[r]^-{\alpha} \ar[d]_{g} & \Omega^n B \ar[d]^{\Omega^n f} \\
   D & \Omega^n C \ar[l]^-{\beta} } \]
commutes stably.
\end{lemma}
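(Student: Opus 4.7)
My plan is to apply Proposition~\ref{liftindeplemma} to the top and bottom rows of the given diagram, regarded as chain complexes of length $n+2$. The hypothesis supplies one chain map between them whose $0$-component is $f$ and whose $(n+1)$-component is $g$. I will build a second such chain map, with $0$-component $f$ but $(n+1)$-component stably equal to $\beta\circ \Omega^n f\circ \alpha$. Because the top row has projective modules in positions $1,\ldots,n$ and the bottom row is exact (both by hypothesis), Proposition~\ref{liftindeplemma} will then force the two $(n+1)$-components to coincide in $\underline{\Hom}_{kG}(A,D)$, which is exactly the claim.

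To construct the second chain map, I fix projective resolutions truncated as $0\to \Omega^n B\to P^B_n\to\cdots\to P^B_1\to B\to 0$ and $0\to \Omega^n C\to Q^C_n\to\cdots\to Q^C_1\to C\to 0$, and compose three chain maps:
\begin{enumerate}
\item[(i)] a lift of $\id_B$ to a chain map from the top row to the $P^B$-resolution, which exists because the $P_i$ are projective. Its $(n+1)$-component represents $\alpha$ by the construction of the inverse map $\Psi$ in the proof of Theorem~\ref{ktheorem}.
\item[(ii)] a lift of $f\colon B\to C$ to a chain map $\tilde f\colon P^B_\bullet\to Q^C_\bullet$ of projective resolutions. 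Its restriction $\Omega^n B\to \Omega^n C$ is by definition $\Omega^n f$.
\item[(iii)] a lift of $\id_C$ to a chain map from the $Q^C$-resolution to the bottom row, which exists because the $Q^C_i$ are projective and the bottom row is exact. Its $(n+1)$-component represents $\beta$ under the standard Yoneda identification $\Ext^n_{kG}(C,D)\cong \underline{\Hom}_{kG}(\Omega^n C,D)$.
\end{enumerate}
The composite has $0$-component $f$ and $(n+1)$-component stably $\beta\circ \Omega^n f\circ \alpha$, as desired.

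The main obstacle is the verification that the three $(n+1)$-components really do represent $\alpha$, $\Omega^n f$, and $\beta$ in the stable category. The first is essentially the content of $\Psi$ in Theorem~\ref{ktheorem}. The second is a direct consequence of the definition of $\Omega^n$ on a morphism in the stable module category. The third is the standard correspondence between $n$-fold extensions and stable maps $\Omega^n C\to D$, obtained by lifting the identity of the source along any projective resolution. Given these identifications, Proposition~\ref{liftindeplemma} finishes the argument.
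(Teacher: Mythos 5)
Your proof is correct and is essentially the same as the paper's: both construct a second chain map from the top row to the bottom row by composing three lifts (along a projective resolution of $B$, then one of $C$) whose $(n+1)$-components represent $\alpha$, $\Omega^n f$, and $\beta$, and then invoke Proposition~\ref{liftindeplemma} to identify its $(n+1)$-component stably with $g$.
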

\begin{proof}
Choose projective resolutions $\Omega^n B \rightarrow Q_* \rightarrow B$ and $\Omega^n C\rightarrow R_*\rightarrow C$. By the usual `projective to acyclic'-argument, we get a diagram
\[ \xymatrix@C=15pt@R=15pt{
  A \ar[r] \ar[d]_{\bar{\alpha}} & P_* \ar[r] \ar[d] & B \ar@{=}[d] \\
  \Omega^n B \ar[r] \ar[d]_{\Omega^n f} & Q_* \ar[r] \ar[d] & B \ar[d]^{f} \\
  \Omega^n C \ar[r] \ar[d]_{\bar{\beta}} & R_* \ar[r] \ar[d] & C \ar@{=}[d] \\
  D \ar[r] & E_* \ar[r] & C } \]
where $\bar{\alpha}$ and $\bar{\beta}$ are unstable representatives of $\alpha$ and $\beta$, respectively. The result follows from Proposition~\ref{liftindeplemma}. 
\end{proof}

\begin{remark}\label{isomorphismremark}
Suppose we have an exact sequence $A\hookrightarrow P_n\rightarrow\dots\rightarrow P_1\twoheadrightarrow B$ with projective modules $P_1,\dots,P_n$. Then we can view this as an extension representing some stable isomorphism $\Omega^n B\rightarrow A$; but we can also consider this as an element of $\K_n(A,B)$, representing some stable isomorphism $A\rightarrow \Omega^n B$; by the previous proposition, the two maps are stable inverses of each other.
\end{remark}

We have a composition product $\K_n(B,C)\times \K_m(A,B)\rightarrow \K_{n+m}(A,C)$ similar to the Yoneda splice: given $E:A\rightarrow P_*\rightarrow B$ and $E':B\rightarrow Q_*\rightarrow C$ we define $E'\circ E$ to be the complex
\[ \xymatrix@C=10pt@R=5pt{
E'\circ E: & A \ar[r] & P_* \ar[rr]\ar[dr] & & Q_* \ar[r] & C. \\
& & & B\ar[ur] } \]
This product is compatible with the equivalence relation $\sim$ and therefore induces a product
\[ K_n(B,C) \times K_m(A,B) \rightarrow K_{n+m}(A,C). \]
\begin{lemma} 
The composition products on $K_*$ and $\tate^{-*}_{kG}$ coincide under the bijection of Theorem~\ref{ktheorem}.
\end{lemma}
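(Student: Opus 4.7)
The plan is to exhibit a direct morphism in $\K_{m+n}(A,C)$ between the two representatives. Suppose $f \in \Hom_{kG}(A, \Omega^m B)$ and $g \in \Hom_{kG}(B, \Omega^n C)$ represent classes in $\tate^{-m}(A,B)$ and $\tate^{-n}(B,C)$ respectively. Their composition in Tate cohomology is represented by $(\Omega^m g)\circ f : A \to \Omega^{m+n} C$, where $\Omega^m g$ is constructed by lifting $g$ through projective resolutions. To make this explicit, fix a projective resolution
\[ 0 \to \Omega^m B \xrightarrow{i} P_m \to \dots \to P_1 \to B \to 0 \]
together with an extended projective resolution of $C$ of length $m+n$,
\[ 0 \to \Omega^{m+n} C \xrightarrow{j'} Q_{m+n} \to \dots \to Q_{n+1} \to Q_n \to \dots \to Q_1 \to C \to 0, \]
so that $\Omega^n C = \ker(Q_n \to Q_{n-1})$ is the target of $g$ and $j:\Omega^n C \hookrightarrow Q_n$ is the inclusion used to form $\Phi(g)$. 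These choices compute $\Phi(f)$, $\Phi(g)$, and $\Phi((\Omega^m g)\circ f)$ simultaneously.

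The key step is to lift $g$. Since each $P_i$ is projective and the bottom sequence $Q_{n+m} \to \dots \to Q_{n+1} \to \Omega^n C \to 0$ is exact, $g$ lifts to a chain transformation $\widetilde{g}_* : P_* \to Q_{n+*}$ covering $g$. The induced map on the top kernels is by construction $\Omega^m g$, so $\widetilde{g}_m\circ i = j'\circ\Omega^m g$. Now assemble the morphism in $\K_{m+n}(A,C)$ from the Yoneda splice
\[ A \xrightarrow{i\circ f} P_m \to \dots \to P_1 \to B \xrightarrow{j\circ g} Q_n \to \dots \to Q_1 \to C \]
to the standard representative
\[ A \xrightarrow{j'\circ(\Omega^m g)\circ f} Q_{m+n} \to \dots \to Q_{n+1} \to Q_n \to \dots \to Q_1 \to C, \]
whose vertical arrows are $\id_A$, the lift components $\widetilde{g}_i$ for $1\leq i\leq m$, identities on $Q_1,\dots,Q_n$, and $\id_C$. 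The interior squares commute because $\widetilde{g}$ is a chain map and it lifts $g$ at the bottom; the first square commutes because $\widetilde{g}_m\circ i\circ f = j'\circ(\Omega^m g)\circ f$, which is precisely the identity $\widetilde{g}_m\circ i = j'\circ\Omega^m g$ postcomposed with $f$.

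The main obstacle is purely organizational: choosing compatible projective resolutions so that $\Omega^m g$ is visible as the top kernel map of a concrete chain lift, and noting that $K_{m+n}(A,C)$ is insensitive to these choices (which is built into Theorem~\ref{ktheorem}). No further homological subtlety appears, since the problem reduces to converting a chain-level lift that computes composition in $\underline{\Hom}$ into a morphism of the type prescribed by $\K_{m+n}(A,C)$. Once the morphism above is in hand, the equality $\Phi(g)\circ\Phi(f) = \Phi((\Omega^m g)\circ f)$ in $K_{m+n}(A,C)$ is immediate, which is the statement of the lemma.
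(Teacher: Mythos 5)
Your proof is correct, but it is organized differently from the paper's. The paper works with the map $\Psi:K_{m+n}(A,C)\to\underline{\Hom}_{kG}(A,\Omega^{m+n}C)$ and shows by a calculation (lifting $\id_C$ twice, invoking Proposition~\ref{triproductlemma} and then Proposition~\ref{liftindeplemma}) that $\Psi$ of the Yoneda splice equals $\beta\alpha$. You instead argue entirely on the $\Phi$ side: you directly exhibit a single morphism in $\K_{m+n}(A,C)$ from the splice $\Phi(g)\circ\Phi(f)$ to the standard representative $\Phi((\Omega^m g)\circ f)$, built by lifting $g:B\to\Omega^n C$ through the tail $Q_{m+n}\to\dots\to Q_{n+1}\twoheadrightarrow\Omega^n C$ of a chosen length-$(m+n)$ resolution of $C$ and reading off the induced map on top kernels as a concrete representative of $\Omega^m g$. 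The underlying mechanism (lifting a map into an exact complex via projectivity of the $P_i$) is the same, and you do implicitly rely on the well-definedness of $\Phi$ established in Theorem~\ref{ktheorem}; but your route bypasses Propositions~\ref{liftindeplemma} and~\ref{triproductlemma} entirely and is thus more self-contained for this particular statement, at the cost of not foreshadowing Proposition~\ref{negcomposlemma}, which reuses the same two-step lifting pattern the paper sets up here. One small point of care: ``$\Omega^m g$'' is only well-defined up to stable equivalence, so the clean way to phrase the leftmost square is that the restriction of $\widetilde{g}_m$ to $\ker(P_m\to P_{m-1})=\Omega^m B$ \emph{is} the representative of $\Omega^m g$ you use to form $\Phi((\Omega^m g)\circ f)$ — which is what you are implicitly doing, but is worth stating explicitly since the whole lemma is about matching specific representatives.
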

\begin{proof}
Let us start with complexes $A\rightarrow P_*\rightarrow B$ and $B\rightarrow Q_*\rightarrow C$ representing stable maps $\alpha:A\rightarrow \Omega^m B$ and $\beta:B\rightarrow \Omega^n C$, respectively. Choose projective resolutions $\Omega^n C\rightarrow R_*\rightarrow C$ and $\Omega^{n+m} C\rightarrow T_*\rightarrow \Omega^n C$. Then we can lift the identity map on $C$ to commutative diagrams as follows:
\[ \xymatrix{
 A \ar[r] \ar[d]_{\bar{\gamma}} & P_*\ar[r] \ar[r] \ar[d] & B \ar[d]^{\bar{\beta}} & & B \ar[r]\ar[d]_{\bar{\beta}} & Q_* \ar[d] \ar[r] & C \ar@{=}[d] \\
 \Omega^{n+m}C \ar[r] & T_* \ar[r] & \Omega^n C & & \Omega^n C \ar[r] &  R_* \ar[r] & C  } \]
Here, $\bar{\beta}$ and $\bar{\gamma}$ are unstable representatives of $\beta$ and some $\gamma$. Note that the extension $\Omega^{n+m}C \rightarrow T_*\rightarrow \Omega^n C$ represents the identity map $\id\in \underline{\Hom}_{kG}(\Omega^m \Omega^n C,\Omega^{n+m}C)$. By Proposition~\ref{triproductlemma}, the left diagram shows that $\gamma = \beta \alpha$. After splicing the two diagrams the result follows from Proposition~\ref{liftindeplemma}.
\end{proof}
There is also a way of composing an element $x\in\tate^{-n}_{kG}(A,B)$ given as a complex $A\rightarrow P_*\rightarrow B$ with an element of $y\in\tate^m_{kG}(B,C)$ (with $m>0$) given as an extension $C\hookrightarrow M_*\twoheadrightarrow B$: 
\begin{lemma} \label{negcomposlemma}
Suppose $m<n$. The identity map of $B$ can be lifted to a diagram
\[
\xymatrix{
 A\ar[r] & P_n\ar[r] & \dots \ar[r] & P_{m+1}\ar[d]\ar[r] & P_{m}\ar[r]\ar[d] & \dots \ar[r] & P_1\ar[r]\ar[d] & B \ar@{=}[d] \\
 & & & C \ar[r] & M_m \ar[r] & \dots \ar[r] & M_1 \ar[r] & B }
\]
and for any such lifting, the complex $\xymatrix@C=15pt{ A \ar[r] & P_n\ar[r] & \dots \ar[r] & P_{m+1}\ar[r] & C}$ re\-pre\-sents the composition $y\cdot x\in\smash[t]{\tate^{m-n}_{kG}(A,C)}$.
\end{lemma}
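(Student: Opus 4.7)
The existence of the lift is standard homological algebra: proceed inductively from right to left, using projectivity of each $P_{i+1}$ together with the fact that $M_{i+1}$ surjects onto $\ker(M_i \to M_{i-1})$ to construct $\phi_{i+1}: P_{i+1} \to M_{i+1}$, and at the final stage lift along $C = \ker(M_m \to M_{m-1})$ to obtain $\phi_{m+1}: P_{m+1} \to C$.

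To identify the stable class of the truncated complex with $y \cdot x$, the plan is to splice the given extension with a truncated projective resolution of $C$ and invoke Proposition~\ref{triproductlemma}. Pick a truncated projective resolution
\[ 0 \to \Omega^{n-m}C \to R_{n-m-1} \to \cdots \to R_0 \to C \to 0 \]
of length $n-m$, and splice it onto the given length-$m$ extension representing $y$ to form an exact sequence
\[ 0 \to \Omega^{n-m}C \to R_{n-m-1} \to \cdots \to R_0 \to M_m \to \cdots \to M_1 \to B \to 0 \]
of length $n$. Since Yoneda-splicing a length-$m$ extension with a length-$(n-m)$ truncated projective resolution implements the canonical loop-shift isomorphism $\underline{\Hom}(\Omega^m B, C) \cong \underline{\Hom}(\Omega^n B, \Omega^{n-m}C)$, this spliced sequence represents $\Omega^{n-m}(y)$. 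Projectivity of the $P_i$ now allows me to extend $\phi$ further to the left: first lift $\phi_{m+1}$ along $R_0 \twoheadrightarrow C$ to $\tilde\phi_{m+1}: P_{m+1} \to R_0$, then inductively build $\tilde\phi_i: P_i \to R_{i-m-1}$ for $m+2 \leq i \leq n$, and finally obtain $g: A \to \Omega^{n-m}C$. The resulting chain map lifts $\mathrm{id}_B$, so Proposition~\ref{triproductlemma} yields $g = \Omega^{n-m}(y) \circ x = y \cdot x$ stably.

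On the other hand, by Theorem~\ref{ktheorem} the stable class of the truncated complex $A \to P_n \to \cdots \to P_{m+1} \xrightarrow{\phi_{m+1}} C$ is computed by lifting $\mathrm{id}_C$ to a chain map into the length-$(n-m)$ projective resolution of $C$ and reading off the leftmost map; the family $\tilde\phi_{m+1},\dots,\tilde\phi_n, g$ constitutes exactly such a lift (with compatibility $R_0 \to C$ followed by $\tilde\phi_{m+1}$ equal to $\phi_{m+1}$ built into its definition), so the truncated complex represents $g = y \cdot x$. Independence of the choice of the original lift $\phi$ and of the auxiliary maps $\tilde\phi_i$ follows from Proposition~\ref{liftindeplemma}. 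The one bookkeeping point that needs explicit checking is the identification of the spliced exact sequence with $\Omega^{n-m}(y)$, i.e.\ the assertion that Yoneda-splicing with a truncated projective resolution realises loop-shifting of stable maps; this is standard dimension-shifting but is the main place where one must be careful.
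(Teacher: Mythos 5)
Your proof follows essentially the same route as the paper's: splice a length-$(n-m)$ truncated projective resolution of $C$ with the given length-$m$ extension to obtain an exact sequence of length $n$ representing $\Omega^{n-m}(y) \in \underline{\Hom}(\Omega^n B, \Omega^{n-m}C)$, extend the lifting over the resolution part (using projectivity of the $P_i$), and then apply Proposition~\ref{triproductlemma} to conclude that the resulting map $A\to\Omega^{n-m}C$ is $y\cdot x$, finally reading off via Theorem~\ref{ktheorem} that the truncated complex represents that class. The one point you flag—that splicing a projective resolution onto the extension realises the loop-shift isomorphism $\underline{\Hom}(\Omega^m B,C)\cong\underline{\Hom}(\Omega^n B,\Omega^{n-m}C)$—is indeed the crux, and the paper likewise relies on it silently when it asserts that "the bottom row represents $y\in\underline{\Hom}_{kG}(\Omega^n B,\Omega^{n-m}C)$"; you are right that it is standard dimension-shifting, and calling it out does no harm.
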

\begin{proof}
Existence of the lifting is common homological algebra. For the second statement choose a projective resolution $\Omega^{n-m}C\rightarrow R_*\rightarrow C$; then we have the following commutative diagram:
\[ \xymatrix@R=12pt@C=12pt{
 A \ar[r] \ar[dd]^{\bar{\gamma}} & P_n \ar[r]\ar[dd] & \dots \ar[r] & P_{m+1} \ar[dr] \ar[rr]\ar[dd]  & & P_{m} \ar[r]\ar[dd] & \dots \ar[r] & P_1 \ar[r]\ar[dd] & B \ar@{=}[dd] \\
  & & & & C \ar[dr] \\
 \Omega^{n-m} C \ar[r] & R_{n-m} \ar[r] & \dots \ar[r] & R_{1} \ar[ur] \ar[rr] & & M_m \ar[r] & \dots \ar[r] & M_1 \ar[r] & B } \]
The complex in question represents the stable class of the map $\bar{\gamma}$. The bottom row represents $y\in\underline{\Hom}_{kG}(\Omega^n B,\Omega^{n-m}C)$, the upper row represents $x\in\underline{\Hom}_{kG}(A,\Omega^n B)$. The result follows from Proposition~\ref{triproductlemma}.
\end{proof}

\begin{lemma}\label{composnegtwoways}
Suppose that we have a commutative diagram
\[ \xymatrix{
   A \ar[r]\ar[d]_{f} & P_n\ar[r]\ar[d] & \dots \ar[r] & P_1\ar[r]\ar[d] &  B\ar[d]^{g} \\
   A' \ar[r] & Q_n\ar[r] & \dots \ar[r] & Q_1\ar[r] &  B'} \]
with projective modules $P_i,Q_i$ for $i=1,2,\dots,n$. Then the rows represent maps $x:A\rightarrow \Omega^n B$ and $y:A'\rightarrow \Omega^n B'$, respectively, and $y\circ f = \Omega^n(g) \circ x$ in $\stmod kG$.
\end{lemma}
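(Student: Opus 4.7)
The plan is to use the characterization of the rows via Theorem~\ref{ktheorem} together with the uniqueness statement of Proposition~\ref{liftindeplemma}. I will build two three-story lifts of the same map and deduce stable equality of the top floors.

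First I would fix projective resolutions $\Omega^n B \hookrightarrow P'_n \to \dots \to P'_1 \twoheadrightarrow B$ and $\Omega^n B' \hookrightarrow Q'_n \to \dots \to Q'_1 \twoheadrightarrow B'$. Lifting the identity of $B$ to a chain map from the top row of our diagram into the first resolution yields, on the leftmost terms, an unstable representative $\bar{x} : A \to \Omega^n B$ of the class $x$; lifting the identity of $B'$ from the bottom row into the second resolution gives an unstable representative $\bar{y} : A' \to \Omega^n B'$ of $y$. Separately, lifting $g: B \to B'$ to a chain map between the two fixed resolutions produces on the leftmost term a map $\bar{\phi} : \Omega^n B \to \Omega^n B'$, whose stable class is by definition $\Omega^n(g)$.

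Now I stack these lifts in two different ways to get two chain maps from the top row to the resolution of $B'$. The first stack is (top row) $\to$ (resolution of $B$) $\to$ (resolution of $B'$), using the lift of $\id_B$ followed by the lift of $g$; its leftmost arrow is $\bar{\phi}\circ\bar{x}$, and its rightmost arrow is $g : B \to B'$. The second stack is (top row) $\to$ (bottom row) $\to$ (resolution of $B'$), using the given commutative diagram followed by the lift of $\id_{B'}$; its leftmost arrow is $\bar{y}\circ f$, and its rightmost arrow is again $g : B \to B'$. The resolution of $B'$ is exact, and the top row is a complex of projectives in its middle positions, so Proposition~\ref{liftindeplemma} applies to the difference of these two chain maps and forces $\bar{\phi}\circ\bar{x}$ and $\bar{y}\circ f$ to agree in $\underline{\Hom}_{kG}(A,\Omega^n B')$. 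Passing to stable classes gives $\Omega^n(g)\circ x = y\circ f$, as required.

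The only genuinely delicate point is lining up Proposition~\ref{liftindeplemma} correctly: one has to check that both stacked chain maps really do agree on the rightmost term (they do, both equal $g$), and that the middle columns consist of projectives so that the proposition applies with targets the exact resolution of $B'$. Everything else is routine diagram chasing and the standard fact that the lift of $g$ between fixed resolutions represents $\Omega^n(g)$ independently of the choice of lift.
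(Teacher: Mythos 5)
Your proof is correct and rests on the same idea as the paper's: compare two chain-map lifts and invoke uniqueness of lifts. The paper is slightly more economical—it stacks the given two-story diagram on a single lift $Q_*\to R_*$ of $\id_{B'}$ and cites Proposition~\ref{triproductlemma} (which internally packages exactly the comparison you make), whereas you unfold that step and apply Proposition~\ref{liftindeplemma} directly; the difference is cosmetic.
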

\begin{proof}
Choose a projective resolution $\Omega^n B'\rightarrow R_*\rightarrow B'$. By usual homological algebra, we get a diagram
\[
\xymatrix@R=15pt{
 A \ar[r] \ar[d]_{f'} & P_* \ar[r] \ar[d] & B \ar[d]^{g} \\
 A' \ar[r] \ar[d]_{y} & Q_* \ar[r] \ar[d] & B' \ar@{=}[d] \\
 \Omega^n B' \ar[r] & R_* \ar[r] & B'
} \]
and then the result follows from Proposition~\ref{triproductlemma}.
\end{proof}

\begin{remark} \label{compospostwoways}
There is a similar statement for extensions. Suppose that we have a diagram as in Proposition~\ref{composnegtwoways}, but this time with exact rows and the $P_i$'s and $Q_i$'s are not necessarily projective. Then the rows represent maps $x:\Omega^n B\rightarrow A$ and $y:\Omega^n B'\rightarrow A'$, and $f\circ x = y\circ \Omega^n(g)$ in $\stmod kG$.
\end{remark}

\begin{remark} \label{complexcohomologyclass} 
If $k\rightarrow I_0\rightarrow I_1\rightarrow\dots\rightarrow I_j$ is an injective resolution and $\lambda:I_j\rightarrow k$ represents some cohomology class in $\hat{H}^{-1-j}(G)\cong H^{j} \Hom(I,k)$, then the complex $k\rightarrow I_0\rightarrow\dots\rightarrow I_j\xrightarrow{\lambda} k$ represents the same class.
\end{remark}
 
%------------------------------------------------------------------------------

\section{The Evens norm map and the dual operations}\label{sectionEvensNorm}
In this part we are going to show that some of the dual operations $Q_i^*$ on ordinary group cohomology are compatible with the Evens norm map in certain cases. For simplicity we restrict to the case $p=2$. Recall (see e.g.~\cite{benson2}, \S~4.1) that the Evens norm map is a function
\[ \norm_{K,G}: H^i(K)\rightarrow H^{ni}(G) \]
for all $i\geq 0$, where $G$ is a finite group and $K\leq G$ is a subgroup of index $n$. It can be defined as follows: let $x\in H^i(K)=\Ext^i_{kK}(k,k)$ be represented by an exact sequence $k=E_i\rightarrow E_{i-1}\rightarrow\dots\rightarrow E_0\rightarrow k$, which we think of as an augmented complex $E\rightarrow k$. Then define $x^{\otimes n}$ to be the augmented complex $E^{\otimes n}\rightarrow k$, which is an exact sequence of $k(\Sigma_n\wr K)$-modules. It therefore represents some class in $H^{ni}(\Sigma_n\wr K)$. We then choose a suitable inclusion $\iota:G\hookrightarrow \Sigma_n\wr K$ and define $\norm_{K,G}(x) = \iota^*(x^{\otimes n})$.

In the following, we will often implicitly identify $H^*(G)$ with the dual of $\hat{H}^{-1-*}(G)$ by the use of Tate duality. In particular, we have dual operations $Q_i^*:H^{i+j}(G)\rightarrow H^j(G)$.
\begin{thm} \label{dualQandTensor} Let $k=\Ff_2$, and let $K$ be a subgroup of index $n$ of a finite group $G$. 
\begin{enumerate} 
 \item \label{dualQandTensorA} For all $i\geq 0$ the diagram
        \[ \xymatrix{ H^i(K) \ar[r]^{Q_i^*} \ar[d]_{(-)^{\otimes n}} &  H^0(K) \ar[d]^{(-)^{\otimes n}} \\
                      H^{ni}(\Sigma_n\wr K) \ar[r]^{Q_{ni}^*} & H^{0}(\Sigma_n\wr K) } \]
       commutes.
 \item \label{dualQandTensorB} If $K$ is a central factor of $G$ (e.g.,~a central subgroup or a direct factor), then for $x\in H^i(K)$ we have that $\norm_{K,G} Q_i^*(x) = Q_{ni}^*(\norm_{K,G} x)$.
 \item \label{dualQandTensorC} For $x\in H^i(K)$ we have that $Q_i^*(x) = Q_{ni}^*(x^n) \in H^0(K)$.
\end{enumerate}
\end{thm}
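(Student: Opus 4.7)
The plan is to prove part~(1) first by a careful chain-level argument with the $E_\infty$-structure from \S\ref{sectionOperad}, and then derive (2) and (3) by pullback.

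For part~(1), Tate duality identifies $Q_i^*(x) \in H^0(K) = k$ with the scalar $\tr_K(x \cdot Q_i(e_K))$, where $e_K \in \hat H^{-1}(K)$ is the canonical generator of Example~\ref{tateelementexample} (normalized so that $\tr_K(e_K) = 1$); similarly $Q_{ni}^*(x^{\otimes n}) = \tr_{\Sigma_n \wr K}(x^{\otimes n} \cdot Q_{ni}(e_{\Sigma_n \wr K}))$. Since $\lambda^n = \lambda$ in $\Ff_2$ for $\lambda \in H^0 = k$, the identity in (1) reduces to the scalar equality of these two traces. Working at the chain level, $P^{\boxtimes n}$ is a complete projective resolution of $k$ over $k(\Sigma_n \wr K)$, and the operad $\C$ for $\Sigma_n \wr K$ acts on $A^{\otimes n}$ where $A = \Hom^*_{kK}(P,k)$. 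I would compare the $Q_{ni}$-operation from this tensor-power operad action with the $Q_i$-operation on $A$: because $x^{\otimes n}$ has pure tensor degree $(i,\ldots,i)$, the Cartan formula and the equivariant operad structure should force only the diagonal term $Q_i(e_K)^{\otimes n}$ of the Cartan expansion to contribute to the pairing, yielding $\tr_K(x \cdot Q_i(e_K))^n = \tr_K(x \cdot Q_i(e_K))$ in $\Ff_2$.

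For part~(2), pull back the identity of (1) along $\iota \colon G \hookrightarrow \Sigma_n \wr K$ defining the Evens norm, so that $\iota^*(x^{\otimes n}) = \norm_{K,G}(x)$ and $\iota^*\bigl((Q_i^*(x))^{\otimes n}\bigr) = Q_i^*(x)$ (using $\lambda^n = \lambda$ in $\Ff_2$). The central-factor hypothesis ensures that $\iota^*$ commutes with $Q_{ni}^*$: centrality makes the conjugation action of $G/K$ on $K$ trivial and allows $\iota$ to respect the operad structure. Without centrality this breaks, by an obstruction analogous to the failure of $Q$ to commute with transfers (see the final remark of \S1).

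For part~(3), apply (1) via the composite group homomorphism $K \xrightarrow{\Delta} K^n \hookrightarrow \Sigma_n \wr K$. Diagonal pullback sends $x^{\otimes n}$ to $x^n$, and the image $\Delta(K)$ is pointwise centralized by the wreath-product $\Sigma_n$-factor, so the same centrality argument as in (2) shows that this pullback commutes with $Q_{ni}^*$. Since the relevant image in $H^0$ is a scalar in $\Ff_2$ (where $\lambda^n = \lambda$), the identity of (1) pulls back to $Q_i^*(x) = Q_{ni}^*(x^n)$ in $H^0(K) = \Ff_2$, which is (3).

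The hardest step is part~(1). Verifying that only the diagonal Cartan term contributes requires careful tracking of the $\Sigma_n$-equivariance of the operad action on $A^{\otimes n}$. A particular subtlety is that the index $n! = [\Sigma_n \wr K : K^n]$ vanishes modulo~$2$ for $n \geq 2$, so one cannot naively compare pairings on $\Sigma_n \wr K$ and $K^n$ via restriction-transfer; instead the argument must proceed intrinsically within the wreath-product structure, exploiting the pure-tensor degree of $x^{\otimes n}$ and the explicit action of $\C$ on $A^{\otimes n}$.
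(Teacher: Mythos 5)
Your high-level plan (establish part~(1) and derive (2),(3) by pullback along group homomorphisms into $\Sigma_n\wr K$) superficially matches the paper's use of $\Sigma_n\wr K$, but the key technical steps are either unjustified or false, and the paper in fact proceeds in a genuinely different way.

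First, the assertion that ``$P^{\boxtimes n}$ is a complete projective resolution of $k$ over $k(\Sigma_n\wr K)$'' is wrong: in each degree the module $(kK)^{\otimes n}$ is induced from the trivial $k\Sigma_n$-module along $\Sigma_n\hookrightarrow\Sigma_n\wr K$, and since $\Sigma_n$ has even order for $n\geq 2$, this is not a projective $k(\Sigma_n\wr K)$-module. The paper nowhere uses $P^{\boxtimes n}$ as a wreath-product resolution; instead it uses Proposition~\ref{interpretqi} to represent $Q_{ni}$ of the generator of $\hat H^{-1}(\Sigma_n\wr K)$ by the explicit complex built from $k(\Sigma_n\wr K)^{\otimes 2}$ with differential $1+T$, together with a carefully constructed chain map $\xi$ into $E^{\otimes n}$ that encodes the wreath-product structure.

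Second, and more seriously, the heart of your argument for (1) --- ``the Cartan formula and the equivariant operad structure should force only the diagonal term $Q_i(e_K)^{\otimes n}$ to contribute'' --- is not a proof and is, in substance, false in the way you formulate it. There is no Cartan/K\"unneth decomposition for $\hat H^*(\Sigma_n\wr K)$ analogous to Theorem~\ref{producttheorem} for $\hat H^*(K^n)$, precisely because $[\Sigma_n\wr K:K^n]=n!$ is even (a point you flag but do not resolve). What the paper actually shows is that $Q_{ni}^*(f^*(x^{\otimes n}))=\sum_{\sigma}c_{n,i,\sigma}\bigl(Q_i^*(x)\bigr)^n$ where every $\sigma\in\Sigma_n$ contributes a constant $c_{n,i,\sigma}$ a priori; the off-diagonal terms are not automatically zero. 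One must prove $\sum_\sigma c_{n,i,\sigma}=1$, which requires three separate facts: $c_{n,i,\id}=1$ (which is exactly part~(3), proved by an explicit $K=\Zz/2\Zz$ computation using Example~\ref{examplecyclic2}); $c_{n,i,\sigma}=0$ for every involution $\sigma\neq\id$ (proved via Proposition~\ref{technicalSigmanLemma}, a Steenrod-norm computation for the restriction along $\Zz/2\Zz\times K\to\Sigma_n\wr K$, and Example~\ref{exampleC2C2}); and $c_{n,i,\sigma}=c_{n,i,\sigma^{-1}}$ so that the remaining terms cancel in pairs mod~$2$. Note in particular that the paper establishes (3) \emph{before} (1), not afterward: it is an ingredient, not a corollary.

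Third, your derivation of (2) assumes that ``$\iota^*$ commutes with $Q_{ni}^*$'' under the centrality hypothesis, citing only that centrality trivializes the conjugation action. But the $Q_i$ are natural under restriction, so the dual operations $Q_i^*$ are natural under transfer, not restriction; nothing general lets $\iota^*$ commute with $Q_{ni}^*$. Establishing the central-factor case is the whole content of (2), and the paper does it by choosing coset representatives that commute with every involution of $K$, then showing via Proposition~\ref{LprimeIsEnough} and the vanishing $c_{n,i,\sigma}=0$ for involutions that only the elements $g\in K$ with $\pi(f(g))=\id$ contribute to the relevant sum. Remark~\ref{normNotGeneral} gives a counterexample ($\Zz/2\Zz\times\Zz/2\Zz\subset D_8$) showing that your intermediate claim, without the combinatorial bookkeeping, cannot be upgraded to a general naturality statement. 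The same issue applies to your argument for (3).

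In short, you have correctly identified the players ($\Sigma_n\wr K$, the generator of $\hat H^{-1}$, the mod-$2$ idempotency of $H^0$) and the right target statements, but the argument has genuine gaps at each step, and the paper's proof works through an explicit chain-level combinatorial apparatus (the triples $(E,\bar\beta,\bar\gamma)$, the map $\xi$, and the constants $c_{n,i,\sigma}$) that is essential and cannot be replaced by an appeal to the Cartan formula or to naturality.
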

\begin{remark}
Recall that $K$ is a central factor of $G$ if and only if the product of $K$ with its centralizer is the whole group $G$. The condition we really need for the proof of part \eqref{dualQandTensorB} is that we can choose coset representatives for $K$ in $G$ which commute with all elements of order $2$ in $K$. This is true if $K$ is a central factor, but it is also true in other cases like $\Zz/4\Zz\subseteq Q_8$. The condition is not satisfied for $\Zz/2\Zz\times \Zz/2\Zz\subset D_8$ (the dihedral group with $8$ elements), and we will see in Remark~\ref{normNotGeneral} that the conclusion fails in that case.  
\end{remark}
\begin{cor}
If the order of the finite group $G$ equals an odd multiple of $2^i$ with $i\geq 1$, then the operation $Q_n:\hat{H}^{-1}(G)\rightarrow \hat{H}^{-1-n}(G)$ is non-trivial whenever $n$ is divisible by $2^i$.
\end{cor}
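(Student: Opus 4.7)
The plan is to reduce the claim to the case of a $2$-group and then apply the Evens-norm compatibility of Theorem~\ref{dualQandTensor}\eqref{dualQandTensorB} to a central subgroup of order~$2$. Since $\hat{H}^{-1}(G)\cong k$ is one-dimensional (generated by the class~$\zeta$ of Example~\ref{tateelementexample}), by Tate duality the non-vanishing of $Q_n:\hat{H}^{-1}(G)\to \hat{H}^{-1-n}(G)$ is equivalent to the non-vanishing of the dual operation $Q_n^*:H^n(G)\to H^0(G)=k$, which is what I shall establish.

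For the reduction step, let $S\leq G$ be a Sylow $2$-subgroup, so $|S|=2^i$ and the index $[G:S]$ is odd, hence invertible in $k=\Ff_2$. The standard identity $\tr_{S,G}\circ\res_{G,S}=[G:S]\cdot\id$ shows that $\res_{G,S}$ is injective on $\hat{H}^*(G)$, so $\res_{G,S}(\zeta_G)$ is the generator~$\zeta_S$ of $\hat{H}^{-1}(S)$. Because $Q_n$ is natural with respect to the injective homomorphism $S\hookrightarrow G$, it suffices to prove $Q_n(\zeta_S)\neq 0$, and I may assume $G=S$ is a $2$-group of order~$2^i$.

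The centre $Z(S)$ of the non-trivial $2$-group $S$ is non-trivial and contains a subgroup $K\cong \Zz/2\Zz$; in particular $K$ is a central factor of~$S$, so Theorem~\ref{dualQandTensor}\eqref{dualQandTensorB} is applicable. Let $\eta\in H^1(K)$ be the generator; from Example~\ref{examplecyclic2} (under Tate duality the class~$\eta^j\in H^j(K)$ pairs to~$1$ against $Q_j(s^{-1})=s^{-1-j}$) I read off $Q_j^*(\eta^j)=1\in H^0(K)$ for every $j\geq 1$. Writing the given $n=2^i\cdot r$ with $r\geq 1$, setting $N=[S:K]=2^{i-1}$ and $j=2r$, one has $n=Nj$, and Theorem~\ref{dualQandTensor}\eqref{dualQandTensorB} yields
\[
  Q_n^*\bigl(\norm_{K,S}(\eta^j)\bigr)\;=\;\norm_{K,S}\bigl(Q_j^*(\eta^j)\bigr)\;=\;\norm_{K,S}(1)\;=\;1\;\in\; H^0(S),
\]
so $Q_n^*$ is non-zero on $H^n(S)$, which by Tate duality gives $Q_n(\zeta_S)\neq 0$ and, combined with the reduction step, proves the claim.

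The main point to watch is the indexing: the relation $n=Nj$ with $N=2^{i-1}$ forces $n$ to be divisible by $2^i$ exactly when $j$ is even, and the factorisation $n=2^{i-1}\cdot(2r)$ covers precisely the positive multiples of $2^i$ that the hypothesis allows. This parity count is the only non-obvious ingredient; everything else is a direct application of Theorem~\ref{dualQandTensor}\eqref{dualQandTensorB} together with the explicit $\hat{H}^*(\Zz/2\Zz)$ computation of Example~\ref{examplecyclic2}.
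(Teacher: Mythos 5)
Your proof is correct and follows the same strategy as the paper's: reduce to a Sylow $2$-subgroup via the transfer argument, then apply Theorem~\ref{dualQandTensor}\eqref{dualQandTensorB} to a central subgroup $K\cong\Zz/2\Zz$ together with the computation $Q_j^*\colon H^j(K)\xrightarrow{\;\cong\;}H^0(K)$ from Example~\ref{examplecyclic2}. Your careful bookkeeping with $N=[S:K]=2^{i-1}$ and $j=2r$ is in fact slightly tidier than the paper's displayed diagram (which reads $H^{2^ij}(P)$ where the norm actually lands in $H^{2^{i-1}j}(P)$), and taking odd $j$ as well would even give non-triviality for all $n$ divisible by $2^{i-1}$.
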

\begin{proof}
Let $P\leq G$ be a $2$-Sylow subgroup, which is of order $2^i$. The commutative diagram
\[ \xymatrix{ \hat{H}^{-1}(G) \ar[d]^{\res_{P,G}}_{\cong} \ar[r]^{Q_n} & \hat{H}^{-1-n}(G) \ar[d]^{\res_{P,G}} \\ 
              \hat{H}^{-1}(P) \ar[r]_{Q_n} & \hat{H}^{-1-n}(P) } 
\]
shows that it is enough to consider the case of a $2$-group $P$. Let $K\leq P$ be a central cyclic subgroup of order $2$; then the commutative diagram
\[ \xymatrix{ H^{2^ij}(P) \ar[r]^{Q_{2^ij}^*} & H^0(P) \\ 
              H^j(K) \ar[u]^{\norm} \ar[r]_{Q_j^*}^{\cong} & H^0(K) \ar[u]^{\norm}_{\cong} }
\]
proves the claim.
\end{proof}

For the proof of Theorem~\ref{dualQandTensor} we use the following reinterpretation of $Q_i:\hat{H}^{-1}(G)\rightarrow \hat{H}^{-1-i}(G)$.
\begin{lemma}\label{interpretqi}
Let $G$ be a finite group of order divisible by $p=2$, and let $\varphi\in \hat{H}^{-1}(G)$ be the canonical generator.
For every $i\geq 0$, the complex 
\[ k \xrightarrow{N^{\otimes 2}}  kG^{\otimes 2} \xrightarrow{1+T}  kG^{\otimes 2} \xrightarrow{1+T}  \dots \xrightarrow{1+T} kG^{\otimes 2} \xrightarrow{\varepsilon^{\otimes 2}} k \]
with $i+1$ projective modules $kG^{\otimes 2}$ represents the element $Q_i(\varphi)\in \hat{H}^{-1-i}(G)$. Here, $T$ denotes the twist map interchanging the two factors of $kG^{\otimes 2}$, and $\varepsilon$ is the augmentation map.
\end{lemma}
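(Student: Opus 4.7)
My plan is to compute $Q_i(\varphi)$ as a cocycle using the operadic definition, then translate the result into a $\K_{i+1}(k,k)$-class via Theorem~\ref{ktheorem}.

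Choose a complete projective resolution $P$ of $k$ with $P_{-1}=kG$, coaugmentation $k\hookrightarrow P_{-1}$ given by $1\mapsto N$, and augmentation $\varepsilon:P_0\to k$. By Example~\ref{tateelementexample}, $\varphi$ is represented by the cocycle $\tilde\varphi\in\Hom^{-1}(P,k)$ equal to $\varepsilon:P_{-1}=kG\to k$ on $P_{-1}$ and zero elsewhere. By Proposition~\ref{operadRightProduct}, the Dyer-Lashof operations on $\hat{H}^*(G)$ are induced by the $\C$-algebra structure via the $E_\infty$-approximation $\E=\C\otimes\E'\to\C$. At $p=2$, pick a $C_2$-equivariant chain map $\vartheta:W\to\C(2)$ lifting the augmentation, where $W$ is the standard free $kC_2$-resolution of $k$ with $W_j=kC_2\langle e_j\rangle$ and $\partial e_j=(1+\sigma)e_{j-1}$; such a $\vartheta$ exists as the composite $W\to\E(2)\to\C(2)$ through the ($\Sigma_2$-free, acyclic) $E_\infty$-approximation, and writing $\vartheta_j=\vartheta(e_j)\in\C(2)^{-j}$ one obtains $d\vartheta_j=(1+T)\vartheta_{j-1}$, with $\vartheta_0$ a diagonal approximation $\Delta:P\to P\boxtimes P$. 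The standard operadic recipe then gives $Q_i(\varphi)=[(\tilde\varphi\boxtimes\tilde\varphi)\bullet\vartheta_{i-1}]$ for $i\geq 1$; the case $i=0$ gives $Q_0(\varphi)=0$, and the stated complex for $i=0$ is separately verified to represent zero in $\hat{H}^{-1}(G)$ via Theorem~\ref{ktheorem} (any lift to the resolution $k\xrightarrow{N}kG\xrightarrow{\varepsilon}k$ of $\varphi$ gives $f_2=0$ since $(\varepsilon\otimes\id)(N\otimes N)=|G|N=0$).

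Because $\tilde\varphi$ vanishes outside $P_{-1}$, the cocycle $(\tilde\varphi\boxtimes\tilde\varphi)\bullet\vartheta_{i-1}:P_{-1-i}\to k$ only depends on the component $P_{-1-i}\to P_{-1}\otimes P_{-1}=kG\otimes kG$ of $\vartheta_{i-1}$ post-composed with $\varepsilon\otimes\varepsilon$. I would determine these components inductively using $d\vartheta_j=(1+T)\vartheta_{j-1}$ together with the defining conditions on $\C(2)$; choosing $\vartheta_0$ to be the diagonal induced by the Hopf-algebra comultiplication on $kG$, each $\vartheta_{j+1}$ can be lifted one step at a time so that its relevant $kG\otimes kG$-component is exactly the image of the previous one under $1+T$. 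Applying Theorem~\ref{ktheorem} (together with the projective analogue of Remark~\ref{complexcohomologyclass}, identifying a cocycle on a projective resolution with the obvious truncated complex), this yields precisely the complex $k\xrightarrow{N^{\otimes 2}}kG^{\otimes 2}\xrightarrow{1+T}\cdots\xrightarrow{1+T}kG^{\otimes 2}\xrightarrow{\varepsilon^{\otimes 2}}k$ of the lemma: the middle $1+T$'s arise from the recurrence among the $\vartheta_j$, while the outer $N^{\otimes 2}$ and $\varepsilon^{\otimes 2}$ arise from the tensor-squares of the coaugmentation $1\mapsto N$ and the augmentation $\varepsilon$ on $kG$.

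The main obstacle is the inductive construction of the relevant components of $\vartheta_j$: these are not uniquely determined---only the resulting cohomology class is---so one must carefully choose the lifts to produce the iterated-$1+T$ pattern described above. An alternative strategy, should the direct construction prove cumbersome, is to exhibit an explicit zigzag in $\K_{i+1}(k,k)$ between the lemma's complex and the truncated-$P$-complex associated with $(\tilde\varphi\boxtimes\tilde\varphi)\bullet\vartheta_{i-1}$, bypassing the need to fix $\vartheta_j$ explicitly in its entirety.
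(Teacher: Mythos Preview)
Your overall strategy coincides with the paper's: compute $Q_i(\varphi)$ from the operadic definition via a $\Sigma_2$-equivariant lift $W\to(\text{operad})(2)$, extract the relevant components, and translate into a complex using the dictionary of \S\ref{sectionNegativeExtGroups} (Remark~\ref{complexcohomologyclass}). The one substantive difference is that the paper works with the operad $\B$ of \S\ref{productsofgroups} (already shown there to give the same operations on $M^*(G)$), not with $\C$; this is precisely what dissolves the obstacle you flag.

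Concretely, for any $\Sigma_2$-equivariant chain map $\Psi:W\to\B(2)\subseteq\Hom^*(\overline{P},\overline{P}^{\otimes 2})$ let $\alpha_j:P_{-1-j}\to P_{-1}^{\otimes 2}$ be the degree-$0$ component of $\Psi(e_j)$. Because $\overline{P}$ vanishes in positive degrees, $(\overline{P}^{\otimes 2})_1=0$, so the degree-$0$ part of the relation $d\Psi(e_j)=(1+T)\Psi(e_{j-1})$ collapses to $\alpha_j\circ\partial=(1+T)\circ\alpha_{j-1}$ with no extraneous terms. Hence the commutative ladder between $k\to P_{-1}\to\cdots\to P_{-1-i}\xrightarrow{\lambda}k$ and the complex of the lemma is forced by \emph{any} choice of $\Psi$; no inductive hand-crafting of components and no zigzag in $\K_{i+1}(k,k)$ is needed. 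In your $\C$-based approach the corresponding relation picks up contributions from components $P_{1-j}\to P_{s_1}\otimes P_{s_2}$ with $s_1+s_2=-1$ (not suppressed by the defining conditions on $\C(2)$, which only kill the case $s_1,s_2\geq 0$), and these are exactly what you would have to control by careful choices. So your plan is correct in outline, but switching to $\B$ is what makes the proof short.
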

\begin{proof}
We use the definition of $Q_i$ using the operad $\B$ from \S\ref{productsofgroups}. Let $P$ be a complete projective resolution of $k$ as trivial $kG$-module, where we assume that $P_{-1}=kG$ and $k\hookrightarrow P_{-1}$ is the norm map $N$. Let $W$ be the standard free resolution of $k$ as trivial $k\Sigma_2$-module, with one generator $e_j$ in degree $j$ for every $j\geq 0$. Choose a $\Sigma_2$-equivariant chain map $\Psi:W\rightarrow \B(2)$ lifting the identity of $k$, and define $\alpha_j:P_{-1-j}\rightarrow P_{-1}^{\otimes 2}$ to be the degree $0$-part of the map $\Psi(e_j)\in\B(2)\subseteq \Hom^*(\overline{P},\overline{P}^{\otimes 2})$. We get a commutative diagram
\[
\xymatrix{ k \ar[r]^-{N} \ar@{=}[d] & P_{-1} \ar[d]^{\alpha_0} \ar[r] & P_{-2} \ar[d]^{\alpha_1} \ar[r] & P_{-3} \ar[d]^{\alpha_2} \ar[r] & \dots \ar[r] & P_{-i} \ar[d]^{\alpha_{i-1}} \ar[r]^{\lambda} & k \ar@{=}[d] \\
           k \ar[r]_-{N^{\otimes 2}} & P_{-1}^{\otimes 2} \ar[r]_{1+T} & P_{-1}^{\otimes 2} \ar[r]_{1+T} & P_{-1}^{\otimes 2} \ar[r]_{1+T} & 
             \dots \ar[r]_{1+T} & P_{-1}^{\otimes 2} \ar[r]_{\varepsilon^{\otimes 2}} & k  } \]
By definition, the class of $\lambda$ in $H^i \Hom(\overline{P},k)\cong \hat{H}^{-1-i}(G)$ represents $Q_i(\varphi)$. The commutative diagram then shows the claim by Remark~\ref{complexcohomologyclass}. 
\end{proof}

\begin{lemma} \label{t2representsinHminusOne}
Let $G$ be a finite group of order divisible by $p=2$, and let $\nu:k\rightarrow kG$ be the norm map. If $\alpha:kG^{\otimes 2}\rightarrow k$ is a map for which $k\xrightarrow{\nu^{\otimes 2}} kG^{\otimes 2} \xrightarrow{\alpha} k$ is a complex, then that complex represents $\sum_{g\in G} \alpha(1\otimes g)\in k=\hat{H}^{-1}(G)$. 
\end{lemma}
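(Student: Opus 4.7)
The plan is to apply Theorem~\ref{ktheorem} (with $n=1$ and $A=B=k$) to convert the given complex $C : k \xrightarrow{\nu^{\otimes 2}} kG^{\otimes 2} \xrightarrow{\alpha} k$ into an element of $\underline{\Hom}_{kG}(k,\Omega k)$, and then to identify that element as a scalar multiple of the canonical generator exhibited in Example~\ref{tateelementexample}. Concretely, I would use the short projective resolution $P : 0 \to \Omega k \xrightarrow{i} kG \xrightarrow{\varepsilon} k \to 0$ (so $\Omega k$ is the augmentation ideal) and, following the proof of Theorem~\ref{ktheorem}, lift the identity of $k$ to a chain map $C\to P$; the induced left-hand map $k\to\Omega k$ is exactly the class I want to compute.

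To build the lift, I would first record the standard fact that $kG^{\otimes 2}$ with the diagonal $G$-action is free as a $kG$-module on the basis $\{1\otimes g : g\in G\}$. This follows from the isomorphism $kG^{\otimes 2}\xrightarrow{\cong} kG\otimes_k kG$, $a\otimes b\mapsto a\otimes a^{-1}b$, which intertwines the diagonal action with the left-only action. I can then define a $kG$-linear map $u:kG^{\otimes 2}\to kG$ by $u(1\otimes g)=\alpha(1\otimes g)\cdot 1_{kG}$; by construction $\varepsilon\circ u=\alpha$, so together with $\id_k$ on both ends $u$ assembles to a chain map $C\to P$.

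The third step is to evaluate the resulting map $k\to\Omega k$. Using $g\otimes h = g\cdot(1\otimes g^{-1}h)$ in the basis and the substitution $h'=g^{-1}h$, I get
\begin{align*}
 u\bigl(\nu^{\otimes 2}(1)\bigr) = u\Bigl(\sum_{g,h} g\otimes h\Bigr) = \sum_{g,h'} g\cdot \alpha(1\otimes h')\cdot 1_{kG} = \Bigl(\sum_{g\in G}\alpha(1\otimes g)\Bigr)\cdot N.
\end{align*}
By Example~\ref{tateelementexample}, the canonical generator of $\hat{H}^{-1}(G)\cong k$ corresponds under the bijection of Theorem~\ref{ktheorem} to the map $1\mapsto N$ in $\underline{\Hom}_{kG}(k,\Omega k)$; thus $C$ represents the scalar $\sum_{g\in G}\alpha(1\otimes g)$ times the canonical generator, which is the claim.

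The only non-formal step is the diagonal-action bookkeeping: identifying $\{1\otimes g\}$ as a $kG$-basis of $kG^{\otimes 2}$ and correctly re-expanding $\nu^{\otimes 2}(1)=\sum_{g,h} g\otimes h$ in that basis. Once that is in place, the proof is a direct unwinding of Theorem~\ref{ktheorem}, and no further homological argument is needed.
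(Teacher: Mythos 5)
Your proof is correct and uses a valid, slightly different organization from the paper's. Rather than directly unwinding the inverse bijection $\Psi$ of Theorem~\ref{ktheorem} by constructing an explicit lift $u$ (which, as you note, requires recognizing that $kG^{\otimes 2}$ is $kG$-free on $\{1\otimes g\}$ and checking $\varepsilon\circ u=\alpha$), the paper instead fits the given complex into a commutative ladder of three two-term complexes
\[
 \xymatrix{
   k \ar[r]^{\nu^{\otimes 2}} & kG^{\otimes 2} \ar[r]^{\alpha} & k \\
   k \ar[r]^{\nu} \ar@{=}[u] \ar[d]^{b} & kG \ar[r]^{b\varepsilon} \ar[u]^{1\otimes \nu} \ar[d]^{b} & k \ar@{=}[u] \ar@{=}[d] \\
   k \ar[r]^{\nu} & kG \ar[r]^{\varepsilon} & k }
\]
with $b=\sum_g\alpha(1\otimes g)$, and invokes Proposition~\ref{composnegtwoways} together with Example~\ref{tateelementexample} to conclude that the top row equals $b$ times the canonical generator. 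The computational content is the same: the key middle square $\alpha\circ(1\otimes\nu)=b\varepsilon$ is exactly the $kG$-linearity of $\alpha$ on the basis $\{1\otimes g\}$ that powers your calculation of $u(\nu^{\otimes 2}(1))=b\cdot N$. Your approach makes the identification with the canonical generator very explicit (via the image $bN\in\Omega k$), whereas the paper's avoids exhibiting a lift by reducing formally to the canonical example through the functoriality lemma. One small wording slip: you say the chain map $C\to P$ has ``$\id_k$ on both ends,'' but the left-hand map is the induced $k\to\Omega k$ (the class you are computing), not the identity; you correct this implicitly in the next step when you read off $1\mapsto bN$.
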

\begin{proof}
Let $b=\sum_g\alpha(1\otimes g)$; then the result follows from the commutative diagram
\[
 \xymatrix{
   k \ar[r]^{\nu^{\otimes 2}} & kG^{\otimes 2} \ar[r]^{\alpha} & k \\
   k \ar[r]^{\nu} \ar@{=}[u] \ar[d]^{b} & kG \ar[r]^{b\varepsilon} \ar[u]^{1\otimes \nu} \ar[d]^{b} & k \ar@{=}[u] \ar@{=}[d] \\
   k \ar[r]^{\nu} & kG \ar[r]^{\varepsilon} & k }   
\]
and Example~\ref{tateelementexample} and Proposition~\ref{composnegtwoways}.
\end{proof}

\begin{lemma}\label{technicalSigmanLemma}
 Suppose that $\sigma\in\Sigma_n$ satisfies $\sigma^2=1$ and $\sigma\neq 1$, and let $K$ be a finite group. Define the map $f:\Zz/2\Zz\times K\rightarrow \Sigma_n\wr K$ to be $(u,g)\mapsto (\sigma^u;g,g,\dots,g)$. Then there is some $m\geq 1$ such that for every $x\in H^i(K)$ we have $f^*(x^{\otimes n})=x^{n-2m}\bigl(\sum_{r=0}^{|x|} \Sq^r(x)z^{|x|-r}\bigr)^m$, where $H^*(\Zz/2\Zz)=k[z]$.
\end{lemma}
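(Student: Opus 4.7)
The plan is to factor $f$ through a block-diagonal wreath product matching the cycle decomposition of $\sigma$. Since $\sigma$ has order $2$ and is not the identity, it is a product of $m\geq 1$ disjoint transpositions together with $n-2m$ fixed points. Possibly after permuting coordinates (which leaves $x^{\otimes n}\in H^{ni}(\Sigma_n\wr K)$ invariant since the construction of $x^{\otimes n}$ is $\Sigma_n$-equivariant), the partition of $\{1,\dots,n\}$ into $\sigma$-orbits induces a block inclusion
\[ \iota:(\Sigma_2\wr K)^m\times K^{n-2m}\hookrightarrow \Sigma_n\wr K. \]
Let $\phi:\Zz/2\Zz\times K\to \Sigma_2\wr K$ send $(u,g)$ to $(\tau^u;g,g)$ where $\tau$ is the nontrivial element of $\Sigma_2$, and let $\Delta:\Zz/2\Zz\times K\to (\Zz/2\Zz\times K)^m\times K^{n-2m}$ be the diagonal (forgetting $u$ on the fixed-point factors). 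Directly from the definitions, $f=\iota\circ(\phi^{\times m}\times\mathrm{id}_K^{\times(n-2m)})\circ\Delta$.

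Next, I would restrict $x^{\otimes n}$ step by step along this factorization. Because $x^{\otimes n}$ is built as the augmented tensor product $E^{\otimes n}\rightarrow k$ of an extension $E\rightarrow k$ representing $x$, its restriction along $\iota$ breaks up via K\"unneth into
\[ \iota^*(x^{\otimes n}) = (x^{\otimes 2})^{\otimes m}\otimes x^{\otimes(n-2m)} \]
in $H^*(\Sigma_2\wr K)^{\otimes m}\otimes H^*(K)^{\otimes(n-2m)}$. Pulling back further by $\phi$ on each 2-block factor gives $(\phi^*(x^{\otimes 2}))^{\otimes m}$, and the fixed-point factors remain $x^{\otimes(n-2m)}$. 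Finally, $\Delta^*$ converts external tensors into internal products, giving
\[ f^*(x^{\otimes n}) = \bigl(\phi^*(x^{\otimes 2})\bigr)^m\cdot x^{n-2m}, \]
where on the right $x$ is implicitly pulled back to $\Zz/2\Zz\times K$ along the projection onto $K$.

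It then remains to identify $\phi^*(x^{\otimes 2})\in H^{2|x|}(\Zz/2\Zz\times K)$. This is Steenrod's original construction of the total square: the pullback of the external square $x^{\otimes 2}\in H^{2|x|}(\Sigma_2\wr K)$ along the wreath-product diagonal $\phi$ computes the total Steenrod square of $x$. Concretely (see e.g.\ \cite{benson2}, \S4.5 or \cite{evenssteenrodtransfer}), it equals $\sum_{r=0}^{|x|}\Sq^r(x)\,z^{|x|-r}$, where $z$ generates $H^1(\Zz/2\Zz)$. Substituting this in yields the stated formula.

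The main obstacle I expect is the third step, namely the identification $\phi^*(x^{\otimes 2})=\sum_r\Sq^r(x)\,z^{|x|-r}$: while classical, it requires matching the conventions used here for the external square $(-)^{\otimes 2}$ and for the Steenrod construction of $\Sq^r$, which in particular must be compared with the algebraic definition of the $Q_i$ used throughout the paper. The first two steps, by contrast, are a direct bookkeeping assembly once the block decomposition of $\sigma$ is fixed.
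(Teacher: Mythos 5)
Your proposal is correct and takes essentially the same approach as the paper: reduce $\sigma$ to a product of disjoint transpositions, split $f^*(x^{\otimes n})$ via K\"unneth into a transposition-block part and a fixed-point part, and identify the former with powers of the Evens norm $\norm_{K,\Zz/2\Zz\times K}(x)=\sum_{r}\Sq^r(x)z^{|x|-r}$. The only cosmetic difference is that the paper factors the block part through the iterated wreath product $\Sigma_m\wr(\Sigma_2\wr K)$ via the diagonal $\Psi_{m,\Sigma_2\wr K}$, whereas you factor through the direct product $(\Sigma_2\wr K)^m$; both computations amount to the same internal power.
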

\begin{proof}
We can assume that $\sigma$ is of the form $(1\,\, 2)\, (3\,\, 4)\, \dots (2m-1\,\, 2m)$ for some $m$. For every $i$ and every group $L$ denote by $\Psi_{i,L}$ the map $L\rightarrow \Sigma_i \wr L$ given by $l\mapsto (\id;l,l,\dots,l)$. Let $h$ be the composition
\[
 \Zz/2\Zz \times K \rightarrow \Sigma_2\wr K \xrightarrow{\Psi_{m,\Sigma_2\wr K}} \Sigma_m\wr(\Sigma_2\wr K) \hookrightarrow \Sigma_{2m}\wr K,
\]
the first map being given by $(u;g) \mapsto (\tau^u;g;g)$, where $\tau$ is the generator of $\Sigma_2$. Also let $j$ be the composition $\Zz/2\Zz\times K \xrightarrow{\text{proj}} K \xrightarrow{\Psi_{n-2m,K}} \Sigma_{n-2m}\wr K$; then we get a composition
\[
 \Zz/2\Zz \times K \xrightarrow{h\times j} \Sigma_{2m}\wr K \times \Sigma_{n-2m}\wr K \hookrightarrow \Sigma_n \wr K
\]
which equals $f$. Now let $x\in H^i(K)$; then $x^{\otimes n}\in H^{ni}(\Sigma_n\wr K)$ restricts to $(x^{\otimes 2m})\otimes(x^{\otimes (n-2m)})\in H^{ni}(\Sigma_{2m}\wr K\times \Sigma_{n-2m}\wr K)$. Now
\[ h^*(x^{\otimes 2m}) = \res \Psi^*_{m,\Sigma_2\wr K}( (x^{\otimes 2})^{\otimes m}) = (\res x^{\otimes 2})^m = \bigl(\norm_{K,\Zz/2\Zz\times K}(x)\bigr)^m. \]
On the other hand, $j^*(x^{\otimes (n-2m)}) = x^{n-2m}$, so it remains to show that
\[ \norm_{K,\Zz/2\Zz\times K}(x) = \sum_{r=0}^{|x|} \Sq^r(x)z^{|x|-r}, \]
which is done, e.g., in ~\cite{benson2}, \S4.4; note that this can actually be used to define the Steenrod operations on ordinary group cohomology. 
\end{proof}

Consider the augmentation $k\Sigma_n\rightarrow k$ as an augmented chain complex; then the augmented chain complex $k\Sigma_n\otimes k\Sigma_n\rightarrow k$ is a chain complex of right $\Sigma_2\times \Sigma_n$-modules, where $\Sigma_n$ acts diagonally and $\Sigma_2$ acts by permuting the factors. Let $W$ be the standard free resolution of $k$ as trivial $k\Sigma_2$-module. By endowing $W$ with a trivial right $\Sigma_n$-action, we can consider $W\rightarrow k$ and hence also $W\otimes k\Sigma_n\otimes k\Sigma_n\rightarrow k$ as augmented chain complexes of right $\Sigma_2\times \Sigma_n$-modules. As such the latter consists entirely of free modules, and we can therefore lift the identity map of $k$ to a map of chain complexes $\vartheta:W\otimes k\Sigma_n\otimes k\Sigma_n\rightarrow W^{\otimes n}$, where $\Sigma_n$ acts on $W^{\otimes n}$ by permuting the factors and $\Sigma_2$ acts diagonally. Finally note that $k\Sigma_n$ is a right $\Sigma_n\wr K$-module via the projection map $\Sigma_n\wr K\rightarrow \Sigma_n$, and $kK^{\otimes n}$ is also a right $\Sigma_n\wr K$-module. Therefore the tensor product $k\Sigma_n\otimes kK^{\otimes n}$ is a right $k(\Sigma_n\wr K)$-module, which is free of rank one. We can now form the following map $\xi$ of augmented chain complexes over $k(\Sigma_n\wr K)$:
\[
\xymatrix{
 W\otimes_{\Sigma_2} (k(\Sigma_n\wr K))^{\otimes 2} \ar[r]^-{\cong} \ar[dr]_{\xi} & W\otimes_{\Sigma_2} (k\Sigma_n \otimes kK^{\otimes n})^{\otimes 2} \ar[r]^-{\cong} &
  (W\otimes k\Sigma_n\otimes k\Sigma_n) \otimes_{\Sigma_2} (kK^{\otimes n})^{\otimes 2} \ar[d]^{\vartheta\otimes \text{twist}} \\
 & (W\otimes_{\Sigma_2} kK^{\otimes 2})^{\otimes n} & W^{\otimes n} \otimes_{\Sigma_2} (kK^{\otimes 2})^{\otimes n} \ar[l]^{\text{twist}} }
\]
In the following, we consider triples $(E,\bar{\beta},\bar{\gamma})$ where $E\rightarrow k$ is an exact sequence $k=E_i \hookrightarrow E_{i-1} \rightarrow \dots \rightarrow E_0 \twoheadrightarrow k$ of $kK$-modules, $\bar{\beta}:W\otimes_{\Sigma_2} kK^{\otimes 2} \rightarrow E$ is a map of augmented chain complexes, and $\bar{\gamma}$ is defined to be $\bar{\gamma} = \bar{\beta}^{\otimes n} \circ \xi$. Then $\bar{\gamma}:W\otimes_{\Sigma_2} (k(\Sigma_n\wr K))^{\otimes 2}\rightarrow E^{\otimes n}$ is a map of augmented chain complexes over $k(\Sigma_n \wr K)$. Define $\beta$ to be the composite
\[ kK^{\otimes 2} \cong W_i\otimes_{\Sigma_2} kK^{\otimes 2} \xrightarrow{\bar{\beta}_i} E_i=k, \]
and similarly define $\gamma:k(\Sigma_n\wr K)^{\otimes 2}\rightarrow k$. Furthermore, for every group $L$ let us define the subset 
$L' = \{ l\in L \,\mid\, l^2 = 1 \} \subseteq L$.
\begin{lemma} \label{LprimeIsEnough}
 If $(E,\bar{\beta},\bar{\gamma})$ is a triple as above, then $\sum_{g\in K} \beta(1\otimes g) = \sum_{g\in K'} \beta(1\otimes g)$ and $\sum_{l\in L} \gamma(1\otimes l) = \sum_{l\in L'} \gamma(1\otimes l)$.
\end{lemma}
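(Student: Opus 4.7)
The plan is to extract two symmetries of $\beta$ --- a $\Sigma_2$-invariance forced by the chain-map condition at top degree, and a $K$-equivariance coming from $kK$-linearity --- and then pair $g$ with $g^{-1}$ using characteristic~$2$ to kill all summands from $K\setminus K'$.

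First I would unpack the differential of $W\otimes_{\Sigma_2} kK^{\otimes 2}$. Using the identification $W_j\otimes_{\Sigma_2} kK^{\otimes 2}\cong kK^{\otimes 2}$, $e_j\otimes x\mapsto x$, together with the fact that over $\Ff_2$ the boundary of $W$ is $1+T$ in every degree, one sees that this differential is simply $1+T$, where $T$ swaps the two tensor factors of $kK^{\otimes 2}$. Since $E_{i+1}=0$, the chain-map relation $d_E\circ\bar\beta_{i+1}=\bar\beta_i\circ d$ reduces at top degree to $\beta\circ(1+T)=0$, that is, $\beta(Ta)=\beta(a)$ for every $a\in kK^{\otimes 2}$. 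Separately, since $\bar\beta$ is $kK$-linear (with $K$ acting diagonally on $kK^{\otimes 2}$ and trivially on $W$ and on $k$), we have $\beta(a\cdot g)=\beta(a)$ for all $g\in K$. Combining the two,
\[
  \beta(1\otimes g)=\beta\bigl((g^{-1}\otimes 1)\cdot g\bigr)=\beta(g^{-1}\otimes 1)=\beta\bigl(T(1\otimes g^{-1})\bigr)=\beta(1\otimes g^{-1}).
\]

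The involution $g\mapsto g^{-1}$ of $K$ has fixed-point set $K'$, so $K\setminus K'$ splits into pairs $\{g,g^{-1}\}$, each contributing $\beta(1\otimes g)+\beta(1\otimes g^{-1})=2\beta(1\otimes g)=0$ in characteristic~$2$. Hence $\sum_{g\in K}\beta(1\otimes g)=\sum_{g\in K'}\beta(1\otimes g)$.

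The assertion for $\gamma$ is obtained by running the identical argument with $L=\Sigma_n\wr K$ in place of $K$: the map $\bar\gamma=\bar\beta^{\otimes n}\circ\xi$ is a morphism of augmented $k(\Sigma_n\wr K)$-module chain complexes into $E^{\otimes n}$, which is concentrated in degrees $0,\ldots,ni$. Thus $\gamma=\bar\gamma_{ni}$ is $\Sigma_2$-invariant (because $E^{\otimes n}_{ni+1}=0$) and $L$-equivariant (from the diagonal $kL$-action on $(kL)^{\otimes 2}$), and the same involution-pairing step concludes the argument. The mildest obstacle, in both cases, is the bookkeeping needed to recognise the top-degree differential of $W\otimes_{\Sigma_2}(-)^{\otimes 2}$ as $1+T$; once that identification is in place the rest is a short characteristic~$2$ cancellation.
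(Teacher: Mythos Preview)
Your proof is correct and follows essentially the same route as the paper: both establish $\beta(1\otimes g)=\beta(1\otimes g^{-1})$ by combining the $\Sigma_2$-invariance of $\beta$ (forced by the chain-map condition at top degree, since $E_{i+1}=0$) with $kK$-linearity, and then cancel in pairs over $K\setminus K'$ using characteristic~$2$. The only cosmetic difference is that the paper records the invariance as the formula $\bar{\beta}_i(w\otimes_{\Sigma_2}(g\otimes h))=\varepsilon(w)\beta(g\otimes h)$, which it labels and reuses in the subsequent lemma on the constants $c_{n,i,\sigma}$; your argument extracts only the consequence needed here.
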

\begin{proof}
The formula
\begin{align}  \label{betaformula}
  \bar{\beta}_i(w\otimes_{\Sigma_2} (g\otimes h)) = \varepsilon(w)\beta(g\otimes h), 
\end{align}
holds because it is true for $w=1\in\Sigma_2$ and for $w=1-\tau\in k\Sigma_2$ (where $\tau$ is the generator of $\Sigma_2$) since $\bar{\beta}$ is a chain map. The formula implies that $\beta(1\otimes g) = \beta(g\otimes 1) = \beta(1\otimes g^{-1})$, and therefore $\sum_{g\in K} \beta(1\otimes g) = \sum_{g\in K'} \beta(1\otimes g)$. The same proof applies to $\gamma$.
\end{proof}

\begin{lemma}
 There exist constants $c_{n,i,\sigma}$ (for all $\sigma\in\Sigma_n$), not depending on $K$, with the following property: for all triples $(E,\bar{\beta},\bar{\gamma})$ as above and all elements $g=(\sigma,k_1,k_2,\dots,k_n)\in \Sigma_n\wr K$ we have that
\[ \gamma(1\otimes g) = c_{n,i,\sigma}\cdot \beta(1\otimes k_1)\cdots \beta(1\otimes k_n). \]
\end{lemma}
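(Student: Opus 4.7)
The plan is to chase the element $e_{ni}\otimes_{\Sigma_2}(1\otimes g)$ through the composite defining $\bar\gamma=\bar\beta^{\otimes n}\circ\xi$ and show that the dependence on $K$ lies entirely in factors of the form $\beta(1\otimes k_j)$. First I would establish the $\gamma$-analog of \eqref{betaformula}: because $(E^{\otimes n})_{ni+1}$ vanishes (there is no way to write $ni+1$ as a sum of $n$ non-negative integers each $\leq i$), the chain-map relation forces $\bar\gamma_{ni}((1+\tau)e_{ni}\otimes_{\Sigma_2}v)=0$, so in characteristic $2$ one gets $\bar\gamma_{ni}(w\otimes_{\Sigma_2}v)=\epsilon(w)\gamma(v)$ for all $w\in W_{ni}$, $v\in k(\Sigma_n\wr K)^{\otimes 2}$. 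In particular $\gamma(1\otimes g)=\bar\beta^{\otimes n}_{ni}(\xi_{ni}(e_{ni}\otimes_{\Sigma_2}(1\otimes g)))$.

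Next I would trace the element through the four pieces of $\xi$. Writing $g=(\sigma;k_1,\dots,k_n)$ and using $k(\Sigma_n\wr K)\cong k\Sigma_n\otimes kK^{\otimes n}$, the rearrangement step sends our element to
\[ (e_{ni}\otimes 1\otimes\sigma)\otimes_{\Sigma_2}\bigl((1\otimes\cdots\otimes 1)\otimes(k_1\otimes\cdots\otimes k_n)\bigr). \]
Applying $\vartheta\otimes\text{twist}$ replaces the first slot by $V_\sigma:=\vartheta(e_{ni}\otimes 1\otimes\sigma)\in(W^{\otimes n})_{ni}$, a fixed element depending only on $n$, $i$, $\sigma$ and the once-for-all chosen $\vartheta$, and the second slot by $(1\otimes k_1)\otimes\cdots\otimes(1\otimes k_n)$. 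Expanding $V_\sigma=\sum_\alpha w_1^\alpha\otimes\cdots\otimes w_n^\alpha$ in a basis of $W^{\otimes n}$ adapted to the multigrading, with $w_j^\alpha\in W_{i_j^\alpha}$ and $\sum_j i_j^\alpha=ni$, the final twist of $\xi$ produces $\sum_\alpha\bigotimes_{j=1}^n\bigl(w_j^\alpha\otimes_{\Sigma_2}(1\otimes k_j)\bigr)$ in $(W\otimes_{\Sigma_2}kK^{\otimes 2})^{\otimes n}$ of total degree $ni$.

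Applying $\bar\beta^{\otimes n}_{ni}$, since $E$ is supported in degrees $0\leq j\leq i$ and $E_i=k$, only those summands with $i_j^\alpha=i$ for every $j$ contribute to $(E^{\otimes n})_{ni}=k$; on these, \eqref{betaformula} gives $\bar\beta_i(w_j^\alpha\otimes_{\Sigma_2}(1\otimes k_j))=\epsilon(w_j^\alpha)\beta(1\otimes k_j)$. Multiplying and summing yields
\[ \gamma(1\otimes g)=\biggl(\sum_{\substack{\alpha\\ w_j^\alpha\in W_i\,\forall j}}\prod_{j=1}^n\epsilon(w_j^\alpha)\biggr)\cdot\prod_{j=1}^n\beta(1\otimes k_j), \]
so defining $c_{n,i,\sigma}$ as the parenthesised scalar finishes the argument. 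Independence of $K$ is manifest because every ingredient of that scalar (namely $\vartheta$, the distinguished generators $e_j\in W_j$ and the augmentation $\epsilon\colon k\Sigma_2\to k$) is built only from $n$, $i$, $\sigma$.

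The main obstacle is the bookkeeping of the two rearrangement/twist isomorphisms used to build $\xi$, in particular verifying on our specific element that the passage from $W^{\otimes n}\otimes_{\Sigma_2}(kK^{\otimes 2})^{\otimes n}$ to $(W\otimes_{\Sigma_2}kK^{\otimes 2})^{\otimes n}$ really does separate the $K$-factors as a clean tensor product $\bigotimes_j(w_j^\alpha\otimes_{\Sigma_2}(1\otimes k_j))$. Once this is checked, the remainder is routine scalar extraction, and the $K$-independence of $c_{n,i,\sigma}$ is automatic since $K$ enters the construction of $\xi$ only through the trivially-permuted last tensor factor.
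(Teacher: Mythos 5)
Your proof is correct and follows the same approach as the paper: expand $\vartheta(e_{ni}\otimes 1\otimes\sigma)$ as a sum of simple tensors, observe that only those summands with every factor in $W_i$ can survive the application of $\bar\beta^{\otimes n}$ (either because $\bar\beta$ vanishes above degree $i$, as the paper phrases it, or because $(E^{\otimes n})_{ni}$ has only the $E_i^{\otimes n}$ component, as you phrase it — the two are equivalent since the multidegrees sum to $ni$), then apply the formula \eqref{betaformula} and collect the resulting scalar. Your first paragraph establishing a $\gamma$-analog of \eqref{betaformula} is a harmless but superfluous detour: the identity $\gamma(1\otimes g)=\bar\beta^{\otimes n}_{ni}(\xi_{ni}(e_{ni}\otimes_{\Sigma_2}(1\otimes g)))$ follows directly from the definitions $\gamma(v)=\bar\gamma_{ni}(e_{ni}\otimes_{\Sigma_2}v)$ and $\bar\gamma=\bar\beta^{\otimes n}\circ\xi$, without needing the general statement about arbitrary $w\in W_{ni}$. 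The explicit tracking of the element through the four constituent maps of $\xi$ is a useful expansion of the paper's more terse presentation and the bookkeeping is correct.
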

Some of the constants will be determined later in Proposition~\ref{constantsProp}.
\begin{proof}
We can write $\vartheta_{ni}(1\otimes 1\otimes \sigma) = \sum_s w_{s,1}\otimes\dots\otimes w_{s,n}$ where $w_{s,j}\in W$. Then $\gamma(1\otimes g)$ equals 
\[ \gamma(1\otimes g) = \sum_s \bar{\beta}^{\otimes n} \Bigl( \bigl(w_{s,1}\otimes_{\Sigma_2} (1\otimes k_1) \bigr) \otimes \dots \otimes \bigl(w_{s,n}\otimes_{\Sigma_2} (1\otimes k_n)\bigr) \Bigr). \]
If the degree of one of the $w_{s,j}$ is bigger than $i$, then the corresponding $s$-th summand vanishes because $\bar{\beta}$ is the zero map. Therefore, we are only interested in the case where all $w_{s,j}$ are of degree~$i$, in which case we can simplify by \eqref{betaformula}
\begin{align*}
 \gamma(1\otimes g)  
  &= \Bigl( \sum_s \varepsilon(w_{s,1})\cdots\varepsilon(w_{s,n}) \Bigr) \cdot \beta(1\otimes k_1)\beta(1\otimes k_2)\cdots \beta(1\otimes k_n) \\
  &= c_{n,i,\sigma} \cdot \beta(1\otimes k_1)\beta(1\otimes k_2)\cdots \beta(1\otimes k_n),
\end{align*}
where $c_{n,i,\sigma}$ is some constant in $k$ not depending on the group $K$. 
\end{proof}

\begin{lemma}
Let $L$ be any finite group, and suppose that $f:L\rightarrow \Sigma_n\wr K$ is an injective group homomorphism and $c\in k$ is some constant. Suppose that for all triples $(E,\bar{\beta},\bar{\gamma})$ as above we have that $c\cdot \sum_{g\in K} \beta(1\otimes g) = \sum_{l\in L} \gamma(1\otimes f(l))$. Then for all $x\in H^i(K)$ the formula $Q_{ni}^* f^*(x^{\otimes n}) = c\cdot f^*(Q_i^*(x)^{\otimes n})\in H^0(L)$ holds.
\end{lemma}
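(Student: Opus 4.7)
The plan is to translate both sides of the claim into scalars in $k=\Ff_2$ via Tate duality and then match them to the hypothesis using the complex representative of $Q_i(\varphi)$ from Proposition~\ref{interpretqi}. First I discard the trivial cases: $Q_0^*=0$ by Theorem~\ref{maintheorem}(3), $H^i(K)=0$ if $2\nmid|K|$, and $H^{ni}(L)=0$ if $2\nmid|L|$. So assume $i\geq 1$ and $2$ divides both $|K|$ and $|L|$. Then $H^0(L)\cong \hat{H}^{-1}(L)\cong k$ under the multiplication pairing, so the adjoint relations read $Q_{ni}^*(f^*(x^{\otimes n})) = f^*(x^{\otimes n})\cdot Q_{ni}(\varphi_L)$ and $Q_i^*(x) = x\cdot Q_i(\varphi_K)$ in $k$. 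Because $f^*:H^0(\Sigma_n\wr K)\to H^0(L)$ is the identity of $k$ and $(-)^{\otimes n}:H^0(K)=k\to H^0(\Sigma_n\wr K)=k$ sends $a$ to $a^n=a$ in $\Ff_2$, the right hand side of the claim simplifies to $c\cdot Q_i^*(x)$.

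Next I compute both Tate products explicitly. Represent $x$ by $E$ and $Q_i(\varphi_K)$ by the complex from Proposition~\ref{interpretqi}. By Proposition~\ref{negcomposlemma}, $x\cdot Q_i(\varphi_K)$ is represented by a two-term complex $k\xrightarrow{N_K^{\otimes 2}} kK^{\otimes 2}\xrightarrow{\beta_0} k$, where $\beta_0$ is the leftmost map of any chain map lifting $\id_k$ from the truncated $Q$-complex $kK^{\otimes 2}\xrightarrow{1+T}\dots\xrightarrow{1+T}kK^{\otimes 2}\xrightarrow{\varepsilon^{\otimes 2}} k$ to $E$. The key observation is that this truncated complex coincides with the truncation of $W\otimes_{\Sigma_2}kK^{\otimes 2}$ in degrees $0,\dots,i$, augmented by $\varepsilon^{\otimes 2}$ (which is a valid augmentation since $\varepsilon^{\otimes 2}(1+T)=0$ in characteristic~$2$). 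Hence the chain map $\bar{\beta}$ from our triple \emph{is} such a lifting, and its top-degree part equals $\beta$ by formula~\eqref{betaformula}. Proposition~\ref{t2representsinHminusOne} then gives $Q_i^*(x)=\sum_{g\in K}\beta(1\otimes g)$. The same argument on the $L$-side, applied to the augmented chain map $\bar{\gamma}':W\otimes_{\Sigma_2}kL^{\otimes 2}\to E^{\otimes n}$ obtained by precomposing $\bar{\gamma}$ with the map induced by $f\otimes f$, yields $Q_{ni}^*(f^*(x^{\otimes n}))=\sum_{l\in L}\gamma(1\otimes f(l))$. Multiplying the first identity by $c$ and inserting the hypothesis completes the proof.

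The main conceptual obstacle is the identification of $\bar{\beta}$ and $\bar{\gamma}'$ as valid lifts for Proposition~\ref{negcomposlemma}: the sources $W\otimes_{\Sigma_2}kK^{\otimes 2}$ and $W\otimes_{\Sigma_2}kL^{\otimes 2}$ have infinitely many terms, whereas the lift is required only on a truncation, and one must check that the augmentation on the tensor product agrees with the augmentation at the bottom of the $Q$-complex of Proposition~\ref{interpretqi}. Once one observes that the truncated tensor product literally coincides with that complex---the differential of $W$ becoming $1+T$ in characteristic~$2$ and $\varepsilon^{\otimes 2}$ descending to a chain map into $k$---this is transparent, but it is the heart of the matter.
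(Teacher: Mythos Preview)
Your proof is correct and follows essentially the same route as the paper's: both reduce the claimed identity to $Q_{ni}^*f^*(x^{\otimes n})=c\cdot Q_i^*(x)$ in $k$, then compute each side as a Tate product using Proposition~\ref{interpretqi}, Proposition~\ref{negcomposlemma}, and Proposition~\ref{t2representsinHminusOne}, obtaining $Q_i^*(x)=\sum_{g\in K}\beta(1\otimes g)$ and $Q_{ni}^*(f^*(x^{\otimes n}))=\sum_{l\in L}\gamma(1\otimes f(l))$. Your explicit identification of the truncation of $W\otimes_{\Sigma_2}kK^{\otimes 2}$ with the complex of Proposition~\ref{interpretqi} is exactly the point the paper uses implicitly when it builds the lift $\bar\beta$; your discussion of the trivial cases is slightly more detailed than the paper's bare ``we may assume'', though your dismissal of the case $2\nmid|L|$ only addresses the left-hand side.
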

\begin{proof}
We may assume that the order of both $K$ and $L$ is divisible by $p=2$. Then we identify $\hat{H}^0(K)$ and $\hat{H}^0(L)$ with $k$, so that we have to prove
\[ Q_{ni}^* f^*(a^{\otimes n}) = c \cdot Q_i^*(a) \]
for all $a\in H^i(K)$ (recall that $k=\Ff_2$). 
Let $E$ be an exact sequence representing $a\in H^i(K)$. Since the modules of the augmented complex $W\otimes kK^{\otimes 2}\rightarrow k$ are free, we can lift the identity of $k$ to a chain map $\bar{\beta}$:
 \[ \xymatrix{  k \ar[r]^-{\nu^2} & kK^{\otimes 2} \ar[r]^{1+T} \ar[d]_{\beta} & kK^{\otimes 2} \ar[r]^{1+T} \ar[d] & \dots \ar[r]^{1+T} & kK^{\otimes 2}  \ar[r]^-{\varepsilon^2} \ar[d] & k \ar@{=}[d] \\
    & k \ar[r] & E_{i-1} \ar[r] & \dots \ar[r] & E_0 \ar[r] & k } \]
Here $\nu:k\rightarrow kK$ is the norm map, and the upper row represents $Q_i(\kappa)\in \hat{H}^{-1-i}(K)$ for the generator $\kappa\in \hat{H}^{-1}(K)$ (by Proposition~\ref{interpretqi}). Due to Proposition~\ref{negcomposlemma} the product $Q_i(\kappa)a\in \hat{H}^{-1}(K)$ is represented by the complex $k \xrightarrow{\nu^2} kK^{\otimes 2} \xrightarrow{\beta} k$. Therefore, by Proposition~\ref{t2representsinHminusOne}, $Q_i(\kappa) a = \sum_{g\in K} \beta(1\otimes g) \kappa$ and hence   
\begin{align} \label{fcompareQdualLemma1} 
 Q_i^*(a) = \sum_{g\in K} \beta(1\otimes g) \in k.
\end{align}
As before, we get a triple $(E,\bar{\beta},\bar{\gamma})$ in such a way that the diagram of $kL$-modules
\[ \xymatrix{
      k \ar[r]^-{\mu^2} & kL^{\otimes 2} \ar[r]^{1+T} \ar[d]_{(kf)^{\otimes 2}} & kL^{\otimes 2} \ar[r]^{1+T} \ar[d] & \dots \ar[r]^{1+T} & kL^{\otimes 2}  \ar[r]^-{\varepsilon^2} \ar[d] & k \ar@{=}[d] \\
                        & k(\Sigma_n\wr K)^{\otimes 2} \ar[r]^{1+T} \ar[d]_{\gamma} & k(\Sigma_n\wr K)^{\otimes 2} \ar[r]^-{1+T} \ar[d] & \dots \ar[r]^-{1+T} & k(\Sigma_n\wr K)^{\otimes 2}  \ar[r]^-{\varepsilon^2} \ar[d] & k \ar@{=}[d] \\
                        & k \ar[r] & (E^{\otimes n})_{ni-1} \ar[r] & \dots \ar[r] & (E^{\otimes n})_0 \ar[r] & k }
\]
commutes, where $\mu:k\rightarrow KL$ is the norm map, such that the upper row represents $Q_{ni}(\lambda)\in \hat{H}^{-1-ni}(L)$, where $\lambda\in\hat{H}^{-1}(L)$ is the generator. As above, Propositions~\ref{negcomposlemma} and~\ref{t2representsinHminusOne} show that
$Q_{ni}(\lambda) f^*(a^{\otimes n}) = \sum_{l\in L} \gamma(1\otimes f(l)) \lambda$, so that
\begin{align} \label{fcompareQdualLemma2}
 Q_{ni}^*(f^*(a^{\otimes n})) = \sum_{l\in L} \gamma(1\otimes f(l)) \in k.
\end{align}
Combining formulas \eqref{fcompareQdualLemma1} and \eqref{fcompareQdualLemma2} we get the desired result.
\end{proof}
We will now exploit this fact for several maps $f$.
\begin{lemma}\label{constantsProp}
The constants $c_{n,i,\sigma}$ satisfy 
\begin{align}
   c_{n,i,\id} &= 1,  \label{constantsClaimA} \\
   c_{n,i,\sigma} &= 0 &&\text{if $\sigma^2=1$ and $\sigma\neq \id$, and}  \label{constantsClaimB} \\
   c_{n,i,\sigma} &= c_{n,i,\sigma^{-1}} &&\text{for all $\sigma$.}  \label{constantsClaimC} 
\end{align}
\end{lemma}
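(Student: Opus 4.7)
My plan is to pin down each constant by applying the preceding Proposition to a carefully chosen group homomorphism $f\colon L\hookrightarrow \Sigma_n\wr K$ together with a well-understood cohomology class. Throughout I take $K=\Zz/2\Zz$, so that $\hat{H}^*(K)=k[s^{\pm 1}]$ (Example~\ref{examplecyclic2}), and work with the test class $x=s^i\in H^i(K)$. A short calculation using the formula $Q_j(s^{-1})=s^{-1-j}$ over $\Ff_2$ and Tate duality shows that $Q_i^*(s^i)=Q_{ni}^*(s^{ni})=1\in\Ff_2$, so $x$ is far from being in the kernel of any relevant operation.

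For~\eqref{constantsClaimA}, I would take $f=\Psi_{n,K}\colon k\mapsto(\id;k,\ldots,k)$. Then $\gamma(1\otimes f(k))$ collapses to $c_{n,i,\id}\,\beta(1\otimes k)^n=c_{n,i,\id}\,\beta(1\otimes k)$ in $\Ff_2$, so the hypothesis of the preceding Proposition is satisfied with $c=c_{n,i,\id}$; its conclusion applied to $x=s^i$ reads $1=c_{n,i,\id}\cdot 1$, giving the claim.

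For~\eqref{constantsClaimB}, I apply the preceding Proposition to the map $f\colon\Zz/2\Zz\times K\to\Sigma_n\wr K$ from Proposition~\ref{technicalSigmanLemma}; summing over $(u,g)\in\Zz/2\Zz\times K$ and using~\eqref{constantsClaimA} gives $c=c_{n,i,\id}+c_{n,i,\sigma}=1+c_{n,i,\sigma}$. Proposition~\ref{technicalSigmanLemma} combined with $\Sq^r(s^i)=\binom{i}{r}s^{i+r}$ simplifies $f^*(x^{\otimes n})$ over $\Ff_2$ to $s^{i(n-m)}(s+z)^{im}$. What I expect to be the main obstacle is then to compute $Q_{ni}^*$ of this element inside $H^*(\Zz/2\Zz\times\Zz/2\Zz)$: from Example~\ref{exampleC2C2} and Tate duality one extracts that $Q_{ni}^*(s^az^b)=1$ precisely when both $a,b\geq 1$ (with $a+b=ni$), and expanding the binomial together with the identity $\sum_{j=0}^{im-1}\binom{im}{j}=2^{im}-1=1$ in $\Ff_2$ (valid because $n>m$ follows from $2m\leq n$ and $m\geq 1$) yields the value~$1$. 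Comparing with $Q_i^*(s^i)=1$ then forces $1+c_{n,i,\sigma}=1$, as desired.

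For~\eqref{constantsClaimC}, I plan to argue formally as in Proposition~\ref{LprimeIsEnough}: the $\Sigma_2$-equivariance and $k(\Sigma_n\wr K)$-linearity of $\bar\gamma$ together imply $\gamma(1\otimes l)=\gamma(1\otimes l^{-1})$ for all $l\in\Sigma_n\wr K$. Writing $l=(\sigma;k_1,\ldots,k_n)$ and $l^{-1}=(\sigma^{-1};k_{\sigma^{-1}(1)}^{-1},\ldots,k_{\sigma^{-1}(n)}^{-1})$, and using $\beta(1\otimes k^{-1})=\beta(1\otimes k)$ together with commutativity of scalar multiplication, reduces the claim to the identity $c_{n,i,\sigma}\prod_j\beta(1\otimes k_j)=c_{n,i,\sigma^{-1}}\prod_j\beta(1\otimes k_j)$. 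To conclude I need only exhibit one triple $(E,\bar\beta,\bar\gamma)$ and elements $k_j\in K$ for which the product is nonzero; this is supplied by the computation in~\eqref{constantsClaimA}, since $\sum_g\beta(1\otimes g)=Q_i^*(s^i)=1\neq 0$ guarantees some $g_0\in K$ with $\beta(1\otimes g_0)\neq 0$, and setting $k_1=\cdots=k_n=g_0$ then yields $\beta(1\otimes g_0)^n\neq 0$ in $\Ff_2$.
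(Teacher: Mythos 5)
Your proof is correct and follows essentially the same route as the paper: for \eqref{constantsClaimA} and \eqref{constantsClaimB} you test against the same groups ($K=\Zz/2\Zz$, and $\Zz/2\Zz\times\Zz/2\Zz$) and the same inclusions into $\Sigma_n\wr K$, and for \eqref{constantsClaimC} you invoke the same symmetry $\gamma(1\otimes g)=\gamma(1\otimes g^{-1})$ together with the existence of a $\beta$ nonvanishing on some $1\otimes g_0$. The only cosmetic difference is in \eqref{constantsClaimB}, where you expand $(s+z)^{im}$ and sum the binomial coefficients directly, while the paper packages the same computation as the evaluation of the polynomial at $(1,1),(1,0),(0,1)$; you are also slightly more explicit than the paper in justifying why a suitable $\beta$ exists for \eqref{constantsClaimC}.
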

\begin{proof}[Proof of Proposition~\ref{constantsProp} and Theorem~\ref{dualQandTensor}]
As a first step, take $L=K$ and let $f:K\rightarrow \Sigma_n\wr K$ be given by $g\mapsto (\id;g,\dots,g)$ for all $g\in K$. Then $x^n = f^*(x^{\otimes n})$, and the computation
\[ \sum_{g\in K} \gamma(1\otimes f(g)) = \sum_{g\in K} \gamma(1\otimes(\id;g,\dots,g)) = c_{n,i,\id} \sum_{g\in K} \beta(1\otimes g)^n = c_{n,i,\id}\sum_{g\in K} \beta(1\otimes g) \]
shows that $Q_i^*(x) = c_{n,i,\id} Q_{ni}^*(x^n)\in H^0(K)$. If we put $K=\Zz/2\Zz$, then the computations in Example~\ref{examplecyclic2} show that the constant $c_{n,i,\id}$ equals $1$, so we have proved \eqref{constantsClaimA} and Theorem~\ref{dualQandTensor}.\eqref{dualQandTensorC}.

As a second step, let us take $L=\Zz/2\times K$ and let $f:\Zz/2\Zz\times K\rightarrow \Sigma_n\wr K$ to be given by $(u,g)\mapsto (\sigma^u;g,g,\dots,g)$, where $\sigma\in\Sigma_n$ is some fixed element of order $2$. Then 
\begin{align*}
 \sum_{g\in \Zz/2\Zz\times K} \gamma(1\otimes f(g)) &= \sum_{g\in K} \gamma(1\otimes(\id;g,\dots,g)) + \gamma(1\otimes(\sigma;g,\dots,g))  \\
           &= (1+c_{n,i,\sigma}) \sum_{g\in K} \beta(1\otimes g).
\end{align*}
We take $K=\Zz/2\Zz$, but we keep the notation $K$ in order to distinguish from the other factor $\Zz/2\Zz$. We have $H^*(K)\cong k[x]$ and $H^*(\Zz/2\Zz)\cong k[z]$ for one-dimensional classes $x$ and $z$. By Proposition~\ref{technicalSigmanLemma}, we know that $f^*((x^i)^{\otimes n}) = f^*(x^{\otimes n})^i = x^{(n-2m)i}(x^2+xz)^{mi}$. By the computations in Example~\ref{exampleC2C2}, applying $Q_{ni}^*$ to such a polynomial in $x,z$ equals the sum of the evaluations of that polynomial at $(x,z)=(1,1)$, $(0,1)$ and $(1,0)$; therefore, $Q_{ni}^*(f^*((x^i)^{\otimes n})) = 1\in H^0(\Zz/2\Zz\times K)$. This implies \eqref{constantsClaimB}.

In order to prove \eqref{constantsClaimC}, take a situation in which $l\in K$ is of order $2$, and $\beta$ is such that $\beta(1\otimes l)\neq 0$. Then put $g=(\sigma;l,l,\dots,l)$ and the result follows from $\gamma(1\otimes g)=\gamma(1\otimes g^{-1})$. Up to this point, we have proved Proposition~\ref{constantsProp} completely. 

Now we prove Theorem~\ref{dualQandTensor}.\eqref{dualQandTensorA}. Take $f:L\rightarrow \Sigma_n\wr K$ to be the identity map of $\Sigma_n\wr K$ and compute
\begin{align*}
 \sum_{g=(\sigma;k_1,\dots,k_n)\in \Sigma_n\wr K} \gamma(1\otimes g) &= \sum_{\sigma} c_{n,i,\sigma} \cdot \sum_{k_1,\dots,k_n\in K} \beta(1\otimes k_1)\beta(1\otimes k_2)\cdots \beta(1\otimes k_n) \\
&= \sum_{\sigma} c_{n,i,\sigma} \bigl( \sum_{g\in K} \beta(1\otimes g) \bigr)^n.
\end{align*}
By Proposition~\ref{constantsProp}, $\sum_{\sigma\in\Sigma_n} c_{n,i,\sigma} = 1$, which proves Theorem~\ref{dualQandTensor}.\eqref{dualQandTensorA}.

Let $\pi:\Sigma_n\wr K\rightarrow \Sigma_n$ be the projection map. For the proof of Theorem~\ref{dualQandTensor}.\eqref{dualQandTensorB}, choose a set $\{g_i\}$ of coset representatives of $K$ in $G$ with the property that all the $g_i$'s commute with all elements of order $2$ in $K$. Then for each $g\in G$, there are unique elements $k_1,\dots,k_n\in K$ and $\sigma\in\Sigma_n$ such that $g g_j = g_{\sigma(j)} k_j$ for all $j$, and we get an injection $f:G\hookrightarrow \Sigma_n\wr K$ by $g\mapsto (\sigma;k_1,\dots,k_n)$. Then $\norm_{K,G} (x) = f^*(x^{\otimes n})$, and we need to investigate  
\begin{align*}
 \sum_{g\in G} \gamma(1\otimes g) &= \sum_{g\in G'} \gamma(1\otimes g)  &&\text{by Proposition~\ref{LprimeIsEnough}} \\
&= \sum_{\substack{g\in G' \\ \pi(f(g))=\id}} \gamma(1\otimes g) &&\text{by \eqref{constantsClaimA} and \eqref{constantsClaimB}.}
\end{align*}

But if $\pi(f(g))=\id$, then $g g_j = g_j k_j$ for all $j$, which means $gg_j = k_j g_j$ for all $j$ by our condition on $K$. Therefore, we get $g=k_j$ for all $j$, so that $g\in K$ and $f(g)=(\id;g,g,\dots,g)$. Conversely, if $g\in K$, then $f(g)=(\id;g,g,\dots,g)$ by our condition on $K$. Therefore,
\begin{align*}
  \sum_{g\in G} \gamma(1\otimes g) &= \sum_{g\in K'} \gamma(1\otimes (\id;g,g,\dots,g)) = \sum_{g\in K'} \beta(1\otimes g)^n = \sum_{g\in K'} \beta(1\otimes g).
\end{align*}
This proves \eqref{dualQandTensorB} of Theorem~\ref{dualQandTensor}.
\end{proof}

\begin{example} Let us work out in detail the operations $Q$ on the generator of $\hat{H}^{-1}(G)$ in the case $G=D_8$, the dihedral group with $8$ elements. The structure of the cohomology ring $H^*(G)$ is known to be $H^*(G) \cong k[a,b,c] / (ab)$ where $|a|=|b|=1$ and $|c|=2$ (see, e.g., \cite{Carlson}, Theorem~7.8). From \cite{BCProductsNegativeCohomology}, Theorem~3.1 and Lemma~2.1 we get that $\hat{H}^-(G)\cdot \hat{H}^-(G)=0$ and that $\hat{H}^n(G)\cdot \hat{H}^m(G)=0$ for $n<0\leq n+m$. Consider the $k$-basis $\{a^ic^j,b^ic^j\}_{i,j\geq 0}$ of $H^*(G)$ and let us define $\{\varphi_{a^ic^j},\varphi_{b^ic^j}\}$ to be the dual basis; in particular, $\varphi_1$ is the canonical generator of $\hat{H}^{-1}(G)$. Using Tate duality, one derives the relations
\begin{align*}
 c \varphi_{a^ic^j} &= \begin{cases} \varphi_{a^ic^{j-1}} & \text{if $j>0$,} \\ 0 & \text{otherwise,} \end{cases} \\
 a \varphi_{a^ic^j} &= \begin{cases} \varphi_{a^{i-1}c^j} & \text{if $i>0$,} \\ 0 & \text{otherwise,} \end{cases} \\
 a \varphi_{b^ic^j} &= 0, 
\end{align*}
and similarly for $a$ and $b$ interchanged. All these facts together completely determine the multiplicative structure of $\hat{H}^*(G)$.

From $Q_1(\varphi_1)=\varphi_1^2=0$ we get that $Q_1^*:H^1(G)\rightarrow H^0(G)$ is zero. Therefore, by Theorem~\ref{dualQandTensor}.\eqref{dualQandTensorC}, we get $Q_2^*(a^2)=Q_2^*(b^2)=0$. Now notice that $G\cong \Sigma_2\wr \Zz/2\Zz$, so that Theorem~\ref{dualQandTensor} implies that $Q_2^*:H^2(G)\rightarrow H^0(G)$ is onto and hence $Q_2^*(c)=1$. We have therefore determined $Q_2(\varphi_1)=\varphi_c$. More generally, note that $Q(\varphi_1)=Q(a\varphi_a)=Q(a)Q(\varphi_a)=(a+a^2)Q(\varphi_a)$ is divisible by $a$ and, by symmetry, also by $b$. This fact already implies that $Q_i(\varphi_1)$ is a multiple of $\varphi_{c^j}$ for some $j$. Together with $Q_{2i}^*(c^i) = Q_2^*(c)^i=1$ we get 
\begin{align*}
 Q_{2i}(\varphi_1) &= \varphi_{c^i}   && \text{for $i\geq 1$,} \\
 Q_{2i+1}(\varphi_1) &= 0  && \text{for $i\geq 0$.}  
\end{align*}
\end{example}

\begin{remark} \label{normNotGeneral}
Let us prove that Theorem~\ref{dualQandTensor}.\eqref{dualQandTensorB} is not true for arbitrary subgroups $K$ of $G$. Take $K=\Zz/2\Zz\times \Zz/2\Zz$ and $G=D_8$, and let us write $\norm$ for $\norm_{K,G}$. We know that $H^*(K)\cong k[x,y]$ for some one-dimensional classes $x,y$. Suppose that Theorem~\ref{dualQandTensor}.\eqref{dualQandTensorB} would hold in that case. Then $Q_2^*(\norm(x))=\norm(Q_1^*(x))=0$, so that $\norm(x)=\alpha a^2 + \beta b^2$ for some $\alpha,\beta\in k$. Similarly, $\norm(y)=\alpha' a^2+\beta' b^2$ for some $\alpha',\beta'\in k$. But then $\norm(xy)=\norm(x)\norm(y)=\alpha\alpha' a^4 + \beta\beta' b^4$ and hence
\[ 0 = Q_4^*(\norm(xy)) \neq \norm(Q_2^*(xy)) = \norm(1), \]
a contradiction. 
\end{remark}

\section{Productive elements at the prime 2}
Let $G$ be a finite group, and let $\zeta:\Omega^n k\rightarrow k$ be a surjective map representing a Tate cohomology class $[\zeta]\in\hat{H}^n(G)$. Define $L_\zeta$ to be the kernel of $\zeta$; we therefore get an exact triangle 
\begin{align} \label{zetaTriangle}
 \Omega k \xrightarrow{\eta} L_\zeta \xrightarrow{\iota} \Omega^n k \xrightarrow{\zeta} k.
\end{align}
Following Carlson (\cite{Carlson}, \S9) we call the class $[\zeta]$ \emph{productive} if $\zeta$ annihilates the cohomology of $L_\zeta$, that is, the map $\zeta\otimes\id_{L_\zeta}:\Omega^n k\otimes L_\zeta\rightarrow L_\zeta$ is stably zero. It is known that, for all primes $p\geq 3$, a non-zero class $[\zeta]$ is productive if and only if its degree $n$ is even (see \cite{CarlsonPapr}, Theorem~4.1). The case $p=2$ is more complicated, and we will show in this section that the operations $Q$ can be used to determine whether a class is productive or not:
\begin{thm}\label{productiveTheorem}
Let $p=2$, and let $G$ be a finite group. A cohomology class $[\zeta]\in\hat{H}^n(G)$ is productive if and only if $\Pp_1(\zeta)$ is divisible by $\zeta$ in $\hat{H}^*(G)$. 
\end{thm}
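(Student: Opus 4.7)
The plan is to translate both conditions into chain-level statements over a complete projective resolution and compare them. Let $P$ be a complete projective resolution of $k$ by finitely generated $kG$-modules, and fix a cocycle $\zeta\in \Hom^n_{kG}(P,k)$ representing $[\zeta]$, a diagonal $\Delta\in \C(2)^0$ lifting the identity of $k$, and a cochain $\Delta_1\in \C(2)^{-1}$ with $d\Delta_1 = (1+T)\Delta$, where $T$ denotes the twist on $P\boxtimes P$. Such a $\Delta_1$ exists by acyclicity of $\C(2)$: at $p=2$, $(1+T)\Delta$ has zero augmentation, so it is a coboundary. Using the construction of the operations from the operad action, the cocycle $\zeta^{\boxtimes 2}\circ \Delta_1\in \Hom^{2n-1}_{kG}(P,k)$ represents $\Pp_1([\zeta])$. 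Thus divisibility of $\Pp_1([\zeta])$ by $[\zeta]$ in $\hat{H}^*(G)$ is equivalent to the existence of a cocycle $\alpha\in \Hom^{n-1}_{kG}(P,k)$ and a cochain $\mu\in \Hom^{2n-2}_{kG}(P,k)$ with
\[
\zeta^{\boxtimes 2}\circ \Delta_1 \;-\; (\zeta\boxtimes \alpha)\circ \Delta \;=\; d\mu.
\]

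Next, I would reinterpret productivity at the chain level. Using the triangle~\eqref{zetaTriangle} together with the machinery of Section~\ref{sectionNegativeExtGroups}, the module $L_\zeta$ can be modelled as an explicit complex built from $P$ and $\zeta$; productivity says that the composite $\Omega^n k\otimes L_\zeta \xrightarrow{\zeta\otimes 1} L_\zeta$ is null in $\stmod kG$. Via Proposition~\ref{ktheorem} and the subsequent composition lemmas, this translates into the existence of a specific chain-level null-homotopy, which can be encoded as a cochain on $P^{\boxtimes 2}$ that handles the twist $T$ modulo boundaries.

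The bridge between the two chain-level reformulations is that $\Delta_1$ is precisely the chain that witnesses the $\Sigma_2$-symmetry of $\zeta^{\boxtimes 2}\circ\Delta$, so the obstructions controlling productivity and divisibility live in the same quotient $\hat{H}^{2n-1}(G)/\zeta\cdot \hat{H}^{n-1}(G)$. In one direction, given $(\alpha,\mu)$ witnessing divisibility, I would exploit the $\Sigma_2$-equivariant structure on $P^{\boxtimes 2}$ to manufacture a chain null-homotopy of $\zeta\otimes\id_{L_\zeta}$, establishing productivity; in the reverse direction, a null-homotopy for productivity combines with $\Delta_1$ and $\Delta$ to produce a cochain whose class gives the required $\alpha$. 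The main obstacle will be setting up these translations carefully enough that the two obstruction classes match on the nose, not merely modulo their individual indeterminacies: one must track $\Sigma_2$-equivariance and the ambiguity in choosing $\Delta_1$, the chain model for $L_\zeta$, and the various intervening homotopies, and verify that they assemble into a single well-defined class in $\hat{H}^{2n-1}(G)/\zeta\cdot\hat{H}^{n-1}(G)$ whose vanishing is simultaneously productivity and divisibility.
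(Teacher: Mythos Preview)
Your proposal is an outline rather than a proof: the decisive steps are announced but not carried out, and the route you sketch is not the one the paper takes.

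The paper does not try to match two chain-level obstruction classes. Instead it proves a single structural fact in $\stmod kG$ (Proposition~\ref{commutativeSquare}): the square
\[
\xymatrix{
      \Omega^n k \otimes L_\zeta \ar[r]^-{\zeta\otimes\id} \ar[d]_{\id\otimes\iota} & L_\zeta \\
      \Omega^n k \otimes \Omega^n k \ar[r]_-{\Pp_1(\zeta)} & \Omega k \ar[u]_{\eta} }
\]
commutes stably. The bridge between productivity and $\Pp_1$ is the explicit map $\kappa:\Omega^n k\otimes\Omega^n k\to L_\zeta$, $a\otimes b\mapsto \zeta(a)b+\zeta(b)a$, which makes the upper-left triangle commute unstably; the lower-right triangle is then verified using a $K_*$-representative of $\Pp_1(\zeta)$ (Lemma~\ref{pponeRepresentative}) and Proposition~\ref{triproductlemma}. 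Once the square is established, both directions are short triangulated-category arguments on the triangle $\Omega k\to L_\zeta\to\Omega^n k\to k$: divisibility gives $\eta\,\Pp_1(\zeta)=0$ and hence $\zeta\otimes\id_{L_\zeta}=0$; conversely, productivity gives $\eta\,\Pp_1(\zeta)\,(\id\otimes\iota)=0$, and a brief diagram chase (using that $[\zeta]\neq 0$ makes $\zeta_*:\hat H^0(G)\to\hat H^n(G)$ injective, so $\eta_*$ is surjective) yields divisibility.

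Your plan never identifies any concrete link between $\zeta\otimes\id_{L_\zeta}$ and $\Pp_1(\zeta)$. The sentence ``this translates into the existence of a specific chain-level null-homotopy, which can be encoded as a cochain on $P^{\boxtimes 2}$ that handles the twist $T$'' is exactly where the content should be, and invoking Proposition~\ref{ktheorem} does not supply it: productivity is a vanishing statement for a morphism in $\stmod kG$, not a class in some $K_n(A,B)$. Your assertion that both obstructions live in $\hat H^{2n-1}(G)/\zeta\cdot\hat H^{n-1}(G)$ is precisely the theorem in disguise; proving it amounts to proving Proposition~\ref{commutativeSquare}. Until you produce a construction playing the role of $\kappa$ (or an equivalent), the argument has a gap at its center.
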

\begin{remark}
The 'only if' part of this theorem has been conjectured and independently proven in the case of ordinary cohomology classes by Erg\"{u}n Yal\c{c}in, using connections to the existence of diagonal approximations of certain chain complexes.
\end{remark}

The proof of Theorem~\ref{productiveTheorem} relies on the following commutative diagram.
\begin{lemma}\label{commutativeSquare}
Under the conditions of the theorem, the following diagram commutes stably:
\[ \xymatrix{
      \Omega^n k \otimes L_\zeta \ar[r]^-{\zeta\otimes\id} \ar[d]_{\id\otimes\iota} & L_\zeta \\
      \Omega^n k \otimes \Omega^n k \ar[r]_-{\Pp_1(\zeta)} & \Omega k \ar[u]_{\eta} } 
\]
\end{lemma}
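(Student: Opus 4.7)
The plan is a direct chain-level verification. First I would realise $\zeta$ as an honest surjective $kG$-module homomorphism $\zeta:N\twoheadrightarrow k$, where $N$ is a stable representative of $\Omega^n k$ (enlarging by a free summand if needed so that $\zeta$ can be arranged to be surjective). Then $L=\ker\zeta$, $\iota$ is the inclusion, and by Remark~\ref{isomorphismremark} the class $\eta\in\underline{\Hom}(\Omega k,L)$ is precisely the Yoneda class of the short exact sequence $0\to L\to N\to k\to 0$. Using the description of negative Ext-groups from Section~\ref{sectionNegativeExtGroups}, all four arrows in the diagram can be modelled by explicit finite complexes of projectives, and the two composites become concrete chain maps.

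The key move is to give a chain-level model of $\Pp_1(\zeta)$ using the operad $\C$ of Section~\ref{sectionOperad}. Since $(1+T)\Delta\in\C(2)^0$ is a cocycle that augments to $1+1=0$ and $\C(2)$ is acyclic, there is $\theta\in\C(2)^{-1}$ with $d\theta=(1+T)\Delta$; then $(\zeta\boxtimes\zeta)\circ\theta\in\Hom^{2n-1}(P,k)$ is a cocycle representing $\Pp_1(\zeta)$ --- this is the operadic version of the Steenrod $\cup_1$-square. The vanishing condition built into the definition of $\C(2)^{-1}$ ensures that $\theta$ restricts appropriately on the negative part of $P$, which is what will allow the resulting expressions to be interpreted in the correct finite complexes.

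The heart of the proof is then to build a chain homotopy witnessing the identity $\zeta\otimes\id_L = \eta\circ\Pp_1(\zeta)\circ(\id\otimes\iota)$ out of $\theta$. The idea is that on $N\otimes L\subset N\otimes N$ the second tensor factor lies in $\ker\zeta$, so one of the two terms of $(\zeta\otimes\zeta)\circ(1+T)\Delta\circ(\id\otimes\iota)$ vanishes automatically. The defining identity $d\theta=(1+T)\Delta$ then permits one to rewrite $(\zeta\otimes\zeta)\circ\theta\circ(\id\otimes\iota)$ as a chain map whose boundary behaviour, combined with the short exact sequence defining $\eta$, reproduces $\zeta\otimes\id_L$ modulo a coboundary. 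This explains on the nose why $\eta$ appears: it accounts for the degree shift by one coming from the fact that $\Pp_1(\zeta)$ has cohomological degree $2n-1$ while the overall composite is a stable map of degree zero.

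The main obstacle will be the bookkeeping in that last step. Coordinating the operadic construction of $\Pp_1(\zeta)$ with the module-theoretic definitions of $L_\zeta$ and $\eta$ forces one to assemble the four arrows into a single finite diagram of complexes of projectives, and to verify that the constructed homotopy is well-defined in the stable category --- i.e.\ independent of the choices of $\theta$, of the surjective representative $\zeta$, and of the various projective lifts involved. The technical tools for these independence checks, in particular Propositions~\ref{liftindeplemma}, \ref{triproductlemma} and~\ref{composnegtwoways}, are already in place from Section~\ref{sectionNegativeExtGroups}, but some care is required to apply them in the correct order.
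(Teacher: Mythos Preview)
Your approach is essentially the paper's, and the ingredients you list are the right ones: realise $\zeta$ as an honest surjection, build $\Pp_1(\zeta)$ from a $\theta\in\C(2)^{-1}$ with $d\theta=(1+T)\Delta$, and exploit that the second tensor factor lies in $\ker\zeta$. The paper, however, packages the last step more cleanly than your ``build a chain homotopy'' sketch, and it is worth seeing how.

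The paper first isolates a module-level representative of $\Pp_1(\zeta)$ (its Proposition~\ref{pponeRepresentative}): using your $\theta$ it produces a two-term complex $\Omega^n k\otimes\Omega^n k\to Q\to k$ with $Q$ projective, together with a map of complexes down to $\Omega^n k\otimes\Omega^n k\xrightarrow{1+T}\Omega^n k\otimes\Omega^n k\xrightarrow{\zeta^{\otimes 2}}k$. This is exactly the translation from the chain level of $P$ to the $\K_1$-picture that your third paragraph gestures at; without it the expression ``$(\zeta\otimes\zeta)\circ\theta\circ(\id\otimes\iota)$'' is not literally well-formed, since $\theta$ has source $P$ while $\id\otimes\iota$ lives at the module level.

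The second organisational device is a diagonal splitting of the square by the explicit map
\[
\kappa:\Omega^n k\otimes\Omega^n k\longrightarrow L_\zeta,\qquad a\otimes b\longmapsto \zeta(a)\,b+\zeta(b)\,a.
\]
Your observation ``on $N\otimes L$ the second factor lies in $\ker\zeta$'' is precisely the statement that the upper-left triangle $(\zeta\otimes\id)=\kappa\circ(\id\otimes\iota)$ commutes on the nose. For the lower-right triangle one simply stacks the diagram of Proposition~\ref{pponeRepresentative} on top of
\[
\xymatrix{
\Omega^n k\otimes\Omega^n k \ar[r]^-{1+T}\ar[d]_\kappa & \Omega^n k\otimes\Omega^n k \ar[r]^-{\zeta^{\otimes 2}}\ar[d]_{\zeta\otimes\id} & k \ar@{=}[d]\\
L_\zeta \ar[r]_-\iota & \Omega^n k \ar[r]_-\zeta & k }
\]
whose bottom row is the extension representing $\eta$, and then invokes Proposition~\ref{triproductlemma} once. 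No explicit homotopy needs to be written down, and the independence-of-choices issues you worry about in your last paragraph are absorbed by the $\K_n$-formalism rather than checked by hand. In short: your plan works, but the map $\kappa$ is the missing organising idea that turns it into a two-line argument.
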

\begin{proof}[Proof of Theorem~\ref{productiveTheorem}]
We assume that $[\zeta]\neq 0$. If $\Pp_1(\zeta)$ is divisible by $\zeta$, then there is a map $\alpha:\Omega^n k\otimes \Omega^n k\rightarrow \Omega^{n+1}k$ such that $\Pp_1(\zeta)=\zeta\alpha$. But then $\eta\Pp_1(\zeta)=\eta\zeta\alpha=0$ because $\eta\zeta=0$. By Proposition~\ref{commutativeSquare} we get that $\zeta$ is productive.

Conversely, suppose that $\zeta$ is productive, so that $\eta\Pp_1(\zeta)\cdot (\id\otimes\iota)=0$ by Proposition~\ref{commutativeSquare}. Since the triangle
\[ \Omega^n k \otimes L_\zeta \xrightarrow{\id\otimes\iota} \Omega^n k \otimes \Omega^n k \xrightarrow{\id\otimes \zeta} \Omega^n k \]
is exact, we get that $\eta\Pp_1(\zeta) = \lambda\cdot(\id\otimes\zeta)$ for some map $\lambda:\Omega^n k \rightarrow L_\zeta$. When we apply the homological functor $\underline{\Hom}_{kG}(\Omega^n k, -)$ to the triangle \eqref{zetaTriangle}, we get a long exact sequence
\[ \underline{\Hom}_{kG}(\Omega^n k, \Omega k) \xrightarrow{\eta_*} \underline{\Hom}_{kG}(\Omega^n k, L_\zeta) \xrightarrow{\iota_*} 
   \underline{\Hom}_{kG}(\Omega^n k, \Omega^n k ) \xrightarrow{\zeta_*} \underline{\Hom}_{kG}(\Omega^n k, k).
\]
Here $\zeta_*$ can be viewed as $\zeta\cdot:\hat{H}^0(G)\rightarrow \hat{H}^n(G)$ which is injective because the class $[\zeta]$ is non-zero. By exactness, $\iota_*=0$ and $\eta_*$ is surjective. In particular, $\lambda = \eta \rho$ for some map $\rho:\Omega^n k \rightarrow \Omega k$. Altogether we have that
\[ \eta(\Pp_1(\zeta) - \rho(\id\otimes\zeta)) = \lambda(\id\otimes \zeta) - \eta\rho(\id\otimes \zeta) = 0, \]
and therefore $\Pp_1(\zeta)-\rho(\id\otimes\zeta)=\zeta\sigma$ for some map $\sigma:\Omega^n k\otimes \Omega^n k \rightarrow \Omega^{n+1}k$. But then $\Pp_1(\zeta) = [\rho][\zeta] + [\zeta][\sigma]$, so that $\Pp_1(\zeta)$ is divisible by $[\zeta]$. 
\end{proof}

\begin{remark}
Before we start proving that the diagram commutes, let us draw some analogies to the topological world. Let us define $k/\zeta$ to be some choice of cone of $\zeta:\Omega^n k\rightarrow k$. Then the commutative square of Proposition~\ref{commutativeSquare} is a shift of the diagram on the left-hand side:
\[ \xymatrix{ 
 \Omega^n k \otimes k/\zeta \ar[r]^-{\zeta} \ar[d] & k/\zeta & &   S/2 \ar[r]^{2} \ar[d]_{\text{pinch}} & S/2 \\
\Omega^n k\otimes \Omega^{n-1} k \ar[r]_-{\Sq_1(\zeta)} & k \ar[u] & & \Sigma S \ar[r]_{\boldsymbol{\eta}} & S \ar[u]_{\text{incl}} } \] 
Note the similarity to the topological situation on the right-hand side, which takes place in the stable homotopy category. Here, $S$ denotes the sphere spectrum, $S/2$ is the mod-$2$-Moore spectrum, a cone of multiplication by $2$ on $S$, and $\boldsymbol{\eta} = \Sq_1(2)$ is the Hopf map. 
\end{remark}

The rest of this section is devoted to the proof of Proposition~\ref{commutativeSquare}. Let $p=2$, and let $G$ be a finite group. Let $P$ be a complete projective resolution of the trivial $kG$-module $k$, and let this resolution define the modules $\Omega^n k$.
\begin{lemma}\label{pponeRepresentative} 
 There is an unstably commutative diagram
\[ \xymatrix{
     \Omega^n k\otimes \Omega^n k \ar[r] \ar@{=}[d] & Q \ar[r] \ar[d] & k \ar@{=}[d] \\
     \Omega^n k\otimes \Omega^n k \ar[r]_{1+T} & \Omega^n k\otimes \Omega^n k \ar[r]_-{\zeta^{\otimes 2}} & k } 
\]
in which $Q$ is projective, and the upper row is a complex representing $\Pp_1(\zeta)\in\hat{H}^{2n-1}(G)$.
\end{lemma}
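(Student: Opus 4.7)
The strategy is to read off the diagram from the $E_\infty$-operadic construction of $\Pp_1(\zeta)$. Choose a $\Sigma_2$-equivariant chain map $\Psi\colon W\to \C(2)$ lifting $\id_k$, where $W$ is the standard free $k\Sigma_2$-resolution of $k$ with generators $e_j$ in degree~$j$. Set $D_0 := \Psi(e_0)\in \C(2)^0$ and $D_1 := \Psi(e_1)\in \C(2)^{-1}$, so that $dD_1 = (1+T)D_0$. As in the proof of Proposition~\ref{interpretqi}, $\Pp_1(\zeta) = Q_{1-n}(\zeta)$ is represented by the cocycle $\zeta^{\boxtimes 2}\circ D_1\in\Hom^{2n-1}(P,k)$; and since $\zeta\colon P_n\to k$ vanishes off chain degree~$n$, only the component $\delta := D_1^{(n,n)}\colon P_{2n-1}\to P_n\otimes P_n$ of $D_1$ actually contributes.

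Next I would take $Q := P_n\otimes P_n$, which is projective, set $b := \zeta\otimes\zeta\colon Q\to k$, and take the middle vertical $v\colon Q\to \Omega^n k\otimes \Omega^n k$ to be the canonical surjection $\bar\partial\otimes\bar\partial$, so that the right square commutes by construction. The left square requires a $kG$-linear map $a\colon\Omega^n k\otimes\Omega^n k\to Q$ with $v\circ a = 1+T$. The chain-map identity $(1+T)D_0 = dD_1$, projected onto the $(n,n)$-component in chain degree~$2n$, yields a relation of the form $(1+T)\Delta^{(n,n)} = \partial\circ\delta + \text{boundary corrections}$ coming from the neighboring components $D_1^{(n+1,n)}$ and $D_1^{(n,n+1)}$. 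After passing to cycles in the source and to the quotient $\Omega^n k\otimes\Omega^n k$ in the target, those corrections vanish, so $\delta$ (suitably modified) furnishes the desired lift $a$. The top row is then automatically a complex, since $b\circ a = \zeta^{\otimes 2}\circ (1+T) = 0$ in characteristic~$2$.

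Finally, to show that the top row represents $\Pp_1(\zeta)$ as an element of $K_1(\Omega^n k\otimes\Omega^n k,k)$, I would invoke the bijection $\Psi$ from Theorem~\ref{ktheorem}: lift $\id_k$ through a projective resolution of $k$ to obtain a stable map $\Omega^n k\otimes\Omega^n k \to \Omega k$, and compare it, under the iso $\underline{\Hom}(\Omega^n k\otimes\Omega^n k, \Omega k) \cong \hat{H}^{2n-1}(G)$, with the class of the cocycle $\zeta^{\boxtimes 2}\circ D_1$. The main obstacle will be the construction of $a$: lifting $1+T$ through the non-projective target $\Omega^n k\otimes\Omega^n k$ is a genuine $\Ext^1$-obstruction in general, and aligning this lift with the operadic homotopy datum $D_1$ demands careful manipulation of the chain-level identity $dD_1 = (1+T)D_0$. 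If $P_n\otimes P_n$ turns out to be too small to admit the lift directly, one can enlarge $Q$ by a free summand without changing the class of the top row.
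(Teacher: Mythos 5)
Your proposal starts from the right place (lift a $\Sigma_2$-equivariant map $W\to\C(2)$ and use the degree-$(-1)$ datum $\Delta_1 := \Psi(e_1)$, so that $\zeta^{\boxtimes 2}\circ\Delta_1$ represents $\Pp_1(\zeta)$), but the construction of the left vertical $a$ is where the argument breaks down. The relevant component $\delta=\Delta_1^{(n,n)}$ has domain $P_{2n-1}$; restricting to cycles produces a map defined on $\Omega^{2n}k\subseteq P_{2n-1}$, not on $\Omega^n k\otimes\Omega^n k$. These two modules are stably equivalent but in general not isomorphic, and transporting a map across a stable equivalence so that the resulting square commutes \emph{unstably} (as the statement requires) forces a projective correction. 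This is not a cleanup step one can add afterwards: the paper chooses a stable inverse $\omega$ of the stable equivalence $\psi:\Omega^{2n}k\to\Omega^n k\otimes\Omega^n k$, factors $\psi\omega-\id$ through a projective $R$ as $\beta\alpha$, and then takes the middle module to be $P_{2n-1}\oplus R$ (not $P_n\otimes P_n$), with the incoming map $\smatrix{\text{incl}\circ\omega\\\alpha}$ and vertical $\smatrix{\lambda,& (1+T)\beta}$ where $\lambda=\pi_n^{\boxtimes 2}\circ\Delta_1$. The commutativity of the left square in characteristic~$2$ is then a direct computation using $\lambda\circ\text{incl}=(1+T)\psi$ and $\beta\alpha=\psi\omega-\id$. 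Your phrase ``$\delta$ (suitably modified) furnishes the desired lift $a$'' names the difficulty without resolving it.

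There is a second, logically distinct gap: even if some lift $a$ of $1+T$ through $v$ exists, lifts are not unique, and different choices yield complexes representing classes that differ by an arbitrary element of $\zeta^{\otimes 2}\cdot\underline{\Hom}(\Omega^n k\otimes\Omega^n k,\Omega^{2n+1}k)$ — this is precisely the ambiguity the operad $\C$ was designed to kill. It therefore does not suffice to argue for the \emph{existence} of $a$; one must exhibit a lift aligned with $\Delta_1$ and then verify that the top row represents $\Pp_1(\zeta)$. The paper does this by putting $\lambda$ directly into the middle vertical and then comparing, via Proposition~\ref{composnegtwoways} and the stable equivalence $\omega$, the complex $\Omega^{2n}k\hookrightarrow P_{2n-1}\xrightarrow{\zeta^{\otimes 2}\lambda}k$ (whose class is $\Pp_1(\zeta)$ by the operadic definition) with the top row of the diagram. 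Your last step recognizes that such a comparison via Theorem~\ref{ktheorem} is needed, but the actual verification depends on the explicit maps $\omega,\alpha,\beta$ you have not constructed.
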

\begin{proof}
We use the operad $\C$ given in \S\ref{tateoperaddefinition} for the definition of $\Pp_1(\zeta)$. Choose a morphism of augmented $\Sigma_2$-chain complexes $\Delta:W\rightarrow \C(2)$, where $W$ is the standard free resolution of $k$ as trivial $\Sigma_2$-module. When we consider $\Omega^n k$ as a chain complex concentrated in degree $0$, then the differential of $P$ induces a chain map $\pi_n:P\rightarrow\Omega^n k$ of degree~$n$. Let $\Delta_i=\Delta(e_i)$, where $e_i$ is the generator of $W_i$. We get a commutative diagram
\[ \xymatrix{ P \ar[r]^{\Delta_0} \ar[d]^{\pi_{2n}} & P^{\boxtimes 2} \ar[d]^{\pi_n^{\boxtimes 2}} \ar[r]^{1+T} & P^{\boxtimes 2} \ar[d]^{\pi_n^{\boxtimes 2}} \\
             \Omega^{2n} k \ar[r]_-{\psi} &  (\Omega^n k)^{\otimes 2} \ar[r]_{1+T} & (\Omega^n k)^{\otimes 2} }
\]
where $\psi$ is a stable equivalence and the upper row equals $d\Delta_1$. Since $\pi_n$ is a chain map, this diagram restricts to the following commutative diagram in dimension $2n$:
\[ \xymatrix{ P_{2n} \ar[r]^{\partial} \ar[d]^{\pi_{2n}} & P_{2n-1} \ar[r]^{\Delta_1} & (P^{\boxtimes 2})_{2n} \ar[d]^{\pi_n^{\boxtimes 2}} \\
              \Omega^{2n} k \ar[r]_-{\psi} &  (\Omega^n k)^{\otimes 2} \ar[r]_{1+T} & (\Omega^n k)^{\otimes 2} }
\]
We define $\lambda = \pi_n^{\boxtimes 2}\circ \Delta_1$; then $\zeta^{\otimes 2}\circ \lambda:P_{2n-1}\rightarrow k$ represents $\Pp_1(\zeta)$ by definition.
Since $\psi$ is a stable equivalence, we can choose a map $\omega:(\Omega^n k)^{\otimes 2}\rightarrow \Omega^{2n} k$ such that $\psi\omega-\id$ factors as 
\[ \psi\omega-\id:(\Omega^n k)^{\otimes 2}\xrightarrow{\alpha} Q \xrightarrow{\beta} (\Omega^n k)^{\otimes 2} \]
for some projective module $R$. Then we have a commutative diagram
\[ \xymatrix@C=45pt{ 
   \Omega^{2n} k \ar[r]^{\text{incl}} & P_{2n-1} \ar[r]^{\zeta^{\otimes 2}\circ\lambda} & k \\
   \Omega^n k\otimes\Omega^n k \ar[r]^{\smatrix{\text{incl}\circ \omega \\ \alpha}} \ar@{=}[d] \ar[u]^{\omega} & 
        P_{2n-1}\oplus R \ar[d]_{\smatrix{\lambda, & (1+T)\circ \beta}} \ar[r]^-{\smatrix{\zeta^{\otimes 2}\circ \lambda, & 0}} \ar[u]^{\smatrix{\id, & 0}} & 
        k \ar@{=}[d] \ar@{=}[u] \\
   \Omega^n k\otimes \Omega^n k \ar[r]_{1+T} & \Omega^n k\otimes \Omega^n k \ar[r]_-{\zeta^{\otimes 2}} & k }
\]
proving the claim.
\end{proof}

\begin{proof}[Proof of Proposition~\ref{commutativeSquare}]
Define the map $\kappa:\Omega^n k \otimes \Omega^n k \rightarrow L_\zeta$ by $a\otimes b \mapsto \zeta(a)b+\zeta(b)a$, then the upper left triangle in the diagram
\[ \xymatrix{
      \Omega^n k \otimes L_\zeta \ar[r]^-{\zeta\otimes\id} \ar[d]_{\id\otimes\iota} & L_\zeta \\
      \Omega^n k \otimes \Omega^n k \ar[ur]^{\kappa} \ar[r]_-{\Pp_1(\zeta)} & \Omega k \ar[u]_{\eta} } 
\]
commutes, and we want to show that the bottom right triangle also commutes. To do so, we extend the diagram of Proposition~\ref{pponeRepresentative} as follows:
\[ \xymatrix{
     \Omega^n k\otimes \Omega^n k \ar[r] \ar@{=}[d] & Q \ar[r] \ar[d] & k \ar@{=}[d] \\
     \Omega^n k\otimes \Omega^n k \ar[r]_{1+T} \ar[d]_{\kappa} & \Omega^n k\otimes \Omega^n k \ar[r]_-{\zeta^{\otimes 2}} \ar[d]_{\zeta\otimes\id} & k \ar@{=}[d]  \\
     L_\zeta \ar[r]_{\iota} & \Omega^n k \ar[r]_{\zeta} & k} 
\]
The bottom row is an extension representing $[\eta]\in\Ext^1_{kG}(k,L_\zeta)$, so we are done by Proposition~\ref{triproductlemma}.
\end{proof}

\bibliographystyle{plain}

%\bibliography{../meinbib}
\bigskip

\end{document}